\renewcommand{\eqref}[1]{\hyperref[#1]{(\ref{#1})}}
\newlist{enumlist}{enumerate}{2}
\setlist[enumlist,1]{labelindent=0cm,label=\arabic*.,ref=\arabic*,labelwidth=2.5ex,labelsep=0.5ex,leftmargin=3ex,align=left,topsep=0.5ex,itemsep=1ex,parsep=1ex}
\setlist[enumlist,2]{labelindent=0cm,label=\theenumlisti.\arabic*.,ref=\arabic*,labelwidth=5ex,labelsep=0.5ex,leftmargin=5.5ex,align=left,topsep=0.5ex,itemsep=1ex,parsep=1ex}
\newlist{itemlist}{itemize}{1}
\setlist[itemlist]{labelindent=0cm,label=$\bullet$,labelwidth=2.5ex,labelsep=0.5ex,leftmargin=3ex,align=left,topsep=0.5ex,itemsep=1ex,parsep=1ex}
\numberwithin{equation}{section}
\theoremstyle{definition}\newtheorem{definition}{Definition}[section]
\newtheorem{example}[definition]{Example}}
\newtheorem{proposition}[definition]{Proposition}
\newtheorem{lemma}[definition]{Lemma}
\newtheorem{theorem}[definition]{Theorem}
\newtheorem{corollary}[definition]{Corollary}
\newtheorem{letterthm}{Theorem}
\newtheorem{letterprop}[letterthm]{Proposition}
\newcommand{\core}{\operatorname{c}}
\newcommand{\om}{\omega}
\newcommand{\si}{\sigma}
\newcommand{\actson}{\curvearrowright}
\newcommand{\R}{\mathbb{R}}
\newcommand{\N}{\mathbb{N}}
\newcommand{\Stab}{\operatorname{Stab}}
\newcommand{\cZ}{\mathcal{Z}}
\newcommand{\cF}{\mathcal{F}}
\newcommand{\cP}{\mathcal{P}}
\newcommand{\eps}{\varepsilon}
\newcommand{\class}{\operatorname{class}}
\newcommand{\Ad}{\operatorname{Ad}}
\newcommand{\cU}{\mathcal{U}}
\newcommand{\ot}{\otimes}
\newcommand{\vphi}{\varphi}
\newcommand{\mutil}{\widetilde{\mu}}
\newcommand{\gammatil}{\widetilde{\gamma}}
\newcommand{\cR}{\mathcal{R}}
\newcommand{\cG}{\mathcal{G}}
\newcommand{\Tr}{\operatorname{Tr}}
\newcommand{\Z}{\mathbb{Z}}
\newcommand{\stc}{\operatorname{stc}}
\newcommand{\dpr}{^{\prime\prime}}
\newcommand{\al}{\alpha}
\newcommand{\be}{\beta}
\newcommand{\sigmatil}{\widetilde{\sigma}}
\newcommand{\F}{\mathbb{F}}
\newcommand{\cS}{\mathcal{S}}
\newcommand{\C}{\mathbb{C}}
\newcommand{\cN}{\mathcal{N}}
\newcommand{\Norm}{\operatorname{Norm}}
\newcommand{\ovt}{\mathbin{\overline{\otimes}}}
\newcommand{\sdif}{\vartriangle}
\newcommand{\Aut}{\operatorname{Aut}}
\begin{document}

\begin{center}
{\boldmath\LARGE\bf Classification results for nonsingular\vspace{0.5ex}\\ Bernoulli crossed products}

\bigskip

{\sc by Stefaan Vaes\footnote{\noindent KU~Leuven, Department of Mathematics, Leuven (Belgium).\\ E-mails: stefaan.vaes@kuleuven.be and bram.verjans@kuleuven.be.}\textsuperscript{,}\footnote{S.V.\ is supported by FWO research project G090420N of the Research Foundation Flanders and by long term structural funding~-- Methusalem grant of the Flemish Government.} and Bram Verjans\textsuperscript{1,}\footnote{B.V.\ is holder of PhD grant 1139721N fundamental research of the Research Foundation~-- Flanders.}}

\end{center}

\begin{abstract}
\noindent We prove rigidity and classification results for type III factors given by nonsingular Bernoulli actions of the free groups and more general free product groups. This includes a large family of nonisomorphic Bernoulli crossed products of type III$_1$ that cannot be distinguished by Connes $\tau$-invariant. These are the first such classification results beyond the well studied probability measure preserving case.
\end{abstract}

\section{Introduction}

In the past years, Popa's deformation/rigidity theory has led to a broad range of rigidity theorems for probability measure preserving (pmp) Bernoulli actions $\Gamma \actson (X,\mu) = (X_0,\mu_0)^\Gamma$, see e.g.\ \cite{Pop03,Pop04,Pop06,Ioa10,IPV10,PV21}. This includes numerous W$^*$-superrigidity results showing that both the group $\Gamma$ and its action $\Gamma \actson (X,\mu)$ can be entirely retrieved from the ambient II$_1$ factor $L^\infty(X) \rtimes \Gamma$.

More recently, there has been a growing interest in \emph{nonsingular} Bernoulli actions $\Gamma \actson (X,\mu) = \prod_{g \in \Gamma} (X_0,\mu_g)$, where the base measures $\mu_g$ vary. For $\Gamma = \Z$, this provides under the appropriate assumptions a classical family of nonsingular ergodic transformations that have been widely studied, see e.g.\ \cite{Ham81,Kos09,BKV19}. For nonamenable groups $\Gamma$, a first systematic study of nonsingular Bernoulli actions was made in \cite{VW17}. In view of the wealth of rigidity theorems for pmp Bernoulli actions, this raises the natural problem to prove rigidity and classification theorems for the type III factors $L^\infty(X,\mu) \rtimes \Gamma$ associated with nonsingular Bernoulli actions.

There is a conceptual reason why obtaining such rigidity theorems is a hard problem. It was proven in \cite[Theorem 3.1]{VW17} that if a nonamenable group $\Gamma$ admits a nonsingular Bernoulli action of type III, then $\Gamma$ must have a nonzero first $L^2$-Betti number. In the pmp setting, all superrigidity theorems for Bernoulli actions are restricted to nonamenable groups with zero first $L^2$-Betti number! It has even been conjectured that a pmp Bernoulli action satisfies cocycle superrigidity (w.r.t.\ the appropriate target groups) if and only if $\Gamma$ is nonamenable with zero first $L^2$-Betti number.

Therefore, we only set out to prove strong rigidity theorems, providing partial classification results for natural families of type III Bernoulli crossed products $L^\infty(X,\mu) \rtimes \Gamma$. These are the first classification results for type III Bernoulli crossed products going beyond Connes $\tau$-invariant to distinguish between such type III factors. We specifically prove these results for the wide family of nonsingular Bernoulli actions of the free groups $\F_n$ that were introduced in \cite[Section 7]{VW17} and that we recall below. More generally, we consider such Bernoulli actions for arbitrary free product groups $\Gamma = \Z \ast \Lambda$ with $\Lambda$ being any nonamenable group.

Recall from \cite[Section 7]{VW17} that, given any standard Borel space $Y$ and equivalent probability measures $\nu \sim \eta$ on $Y$, and given any countably infinite group $\Lambda$, we can consider the nonsingular Bernoulli action of the free product $\Gamma = \Z \ast \Lambda$ given by
\begin{equation}\label{eq.my-bernoulli}
\begin{split}
\Gamma \actson (X,\mu) = \prod_{h \in \Gamma} (Y,\mu_h) &\;\;\text{where}\;\; \mu_h = \begin{cases} \nu &\;\text{if the last letter of $h$ belongs to $\N \subset \Z$,}\\
\eta &\;\text{otherwise,}\end{cases}\\
&\;\;\text{and}\;\; (g \cdot x)_h = x_{g^{-1} h} \; .
\end{split}
\end{equation}
We always assume that $\nu$ and $\eta$ are not concentrated on a single atom, because otherwise $(X,\mu)$ consists of a single point. By \cite[Proposition 7.1]{VW17}, the action $\Gamma \actson (X,\mu)$ is essentially free and ergodic, of type III whenever $\nu \neq \eta$. By \cite[Section 7]{VW17}, this family of Bernoulli actions is rich: the crossed products can be of any possible type III$_\lambda$, $\lambda \in (0,1]$, and they can have basically any possible Connes $\tau$-invariant (in the sense of \cite{Con74}).

The main goal of this paper is to prove classification results for the crossed product factors $M = L^\infty(X) \rtimes \Gamma$ given by \eqref{eq.my-bernoulli}. We associate a measure class on the real line to the factor $M$ and prove that it is an isomorphism invariant for this family of type III factors. To formulate this first main result, we introduce some notation.

For every measure class $\mu$ on $\R$, we denote by $\mutil$ the measure class defined by $\mutil(\cU) = 0$ iff $\mu(-\cU) = 0$. We say that a measure class $\mu$ on $\R$ is \emph{stable} if $\delta_0 \prec \mu$, $\mutil \sim \mu$ and $\mu \ast \mu \sim \mu$. For every measure class $\mu$ on $\R$, there is a smallest stable measure class $\gamma$ such that $\mu \prec \gamma$. We denote this as $\gamma = \stc(\mu)$. This measure class $\gamma$ can be defined as the join of the measure classes $\mu^{\ast n} \ast \mutil^{\ast m}$, $n,m \geq 0$, where we use the convention that the $0$'th convolution power is $\delta_0$. Given equivalent probability measures $\nu \sim \eta$ on a standard Borel space $Y$, we consider the stable measure class
\begin{equation}\label{eq.measure-class-gamma}
\gamma = \stc\bigl((\log d\nu / d\eta)_*(\nu)\bigr) \; .
\end{equation}

\begin{letterthm}\label{thm.main}
For $i = 1,2$, let $\Lambda_i$ be a nonamenable group and let $\nu_i \sim \eta_i$ be equivalent probability measures on standard Borel spaces $Y_i$. Consider the nonsingular Bernoulli actions of $\Gamma_i = \Z \ast \Lambda_i$ on $(X_i,\mu_i)$ given by \eqref{eq.my-bernoulli}. Denote by $M_i$ their crossed product von Neumann algebras and let $\gamma_i = \stc\bigl((\log d\nu_i / d\eta_i)_*(\nu_i)\bigr)$ be the stable measure class defined by \eqref{eq.measure-class-gamma}.

If $M_1 \cong M_2$, then $\gamma_1 \sim \gamma_2$.
\end{letterthm}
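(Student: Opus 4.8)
The measure class $\gamma_i$ depends only on the Radon--Nikodym cocycle of the orbit equivalence relation $\cR_i$ of $\Gamma_i \actson (X_i,\mu_i)$, so the plan is to show, in three steps, that this relation (with its cocycle) can be recovered from $M_i$, that the relevant stabilized measure class is intrinsic to $\cR_i$, and that it equals $\gamma_i$. Concretely, for a nonsingular equivalence relation $\cR$ on a standard probability space $(Z,\kappa)$ with Radon--Nikodym cocycle $\delta\colon\cR\to\R$, let $\mathrm{m}(\cR)$ be the push-forward by $\delta$ of the canonical (infinite) measure class on $\cR$, and put $\mathrm{inv}(\cR)=\stc\bigl(\mathrm{m}(\cR)\bigr)$. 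A preliminary lemma is that $\mathrm{inv}(\cR)$ does not depend on the choice of quasi-invariant probability measure $\kappa$ in its class and is invariant under isomorphism and amplification of $\cR$: replacing $\kappa$ by an equivalent quasi-invariant $\kappa'$ changes $\delta$ by a coboundary $(x,y)\mapsto b(y)-b(x)$ with $b=\log d\kappa'/d\kappa$, and after composing cocycle values along finite $\cR$-paths and passing to the stabilization $\stc$ this perturbation is absorbed; the same telescoping handles isomorphisms of $\cR$, while amplification leaves $\delta$ unchanged. Via the Feldman--Moore correspondence, $\mathrm{inv}$ is therefore an invariant of the Cartan pair $\bigl(M,L^\infty(X)\bigr)$.

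The first main step is to prove that $L^\infty(X_i)\subset M_i$ is the unique Cartan subalgebra of $M_i$ up to unitary conjugacy. The crossed product $M_i=L^\infty(X_i)\rtimes\Gamma_i$ carries Popa's s-malleable deformation coming from the Bernoulli structure, and $\Gamma_i=\Z\ast\Lambda_i$ acts on its Bass--Serre tree with amenable stabilizers on the $\Z$-side, which provides the relative weak-amenability / bi-exactness input needed to run the deformation/rigidity dichotomy à la \cite{Ioa10,IPV10,PV21}, in the type~III state-preserving form (working inside the continuous core $\core(M_i)$ and replacing the tracial weak-compactness argument by its type~III analogue). This dichotomy shows that any Cartan subalgebra of $M_i$ is either unitarily conjugate to $L^\infty(X_i)$ or can be conjugated into a corner of $L^\infty(X_i)\rtimes g\Lambda_i g^{-1}$ for some $g\in\Gamma_i$; the nonamenability of $\Lambda_i$ is what rules out the second alternative, since it would force $M_i$ to be amenable relative to $L^\infty(X_i)\rtimes g\Lambda_i g^{-1}$ while being generated by the normalizer of a Cartan, contradicting nonamenability of $\Lambda_i$ inside $\Gamma_i$ together with the fact that the $\Lambda_i$-action on $(X_i,\mu_i)$ is measure preserving (so the $\Z$-direction genuinely carries the whole type~III phenomenon). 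Consequently, an isomorphism $M_1\cong M_2$ can, after a unitary, be arranged to send $L^\infty(X_1)$ onto $L^\infty(X_2)$, whence $\cR_1$ and $\cR_2$ are stably isomorphic as nonsingular equivalence relations and, by the lemma above, $\mathrm{inv}(\cR_1)=\mathrm{inv}(\cR_2)$. This step is the main obstacle: it requires pushing Popa's malleable-deformation and Cartan-rigidity technology for free product groups into the nonsingular type~III setting, with all the attendant bookkeeping on the core and without a trace.

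It remains to identify $\mathrm{inv}(\cR_i)$ with $\gamma_i$, which is a direct computation. Fix $g\in\Gamma_i$ and set $B_g=\{h\in\Gamma_i:\mu_{i,g^{-1}h}\neq\mu_{i,h}\}$; since the action is nonsingular and $\nu_i\neq\eta_i$ (the pmp case being trivial, as then $\gamma_i$ is the class of $\delta_0$), $B_g$ is finite, and on the graph of $g$ the cocycle equals $x\mapsto\sum_{h\in B_g}\log\frac{d\mu_{i,g^{-1}h}}{d\mu_{i,h}}(x_h)$, a sum of independent terms each distributed, under $\mu_i$, either as $(\log d\nu_i/d\eta_i)_*\eta_i$ or as $-(\log d\nu_i/d\eta_i)_*\nu_i$. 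Because $(\log d\nu_i/d\eta_i)_*\eta_i\sim(\log d\nu_i/d\eta_i)_*\nu_i$ (on $\{\log d\nu_i/d\eta_i\in A\}$ the two base measures differ by a bounded factor for bounded $A$, then exhaust), this distribution has measure class $\mu_i^{\ast a_g}\ast\mutil_i^{\ast b_g}$ for some $a_g,b_g\geq 0$, where $\mu_i$ denotes the class of $(\log d\nu_i/d\eta_i)_*(\nu_i)$. Taking the join over $g\in\Gamma_i$ gives $\mu_i\prec\mathrm{m}(\cR_i)\prec\bigvee_{a,b\geq 0}\mu_i^{\ast a}\ast\mutil_i^{\ast b}=\gamma_i$ (the lower bound already from $g$ a generator of $\Z$). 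Applying $\stc$ and using that $\gamma_i=\stc(\mu_i)$ is stable yields $\gamma_i=\stc(\mu_i)\prec\mathrm{inv}(\cR_i)\prec\stc(\gamma_i)=\gamma_i$, so $\mathrm{inv}(\cR_i)\sim\gamma_i$. Combined with $\mathrm{inv}(\cR_1)=\mathrm{inv}(\cR_2)$ from the previous step, this gives $\gamma_1\sim\gamma_2$.
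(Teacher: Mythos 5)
Your strategy (unique Cartan subalgebra $\Rightarrow$ stable orbit equivalence $\Rightarrow$ an OE-invariant built from the Radon--Nikodym cocycle) is genuinely different from the paper's, but it has two gaps, one of which is fatal even if the other were repaired. The fatal one is your preliminary lemma: $\stc$ of the push-forward of the Radon--Nikodym cocycle is \emph{not} independent of the choice of quasi-invariant probability measure in the class. Replacing $\kappa$ by $\kappa'\sim\kappa$ perturbs $\delta$ by the coboundary $(x,y)\mapsto b(y)-b(x)$, and this is not ``absorbed'' by $\stc$: telescoping along $\cR$-paths still leaves the endpoint contribution $b(y)-b(x)$, whose distribution is unrelated to $\stc(\delta_*(\text{canonical measure}))$. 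A minimal counterexample is a type II$_1$ relation ($\delta\equiv 0$, so your invariant is $\delta_0$) viewed with an equivalent non-invariant measure, for which the perturbed cocycle has distribution $\be_*\ast\widetilde{\be_*}$ for an essentially arbitrary $\be$. The paper's own Example \ref{ex.not-solid} exhibits exactly this phenomenon for Bernoulli actions: two equivalent product measures $\mu\sim\mu'$ on the same orbit equivalence relation with $\class(\vphi_\mu)\not\sim\class(\vphi_{\mu'})$, both classes already stable. Consequently, even granting unique Cartan, a stable orbit equivalence between $\cR_1$ and $\cR_2$ need not be measure preserving, and your third step does not let you conclude $\gamma_1\sim\gamma_2$. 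The quantity $\gamma$ is an invariant of the pair (algebra, canonical state), not of the orbit equivalence relation via your $\mathrm{inv}(\cR)$; recovering it is precisely the content of the theorem and cannot be obtained by declaring it an OE-invariant.

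The first step is also not available. For $\Gamma=\Z\ast\Lambda$ with $\Lambda$ an \emph{arbitrary} nonamenable group, $\Gamma$ need not be weakly amenable (take $\Lambda=SL_3(\Z)$), so the Popa--Vaes/Houdayer--Vaes unique Cartan machinery does not apply, and the references you invoke (\cite{Ioa10,IPV10,PV21}) prove superrigidity statements for other classes of groups, not Cartan uniqueness for free products acting nonsingularly. The paper deliberately sidesteps Cartan rigidity: it uses the malleable deformation only to intertwine $L_\vphi(\R)$ into $L_\om(\R)$ inside the continuous core (Proposition \ref{prop.main-technical}), i.e., it intrinsically characterizes the canonical state $\vphi_\mu$ through the nonamenability of its centralizer (the ``solidity method'' of \cite{HSV16}, via Lemmas \ref{lem.HSV} and \ref{lem.class}), and then reads off $\gamma$ as the spectral measure class of $\log\Delta_{\vphi_\mu}$. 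Your concluding computation identifying $\stc$ of the cocycle distribution with $\gamma_i$ is correct and parallels the paper's identification $\class(\vphi_i)=\stc(\al_i)$, but the two preceding steps do not support it.
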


We also analyze which conclusions can be drawn if $M_1$ merely embeds with expectation into $M_2$, meaning that there exists a faithful normal $*$-homomorphism $\pi : M_1 \to M_2$ and a faithful normal conditional expectation of $M_2$ onto $\pi(M_1)$. The following then provides large families of Bernoulli crossed products for which such embeddings with expectation do not exist. While a systematic study of embeddability between Bernoulli crossed products has been made in \cite{PV21} in the probability measure preserving type II$_1$ setting, our Theorem \ref{thm.main2} is the first such systematic nonembeddability result in the type III case.

To formulate this result, we provide the following canonical class of examples of \eqref{eq.my-bernoulli}. Define the set $\cP$ of Borel probability measures on $\R$ by
\begin{equation}\label{eq.setP}
\cP = \bigl\{ \nu \bigm| \nu \;\;\text{is a Borel probability measure on $\R$ with}\;\; \int_\R \exp(-x) \, d\nu(x) < +\infty \bigr\} \; .
\end{equation}
Given $\nu \in \cP$, there is a unique probability measure $\eta$ on $\R$ given by normalizing $\exp(-x) \, d\nu(x)$. By construction, the measure $(\log d\nu / d\eta)_*(\nu)$ is a translate of $\nu$. Then, \eqref{eq.my-bernoulli} provides a nonsingular Bernoulli action for any free product $\Z \ast \Lambda$, with base space $Y = \R$.

Recall that a Borel set $K \subset \R$ is called independent if every set of $n$ distinct elements of $K$ generates a free abelian subgroup of $\R$ of rank $n$.

\begin{letterthm}\label{thm.main2}
Let $K \subset \R$ be an independent Borel set. For $i = 1,2$, let $\Lambda_i$ be nonamenable groups, put $\Gamma_i = \Z \ast \Lambda_i$ and let $\nu_i \in \cP$ be nonatomic measures supported on $K$. Consider the associated nonsingular Bernoulli actions with crossed product von Neumann algebra $M_i$.

If $M_1$ embeds with expectation into $M_2$, then $\nu_1 \prec \nu_2$. In particular, if $M_1 \cong M_2$, then $\nu_1 \sim \nu_2$.
\end{letterthm}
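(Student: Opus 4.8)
The plan is to combine a one-sided, embedding version of Theorem \ref{thm.main} with a combinatorial argument that uses the independence of $K$ to recover $\nu_i$ from the stable measure class $\gamma_i$. So the first step is to prove: \emph{if $M_1$ embeds with expectation into $M_2$, then $\gamma_1 \prec \gamma_2$.} This should follow by running the deformation/rigidity proof of Theorem \ref{thm.main} in a single direction -- the symmetric conclusion $\gamma_1 \sim \gamma_2$ there is really obtained by proving $\prec$ twice, and an embedding with expectation provides exactly the input needed for one of the two runs. Granting this, write $\rho_i := (\log d\nu_i/d\eta_i)_*(\nu_i)$, which by construction of $\eta_i$ is a translate $\rho_i = \delta_{c_i} \ast \nu_i$ of $\nu_i$ for some $c_i \in \R$, and $\gamma_i = \stc(\rho_i)$; what remains is the purely measure-theoretic implication $\gamma_1 \prec \gamma_2 \Rightarrow \nu_1 \prec \nu_2$.

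The key is to kill the translations $c_i$. Since $\gamma_1$ is stable it dominates both $\rho_1$ and $\widetilde{\rho_1}$, so $\rho_1 \prec \gamma_2$ and $\widetilde{\rho_1} \prec \gamma_2$; convolving these and using $\gamma_2 \ast \gamma_2 \sim \gamma_2$ gives $\rho_1 \ast \widetilde{\rho_1} \prec \gamma_2$. But $\rho_1 \ast \widetilde{\rho_1} = (\delta_{c_1} \ast \nu_1) \ast (\delta_{-c_1} \ast \widetilde{\nu_1}) = \nu_1 \ast \widetilde{\nu_1}$, so $\nu_1 \ast \widetilde{\nu_1} \prec \gamma_2$. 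Now identify the subgroup $\langle K \rangle \subset \R$ with the free abelian group $\bigoplus_{k \in K} \Z$ (here independence of $K$ is exactly what is needed), so that $\nu_i$ is carried by the standard basis $\{e_k : k \in K\}$, the measure $\nu_1 \ast \widetilde{\nu_1}$ is carried by $D := \{e_a - e_b : a,b \in K,\ a \neq b\}$, and the unshifted factor $\nu_2^{\ast n} \ast \widetilde{\nu_2}^{\ast m}$ of the $(n,m)$-term $\delta_{(n-m)c_2} \ast \nu_2^{\ast n} \ast \widetilde{\nu_2}^{\ast m}$ of $\gamma_2 = \stc(\rho_2)$ is carried, up to a $\nu_2$-null set of coincidences, by $\{0,\pm 1\}$-vectors with $n$ coordinates $+1$ and $m$ coordinates $-1$ at distinct positions. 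Restricting the join defining $\gamma_2$ to $D$ and doing the bookkeeping -- matching coordinate sums to control the shifts $\delta_{(n-m)c_2}$, and using nonatomicity of $\nu_2$ to discard terms that would force a summation variable to equal a fixed element of $K$ -- one finds that the only term contributing an absolutely continuous piece over $D$ is $(n,m) = (1,1)$, equal to $\nu_2 \ast \widetilde{\nu_2}$, with the single exception that if $c_2 \in K$ (resp.\ $c_2 \in -K$) the term $(0,1)$ (resp.\ $(1,0)$) also survives, contributing $\delta_{c_2} \ast \widetilde{\nu_2}$ (resp.\ $\delta_{c_2} \ast \nu_2$). Each such exceptional piece is carried by a set $\{b - k : k \in K\}$ or $\{k - b : k \in K\}$ for a fixed $b \in K$, hence -- since $\nu_1$ is nonatomic and $K$ is independent -- by a $\nu_1 \ast \widetilde{\nu_1}$-null set; so it contributes nothing to the part of the join seen by $\nu_1 \ast \widetilde{\nu_1}$, and we conclude $\nu_1 \ast \widetilde{\nu_1} \prec \nu_2 \ast \widetilde{\nu_2}$.

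Finally I descend from $\nu_1 \ast \widetilde{\nu_1} \prec \nu_2 \ast \widetilde{\nu_2}$ to $\nu_1 \prec \nu_2$. Since $K$ is independent, the subtraction map $s(x,y) = x - y$ is injective on $(K \times K) \setminus \Delta$: an equality $x - y = x' - y'$ of off-diagonal points would be a nontrivial $\Z$-linear relation among at most four elements of $K$. As $\nu_i$ is nonatomic, $\Delta$ is $\nu_i \times \nu_i$-null and $\{0\}$ is $\nu_i \ast \widetilde{\nu_i}$-null, so $s$ induces a measure-class isomorphism between $\nu_i \times \nu_i$ and $\nu_i \ast \widetilde{\nu_i}$ (the latter regarded on $D$). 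Pulling $\nu_1 \ast \widetilde{\nu_1} \prec \nu_2 \ast \widetilde{\nu_2}$ back through $s$ yields $\nu_1 \times \nu_1 \prec \nu_2 \times \nu_2$; if some Borel set $A$ had $\nu_1(A) > 0 = \nu_2(A)$ then $A \times A$ would contradict this, so $\nu_1 \prec \nu_2$. The \emph{in particular} then follows: if $M_1 \cong M_2$, an isomorphism and its inverse are embeddings with expectation in both directions, whence $\nu_1 \prec \nu_2$ and $\nu_2 \prec \nu_1$, i.e.\ $\nu_1 \sim \nu_2$.

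I expect the main obstacle to lie in the first step -- carrying the deformation/rigidity proof of Theorem \ref{thm.main} through in the asymmetric situation of an embedding with expectation (re-doing the intertwining, the control of the malleable deformation of the Bernoulli action, and the reconstruction of the Radon--Nikodym cocycle without the luxury of an isomorphism). Within the measure-theoretic part, the delicate point is precisely the handling of the translation constants $c_i$: ensuring that the shifted convolution terms $\delta_{(n-m)c_2} \ast \nu_2^{\ast n} \ast \widetilde{\nu_2}^{\ast m}$ do not leak spurious mass over $D$ that is invisible to $\nu_2 \ast \widetilde{\nu_2}$ -- which is what the nonatomicity of $\nu_1$, forcing $\nu_1 \ast \widetilde{\nu_1}$ to charge only generic differences of elements of $K$, is used to rule out.
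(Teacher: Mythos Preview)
Your first step is where the real problem lies, and it is not the kind of obstacle you anticipate. It is \emph{not} true that the proof of Theorem~\ref{thm.main} yields $\gamma_1 \prec \gamma_2$ from a mere embedding with expectation by ``running one direction''. In the paper, the one-sided inclusion $\stc(\al_1)\prec\stc(\al_2)$ in the isomorphism case is obtained via Lemma~\ref{lem.class}, and that lemma needs the unitaries $u_g$, $g\in\cG_2$, to satisfy $u_g^*\si^{\om_2}_t(u_g)\in \pi(M_1)$. These Radon--Nikodym cocycles lie in $L^\infty(X_2)\subset M_2$, so the argument only goes through because $\pi(M_1)=M_2$. For an embedding $\pi(M_1)\subsetneq M_2$ this fails, and one is forced back to Proposition~\ref{prop.class}.1, which only gives
\[
\gamma_1 \;\prec\; \rho \ast \gamma_2 \qquad\text{for some atomic probability measure $\rho$ on $\R$}\,,
\]
exactly as stated in Theorem~\ref{thm.main3}.2. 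There is no evident reason why the atomic perturbation $\rho$ could be removed at this stage.

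The good news is that your combinatorial strategy survives this weakening. One still gets $\nu_1\ast\widetilde{\nu_1}\prec \rho\ast\gamma_2$, and restricting to $D=(K-K)\setminus\{0\}$ now produces, for each atom $x$ of $\rho$ and each $(n,m)$, a term $\delta_{x+(n-m)c_2}\ast\nu_2^{\ast n}\ast\widetilde{\nu_2}^{\ast m}$. The same bookkeeping shows that, up to countably many ``exceptional'' pieces supported on sets of the form $p-K$ or $K-p$ for fixed $p\in K$ (all $\nu_1\ast\widetilde{\nu_1}$-null by your own argument), only the contribution $\nu_2\ast\widetilde{\nu_2}$ remains, and your descent $\nu_1\ast\widetilde{\nu_1}\prec\nu_2\ast\widetilde{\nu_2}\Rightarrow\nu_1\prec\nu_2$ via the injectivity of $(a,b)\mapsto a-b$ off the diagonal is correct. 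So the fix is not conceptual, only organizational: carry the atomic $\rho$ through the whole computation rather than trying to eliminate it up front.

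For comparison, the paper handles the atomic perturbation differently and more economically. Instead of restricting to $K-K$, it introduces the operation $\pi_K$ that records the nonatomic mass a measure places on translates $x+K$ (Lemma~\ref{lem.indep-K}). The point of this construction is precisely Lemma~\ref{lem.indep-K}.3: $\pi_K(\rho\ast\stc(\mu))\sim\pi_K(\mu_c)\vee\pi_K(\mutil_c)$ for \emph{any} atomic $\rho$, so the perturbation disappears automatically. Combined with Lemma~\ref{lem.indep-K}.4, which for $\mu$ supported on $K$ gives $\pi_K(\delta_x\ast\mu)\sim\mu_c$ and $\pi_K(\delta_x\ast\mutil)=0$, one reads off $\nu_1\prec\nu_2$ in a single line. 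Your route via $K-K$ and $\nu\ast\widetilde{\nu}$ is a legitimate alternative, but it requires you to redo by hand the case analysis that $\pi_K$ packages once and for all.
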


So, Theorem \ref{thm.main2} provides large classes of nonsingular Bernoulli crossed products that cannot be embedded with expectation one into the other. Moreover, the conclusions of Theorem \ref{thm.main2} hold for further classes of probability measures $\nu_i \in \cP$, see Corollary \ref{cor.Bernoulli-independent-set} below. In Example \ref{exam.full-with-standard-tau}, we use this result to provide mutually nonembeddable type~III Bernoulli crossed products that cannot be distinguished by invariants from modular theory, like Connes $\tau$-invariant.

We next focus on solidity of nonsingular Bernoulli crossed products. Recall from \cite{Oza03} that a II$_1$ factor $M$ is called \emph{solid} if $A' \cap M$ is amenable for every diffuse von Neumann subalgebra $A \subset M$. In \cite{Oza03}, it is proven that the group von Neumann algebra $L(\Gamma)$ is solid for every word hyperbolic group $\Gamma$ and, more conceptually, for every \emph{biexact} countable group $\Gamma$ (see \cite[Chapter 15]{BO08}).
An arbitrary diffuse von Neumann algebra $M$ is called solid if $A' \cap M$ is amenable for every diffuse von Neumann subalgebra $A \subset M$ that is the range of a faithful normal conditional expectation (see \cite{VV05}).
One of the striking features of solid factors is that they are prime: they do not admit nontrivial tensor product decompositions. Also all their nonamenable subfactors with expectation are prime.

Solidity has a counterpart in ergodic theory, as discovered in \cite{CI08}: an essentially free nonsingular action of a countable group $\Gamma$ on a standard nonatomic probability space $(X,\mu)$ is said to be a \emph{solid action} if for every subequivalence relation $\cS$ of the orbit equivalence relation $\cR(\Gamma \actson X)$, there exists a partition of $(X,\mu)$ into $\cS$-invariant Borel sets $(X_n)_{n \geq 0}$ such that $\cS|_{X_0}$ is amenable and $\cS|_{X_n}$ is ergodic for every $n \geq 1$. Note that $\Gamma \actson (X,\mu)$ is a solid action if and only if for every diffuse von Neumann subalgebra $A \subset L^\infty(X)$, the relative commutant $A' \cap L^\infty(X) \rtimes \Gamma$ is amenable. In \cite{CI08}, it is proven that all pmp Bernoulli actions $\Gamma \actson (X_0,\mu_0)^\Gamma$ are solid actions.

It remains an open question whether all nonsingular Bernoulli actions $\Gamma \actson \prod_{h \in \Gamma} (X_0,\mu_h)$ are solid actions. In \cite{HIK20}, it is proven that this is indeed the case when $X_0 = \{0,1\}$ consists of two points and when the probability measures $(\mu_h)_{h \in \Gamma}$ have a stronger almost invariance property: for all $g \in \Gamma$, we have that $\mu_{gh} = \mu_h$ for all but finitely many $h \in \Gamma$. We prove that all nonsingular Bernoulli actions in \eqref{eq.my-bernoulli} are solid actions. Note that our family mainly consists of Bernoulli actions with a diffuse base space, thus complementing the results of \cite{HIK20}.

Actually, our method to prove Theorems \ref{thm.main} and \ref{thm.main2} is a ``solidity method'' that was introduced in \cite{HSV16}. In \cite{HSV16}, it was proven that any faithful normal state $\psi$ on a free Araki-Woods factor $M$ with the property that the centralizer $M^\psi$ is nonamenable, must have a corner that is unitarily conjugate to a corner of the canonical free quasi-free state $\vphi$ on $M$. So in cases where the centralizer of the free quasi-free state $\vphi$ is a nonamenable II$_1$ factor, we can characterize $\vphi$ as the essentially unique state on $M$ having a nonamenable centralizer. As a consequence, the spectral measure class of the modular operator $\Delta_\vphi$ becomes an invariant of such von Neumann algebras $M$.

We thus introduce the following terminology: we say that a faithful normal state $\vphi$ on a von Neumann algebra $M$ is a \emph{solid state} if every faithful normal state $\psi$ on $M$ with a nonamenable centralizer $M^\psi$ has a corner that is unitarily conjugate to a corner of $\vphi$ (see Definition \ref{def.solid-state}). In particular, if $\vphi$ is a solid state on a type III factor and $M^\vphi$ is amenable, it follows that every faithful normal state on $M$ has an amenable centralizer. The main result of \cite{HSV16} can then be reformulated as saying that the free quasi-free state on a free Araki-Woods factor is a solid state.

For our nonsingular Bernoulli actions in \eqref{eq.my-bernoulli} with crossed product $M = L^\infty(X,\mu) \rtimes \Gamma$, it is in general not true that the crossed product state $\vphi_\mu$ is solid. Nevertheless, our proof of Theorems \ref{thm.main} and \ref{thm.main2} is based on carefully analyzing which states on $M$ have a nonamenable centralizer. Under extra assumptions, we do find that $\vphi_\mu$ is a solid state. Our solidity results can then be summarized as follows.

\begin{letterthm}\label{thm.main-solid}
Let $\Lambda$ be a nonamenable group and let $\nu \sim \eta$ be equivalent probability measures on a standard Borel space $Y$. Consider the nonsingular Bernoulli actions of $\Gamma = \Z \ast \Lambda$ on $(X,\mu)$ given by \eqref{eq.my-bernoulli}. Denote $M = L^\infty(X,\mu) \rtimes \Gamma$.
\begin{enumlist}
\item The nonsingular Bernoulli action $\Gamma \actson (X,\mu)$ is a solid action.
\item The factor $M$ is solid relative to $L(\Lambda)$ in the sense of \cite[Definition 3.2]{Mar16}.
\item If $\Lambda$ is biexact, then $M$ is solid.
\item If $\Lambda$ is biexact and $(\log d\nu/d\eta)_*(\nu)$ is nonatomic, then the crossed product state $\vphi_\mu$ on $M$ is a solid state.
\end{enumlist}
\end{letterthm}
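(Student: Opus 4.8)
\emph{Strategy.} The argument combines Popa's deformation/rigidity theory in the type~III setting, biexactness à la Ozawa, and the ``solid state'' technology of \cite{HSV16}. The starting point is a structural observation: although $\Gamma = \Z \ast \Lambda \actson (X,\mu)$ is genuinely nonsingular, its restriction to $\Lambda$ is \emph{measure preserving}. Indeed, for $g \in \Lambda$ and $h \in \Gamma$ one has $\mu_{gh} = \mu_h$, since left multiplication by an element of $\Lambda$ does not affect the rightmost syllable of a reduced word (the edge cases $h = e$ etc.\ producing $\eta = \eta$ anyway). Grouping the coordinates along the right cosets in $\Lambda \backslash \Gamma$, on each of which all base measures coincide and $\Lambda$ acts freely and transitively, one sees that $\Lambda \actson (X,\mu)$ is conjugate to a genuine pmp Bernoulli action of $\Lambda$; by \cite{CI08} it is therefore a solid action and $L^\infty(X) \rtimes \Lambda$ is solid relative to $L(\Lambda)$. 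A second ingredient is that $\Z \ast \Lambda$ is biexact relative to $\{\Lambda\}$ because $\Z$ is amenable, and is biexact outright when $\Lambda$ is biexact, since biexactness passes to free products \cite{Oza03,BO08}. I will freely pass between $M$, the continuous core $M \rtimes_{\si^{\vphi_\mu}} \R \cong L^\infty(X \times \R) \rtimes \Gamma$ on which $\Gamma$ acts by the Maharam extension, in a measure preserving way so that trace techniques apply, and the modular description of $\vphi_\mu$ in the sense of \cite{Mar16}.

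\emph{Parts (1) and (2).} For (1) I run the Chifan--Ioana argument for the $\Gamma$-Bernoulli action, adapted to the nonsingular case: on a dilation of the core I build the $s$-malleable deformation that rotates each tensor factor $L^\infty(Y,\mu_h)$ of the Bernoulli algebra, taking care that it is compatible with the Radon--Nikodym cocycle, hence descends to the core. Since no diffuse subalgebra of $L^\infty(X)$ can be intertwined into the group part $L(\Gamma)$ of a Bernoulli crossed product, Popa's transversality forces the deformation to converge uniformly on the unit ball of $A$, and then, by a standard commutant argument, on the unit ball of $A' \cap M$, for any diffuse $A \subseteq L^\infty(X)$; uniform convergence on the latter forces it to be amenable. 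For (2), relative biexactness of $\Gamma$ with respect to $\{\Lambda\}$ shows that $M$ is solid relative to $L^\infty(X) \rtimes \Lambda$, while the structural observation together with \cite{CI08} gives that $L^\infty(X) \rtimes \Lambda$ is solid relative to $L(\Lambda)$; a composition lemma for relative solidity, in the spirit of \cite{Mar16}, then yields (2).

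\emph{Parts (3) and (4).} For (3): when $\Lambda$ is biexact, $\Gamma = \Z \ast \Lambda$ is biexact, so by Ozawa's theorem applied through the core to the pmp $\Gamma$-action on $X \times \R$, the factor $M$ is solid relative to the base $L^\infty(X)$; combining this with (1) to handle subalgebras that intertwine into $L^\infty(X)$, and using that $L^\infty(X)$ is amenable so that ``amenable relative to $L^\infty(X)$'' means ``amenable'', every diffuse subalgebra with expectation has amenable relative commutant, i.e.\ $M$ is solid. For (4) I invoke the method of \cite{HSV16}: given a faithful normal state $\psi$ on $M$ with $M^\psi$ nonamenable, the deformation/rigidity and relative biexactness analysis underlying (1)--(2), now applied to the inclusion $M^\psi \subseteq M$ and using that $M$ is solid by (3), forces $M^\psi$ to intertwine, through the core, into a corner of the Bernoulli building block that carries $\vphi_\mu$; as in \cite{HSV16}, such an intertwiner can be turned into a unitary in $M$ conjugating a corner of $\psi$ onto a corner of $\vphi_\mu$, which is precisely the assertion that $\vphi_\mu$ is a solid state. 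The hypothesis that $(\log d\nu/d\eta)_*(\nu)$ is nonatomic enters only here: it guarantees that $\Delta_{\vphi_\mu}$ has trivial point spectrum, with no almost periodic part beyond the centralizer, which rules out a discrete family of mutually inequivalent states with nonamenable centralizer and pins $\psi$ down up to corner equivalence.

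\emph{Main obstacle.} The crux is running deformation/rigidity in the genuinely type~III setting: constructing an $s$-malleable deformation of the nonsingular Bernoulli action that is compatible with the Radon--Nikodym cocycle, hence with the modular flow of $\vphi_\mu$, so that it lifts to the continuous core, and then carrying out the transversality and spectral gap estimates there while bookkeeping the free product and $\Lambda$-subgroup structure. For (4) there is the additional delicate step, inherited from \cite{HSV16}, of converting an abstract intertwining into an explicit unitary conjugacy of states and controlling the role of the eigenvalues of $\Delta_{\vphi_\mu}$.
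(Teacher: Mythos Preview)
Your approach diverges substantially from the paper's and has a genuine gap in part~(4).

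The paper's central device, absent from your sketch, is a state-preserving embedding of $M$ into a \emph{state-preserving} noncommutative generalized Bernoulli crossed product $(P,\om)^I \rtimes \Gamma$, with $P = L^\infty(Z) \rtimes \Z$, $I = \Gamma/\Z$, and $\om$ the crossed product state (Lemma~\ref{lem.canonical-embedding}). This sidesteps entirely the problem you flag as the ``main obstacle'': no deformation compatible with a Radon--Nikodym cocycle is ever built; one simply runs the standard tensor-length deformation and spectral-gap arguments (Theorem~\ref{thm.def-rig-nc-bernoulli}) in the ambient state-preserving algebra. Combined with the amalgamated free product decomposition $L_\vphi(\R) \vee L(\Gamma) \cong (L(\Z)\ovt L_\vphi(\R)) \ast_{L_\vphi(\R)} (L(\Lambda)\ovt L_\vphi(\R))$ and \cite{CH08}, this yields the dichotomy of Lemma~\ref{lem.solid-more-general}, from which (2) is proved directly; (3) follows from (2) because $L(\Lambda)$ is solid, and (1) follows from (2) because a diffuse $A \subset L^\infty(X)$ cannot intertwine into $L(\Lambda)$. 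Neither relative biexactness nor a ``composition lemma for relative solidity'' is used, and the latter is not a standard fact.

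For (4) your account of the nonatomic hypothesis is incorrect and misses the actual difficulty. The dichotomy of Lemma~\ref{lem.solid-more-general} only yields $\psi \prec \vphi_a$ for \emph{some} $a \in \Z^{(I)}$, where $\vphi_a$ is the crossed product state for the shifted product measure $\mu_a = \prod_i a_i \cdot \zeta$. These states are in general \emph{not} corner-conjugate to $\vphi = \vphi_e$ (this is exactly what Example~\ref{ex.not-solid} exhibits), so one must still prove $\vphi_a \prec \vphi$. The nonatomicity of $(\log d\nu/d\eta)_*(\nu)$ is used to ensure $d\mu_b/d\mu_c \neq 1$ a.e.\ for $b \neq c$, which forces $M^{\vphi_a} \subset L^\infty(X) \rtimes L$ for the subgroup $L = \{g \in \Gamma : c(g) = a\,\al_g(a^{-1})\}$, where $c : \Gamma \to \Z^{(I)}$ is an explicit $1$-cocycle. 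Since $M^{\vphi_a}$ is nonamenable, $L$ is nonamenable; a bound on the $\Z$-syllable length of elements of $L$ then forces $L$ into a conjugate of $\Lambda$, and ultimately $a = c(g_0)$ for some $g_0 \in \Gamma$, so that $\mu_a = g_0 \cdot \mu$ and $\vphi_a$ is unitarily conjugate to $\vphi$. This combinatorial cocycle argument is the heart of (4); ``trivial point spectrum of $\Delta_{\vphi_\mu}$'' is neither the hypothesis used nor the mechanism at work.
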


In \cite[Corollary 4.5]{Oza04}, it was proven that for every pmp Bernoulli action $\Gamma \actson (X,\mu) = (X_0,\mu_0)^\Gamma$ of a biexact group $\Gamma$, the crossed product $M = L^\infty(X,\mu) \rtimes \Gamma$ is solid. It is an open problem whether the same holds for arbitrary nonsingular Bernoulli actions of biexact groups. By \cite[Theorem C]{HV12}, this problem is equivalent to the open problem whether every nonsingular Bernoulli action of a biexact group is a solid action.

Since we expect that these open problems have a positive solution, it is tempting to believe that for any nonsingular Bernoulli action $\Gamma \actson (X,\mu)$ of a biexact group, the crossed product state $\vphi_\mu$ on $M = L^\infty(X,\mu) \rtimes \Gamma$ is a solid state. This is however not true, as we show in Example \ref{ex.not-solid}. In an attempt to give a more conceptual explanation for Theorem \ref{thm.main}, it is equally natural to try to prove the following statement: if $\Gamma \actson (X,\mu)$ is any nonsingular Bernoulli action with the property that the measure $\mu$ is $\Lambda$-invariant for a nonamenable subgroup $\Lambda < \Gamma$, then the spectral measure class of $\Delta_{\vphi_\mu}$ can be recovered as an invariant of $M$. But the same Example \ref{ex.not-solid} shows that also this statement is false. This explains why our Theorem \ref{thm.main} is restricted to the natural family of actions introduced in \eqref{eq.my-bernoulli}.

We finally prove the following partial converse to Theorem \ref{thm.main}.

\begin{letterprop}\label{prop.some-isomorphism}
Let $\Lambda$ be a countable group and put $\Gamma = \Z \ast \Lambda$. For $i=1,2$, let $\nu_i \sim \eta_i$ be equivalent probability measures on the standard Borel spaces $Y_i$. Denote by $\Gamma \actson^{\al_i} (X_i,\mu_i)$ the associated nonsingular Bernoulli actions given by \eqref{eq.my-bernoulli}. Denote $\sigma_i = (\log d\nu_i / d\eta_i)_*(\nu_i)$.

If $\sigma_1 = \sigma_2$ and if the maps $\log d\nu_i / d\eta_i : (Y_i,\nu_i) \to \R$ are not essentially one-to-one, then there exists a measure preserving conjugacy between the actions $\Gamma \actson^{\al_i} (X_i,\mu_i)$. In particular, the crossed product factors $M_i = L^\infty(X_i,\mu_i) \rtimes_{\al_i} \Gamma$ are isomorphic.
\end{letterprop}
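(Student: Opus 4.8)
The plan is to construct the conjugacy explicitly as a product map over the base spaces. Write $\rho_i = \log(d\nu_i/d\eta_i) : (Y_i,\nu_i)\to\R$, so that $\sigma_i = (\rho_i)_*\nu_i$ and $\eta_i = \exp(-\rho_i)\,\nu_i$; put $\sigma = \sigma_1 = \sigma_2$. \emph{Step 1 (reduction to the base space).} In \eqref{eq.my-bernoulli} the partition of $\Gamma$ into the coordinates carrying $\nu$ and those carrying $\eta$ depends only on $\Gamma$. Hence it suffices to find a Borel isomorphism $\theta : (Y_1,\nu_1)\to(Y_2,\nu_2)$ with $\rho_2\circ\theta = \rho_1$ $\nu_1$-a.e.: such a $\theta$ satisfies $\theta_*\nu_1 = \nu_2$ and $\theta_*\eta_1 = \theta_*(\exp(-\rho_1)\,\nu_1) = \exp(-\rho_2)\,\nu_2 = \eta_2$, so the product map $\Theta = \prod_{h\in\Gamma}\theta : X_1\to X_2$ is a Borel isomorphism with $\Theta_*\mu_1 = \mu_2$ that intertwines the two shift actions. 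This is the desired measure preserving conjugacy, and then $M_1\cong M_2$ follows at once.

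\emph{Step 2 (the base up to conjugacy depends only on $\sigma$).} Disintegrate $\nu_i = \int_\R \nu_{i,t}\,d\sigma(t)$ with $\nu_{i,t}$ a probability measure on the fibre $\rho_i^{-1}(t)$; since $d\eta_i/d\nu_i$ is a function of $\rho_i$, the conditional law of $\eta_i$ on $\rho_i^{-1}(t)$ is again $\nu_{i,t}$. A map $\theta$ as in Step 1 exists iff, for $\sigma$-a.e.\ $t$, the standard probability spaces $(\rho_1^{-1}(t),\nu_{1,t})$ and $(\rho_2^{-1}(t),\nu_{2,t})$ are isomorphic, with the fibrewise isomorphisms chosen measurably in $t$ (a routine Borel selection). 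In particular it is enough to show that each of the two Bernoulli actions is measure preservingly conjugate to the one built from the canonical base $(\R\times[0,1],\ \sigma\otimes\lambda,\ \pi_1)$ with $\lambda$ Lebesgue measure, all of whose fibres are nonatomic; equivalently, that one may replace each fibre measure $\nu_{i,t}$ by a nonatomic one of the same total mass without changing the conjugacy class of the action. This is where the hypothesis that $\rho_i$ is not essentially one-to-one is used: on a set of fibres of positive $\sigma$-measure it provides genuine room inside the base, and — exploiting the free product structure $\Gamma = \Z\ast\Lambda$ together with the fact that $\nu_i\neq\eta_i$, so that the action is of type III — this room can be spread over all fibres by an explicit conjugacy, absorbing a Lebesgue factor into each one.

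\emph{Main obstacle.} The real work, and the step I expect to be hardest, is this last base reduction: showing that the type III Bernoulli action over a base with arbitrary (possibly atomic) fibres is conjugate to the one over $(\R\times[0,1],\sigma\otimes\lambda,\pi_1)$. In the probability measure preserving regime $\nu_i = \eta_i$ (so $\sigma = \delta_0$) the analogous statement is false, since the entropy of the base is a conjugacy invariant; the argument must therefore genuinely exploit that the coordinates of \eqref{eq.my-bernoulli} carry two distinct measures, and must locate the absorbing Lebesgue factor precisely from the failure of essential injectivity of $\rho_i$. Once this is in place, Steps 1 and 2 deliver the measure preserving conjugacy between $\Gamma\actson^{\al_1}(X_1,\mu_1)$ and $\Gamma\actson^{\al_2}(X_2,\mu_2)$, and hence the isomorphism $M_1\cong M_2$.
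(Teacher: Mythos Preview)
Your outline correctly isolates the difficulty but leaves the main step unproven; moreover, the paper's route is genuinely different and shows why your proposed base-level reduction cannot work as stated.

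Your Step~1 asks for a Borel isomorphism $\theta:(Y_1,\nu_1)\to(Y_2,\nu_2)$ with $\rho_2\circ\theta=\rho_1$, and then the conjugacy would be the coordinatewise product map $\prod_h\theta$. But such a $\theta$ need not exist: in the paper's own Example~\ref{ex.some-isomorphism}, $Y_1$ is a three-point set and $Y_2=[0,1]$, so there is no Borel bijection between them at all. You notice this in Step~2 and propose to first replace each base by $(\R\times[0,1],\sigma\otimes\lambda)$, but this ``spreading the room over all fibres'' is precisely the content of the proposition, and your sketch (invoking type~III and the free product) does not give a mechanism. In fact the conjugacy the paper produces is \emph{not} of product form over $\Gamma$, so your Step~1 reduction is too restrictive.

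The paper avoids the obstacle by changing the level at which Maharam's theorem is applied. First it reduces, via the coinduced-action description \eqref{eq.more-general-nonsingular}, to finding a measure preserving conjugacy between the $\Z$-Bernoulli shifts $\Z\actson(Z_i,\zeta_i)=\prod_{n\in\Z}(Y_i,\mu_{i,n})$. The maps $\pi_i=\log d\nu_i/d\eta_i$ induce $\Z$-equivariant measure preserving factor maps $\psi_i:(Z_i,\zeta_i)\to(Z,\zeta)=\prod_{n\in\Z}(\R,\mu_n)$. The non-essential-injectivity hypothesis now gives, for free, that the fibre measures of $\psi_i$ are \emph{nonatomic} for $\zeta$-a.e.\ point: an infinite product of probability measures whose largest atom is $\leq 1-\eps$ on infinitely many factors is nonatomic. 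Maharam's classification of factor maps then yields a (non-equivariant) measure preserving $\theta:(Z_1,\zeta_1)\to(Z_2,\zeta_2)$ over $(Z,\zeta)$. The second key ingredient, replacing your vague appeal to type~III, is Lemma~\ref{lem.dissipative-Z}: since $\nu\neq\eta$, the $\Z$-action on $(Z,\zeta)$ is totally dissipative, so a Borel fundamental domain $U\subset Z$ exists; one then defines the $\Z$-equivariant $\Theta$ by $\Theta(n\cdot z)=n\cdot\theta(z)$ for $z\in\psi_1^{-1}(U)$ and checks from $\psi_2\circ\theta=\psi_1$ that $\Theta$ is measure preserving. Both ideas---passing to the infinite product to force nonatomic fibres, and using dissipativity to promote a fibrewise isomorphism to an equivariant one---are absent from your proposal and are what make the argument go through.
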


It is clear that Proposition \ref{prop.some-isomorphism} is not an optimal result. One might for instance speculate that the assumption $\sigma_1 \sim \sigma_2$ should be sufficient to prove that the nonsingular Bernoulli actions $\al_i$ are orbit equivalent. Still, our result is nonempty: in Example \ref{ex.some-isomorphism}, we provide examples where the hypotheses of Proposition \ref{prop.some-isomorphism} are satisfied with $Y_1$ being a finite set with atomic measures and $Y_2 = [0,1]$ with two measures $\nu_2 \sim \eta_2$ that are equivalent with the Lebesgue measure. In these examples, there is no obvious conjugacy between the nonsingular Bernoulli actions given by \eqref{eq.my-bernoulli}.

\section{Preliminaries}

A von Neumann subalgebra $B \subset N$ is said to be \emph{with expectation} if there exists a faithful normal conditional expectation $E : N \to B$.

We start by recalling Popa's theory of \emph{intertwining-by-bimodules}, as introduced in \cite[Section 2]{Pop03}. We make use of the adaptations to the semifinite and infinite setting, which reached a final version in \cite[Section 4]{HI15}. So, let $M$ be any von Neumann algebra with separable predual and let $p,q \in M$ be nonzero projections. Let $A \subset pMp$ and $B \subset qMq$ be von Neumann subalgebras with expectation. We write $A \prec_M B$ if there exist projections $r \in A$, $s \in B$, a nonzero partial isometry $v \in r M s$ and a unital normal $*$-homomorphism $\theta : r A r \to s B s$ such that $a v = v \theta(a)$ for all $a \in r A r$ and $\theta(r A r) \subset s B s$ is with expectation.

We write $A \prec_{f,M} B$ if for every nonzero projection $e \in A' \cap pMp$, we have that $Ae \prec_M B$.


When $A$ is finite, $B$ is semifinite, $E_B : qMq \to B$ is a faithful normal conditional expectation and $\Tr$ is a faithful normal semifinite trace on $B$, the following results are contained in \cite[Theorem 4.3]{HI15}.
\begin{itemlist}
\item $A \not\prec_M B$ if and only if there exists a sequence of unitaries $a_n \in \cU(A)$ such that
$$\|E_B(x^* a_n y)\|_{2,\Tr} \to 0 \quad\text{for all $x,y \in p M q$ with $\Tr(x^*x) , \Tr(y^* y) < +\infty$.}$$
\item $A \prec_M B$ if and only if there exists an integer $n \in \N$, a finite projection $s \in M_n(\C) \ot B$, a nonzero partial isometry $v \in (\C^n \ot p M) s$ and a normal unital $*$-homomorphism $\theta : A \to s(M_n(\C) \ot B)s$ such that $a v = v \theta(a)$ for all $a \in A$.
\end{itemlist}

Recall that given a von Neumann subalgebra $A \subset N$, one defines $\cN_N(A) = \{u \in \cU(N) \mid u A u^* = A\}$ and one calls $\cN_N(A)\dpr$ the \emph{normalizer} of $A$ inside $N$. Note that $A' \cap N \subset \cN_N(A)\dpr$. When $A \subset N$ is with expectation, also $\cN_N(A)\dpr \subset N$ is with expectation.

Assume again that $M$ is a von Neumann algebra with separable predual and that $A \subset pMp$ and $B \subset qMq$ are von Neumann subalgebras with expectation. When  $e \in A' \cap pMp$ is a nonzero projection such that $Ae \prec_M B$, we can take a nonzero partial isometry $v$ as above, where $v \in rMs$ and $r \in Ae$. When $u \in \cN_{pMp}(A)$, we can replace $v$ by $uv$ and replace $\theta$ by $\theta \circ \Ad u^*$. It follows that $A \, ueu^* \prec_M B$. We conclude from this argument that there exists a unique projection $z$ in the center $\cZ(\cN_{pMp}(A)\dpr)$ of the normalizer such that $Az \prec_{f,M} B$ and $A(p-z) \not\prec_M B$.


If $M$ is a von Neumann algebra with separable predual and if $B \subset M$ is a von Neumann subalgebra with expectation, then $M$ is \emph{solid relative to} $B$ in the sense of \cite[Definition 3.2]{Mar16} if and only if every von Neumann subalgebra $Q \subset pMp$ with expectation and with diffuse center $\cZ(Q)$ satisfies at least one of the following properties: $Q$ is amenable or $Q \prec_M B$.

For every von Neumann algebra $M$ with separable predual, we denote by $\core(M)$ its \emph{continuous core}, which can be concretely realized as $M \rtimes_{\si^\vphi} \R$ whenever $\vphi$ is a faithful normal state on $M$ with modular automorphism group $(\si^\vphi_t)_{t \in \R}$. We denote by $\lambda_\vphi(t)$, $t \in \R$, the canonical unitary operators in the crossed product $\core(M) = M \rtimes_{\si^\vphi} \R$, generating the von Neumann subalgebra $L_\vphi(\R) \subset \core(M)$. There is a canonical faithful normal semifinite trace $\Tr$ on $\core(M)$. Both the inclusion $M \subset \core(M)$ and the trace $\Tr$ are essentially independent of the choice of $\vphi$, since Connes cocycle derivative theorem provides a trace preserving $*$-isomorphism $\theta : M \rtimes_{\si^\vphi} \R \to M \rtimes_{\si^\om} \R$ satisfying $\theta(a) = a$ for all $a \in M$ and $\theta(\lambda_\vphi(t)) = [D\vphi:D\om]_t \, \lambda_\om(t)$. The restriction of the trace $\Tr$ to $L_\vphi(\R)$ is semifinite. The unique trace preserving conditional expectation $E_{L_\vphi(\R)} : \core(M) \to L_\vphi(\R)$ satisfies $E_{L_\vphi(\R)}(a) = \vphi(a) 1$ for all $a \in M$.

Whenever $P \subset M$ is a von Neumann subalgebra and $E : M \to P$ is a faithful normal conditional expectation, we obtain a canonical trace preserving embedding $\core(P) \hookrightarrow \core(M)$, which can be concretely constructed by taking a faithful normal state $\vphi$ on $P$ and writing $\core(P) = P \rtimes_{\si^\vphi} \R \hookrightarrow M \rtimes_{\si^{\vphi \circ E}} \R = \core(M)$. Note that this embedding depends on the choice of $E$. In the trivial case where $P = \C 1$, we have that $E(a) = \psi(a)1$ and the embedding corresponds to $L_\psi(\R) \subset \core(M)$.

Given an action $\Gamma \actson I$ of a countable group $\Gamma$ on a countable set $I$ and given a von Neumann algebra $(P,\om)$ equipped with a faithful normal state, we consider the generalized Bernoulli action $\Gamma \actson (N,\om) = (P,\om)^I$. Here we use the notation $(P,\om)^I$ to denote the tensor product of copies of $(P,\om)$ indexed by $I$. The action $\Gamma \actson (N,\om)$ is state preserving. We get a canonical action of $\Gamma$ on the continuous core $\core(N)$ such that
$$\core(N \rtimes \Gamma) = \core(N) \rtimes \Gamma \; .$$

In \cite{Pop03}, Popa introduced his fundamental malleable deformation for probability measure preserving Bernoulli actions $\Gamma \actson (X_0,\mu_0)^\Gamma$, which has been a cornerstone for deformation/\allowbreak rigidity theory. It has been extended in several directions. In \cite{Ioa06}, another malleable deformation was found, adapted to noncommutative Bernoulli actions $\Gamma \actson (P,\tau)^\Gamma$, where $(P,\tau)$ is a tracial von Neumann algebra. This can be adapted in a straightforward way to the nontracial case, i.e.\ for Bernoulli actions $\Gamma \actson (P,\om)^\Gamma$, where $\om$ is a faithful normal state on $P$ (see e.g.\ \cite[Section 5]{Mar16}). Also Popa's spectral gap rigidity for Bernoulli actions, as introduced in \cite{Pop06}, can be extended to the setting of generalized Bernoulli actions, i.e.\ for actions $\Gamma \actson (X_0,\mu_0)^I$, where $I$ is a countable set on which $\Gamma$ is acting, see \cite[Section 4]{IPV10}. Putting all these generalizations together, we right away get the following variant of \cite[Corollary 4.3]{IPV10}.

\begin{theorem}\label{thm.def-rig-nc-bernoulli}
Let $(P,\om)$ be an amenable von Neumann algebra with a faithful normal state. Let $\Gamma \actson I$ be an action of a countable group $\Gamma$ on a countable set $I$. Assume that $\Stab(i)$ is amenable for every $i \in I$ and assume that there exists a $\kappa \in \N$ such that $\Stab J$ is finite whenever $J \subset I$ and $|J| \geq \kappa$. Denote, as above, $(N,\om) = (P,\om)^I$ and let $\Gamma \actson (N,\om)$ be the generalized Bernoulli action. Write $M = N \rtimes \Gamma$.

Let $p \in \core(M)$ be a projection of finite trace and $A \subset p \core(M) p$ a von Neumann subalgebra such that the relative commutant $A' \cap p \core(M) p$ has no amenable direct summand. Denote by $Q = \cN_{p \core(M) p}(A)\dpr$ the normalizer of $A$. Let $z \in \cZ(Q)$ be the maximal projection such that $A z \prec_f L_\om(\R)$. Put $z' = p-z$. Then $Q z' \prec_f L_\om(\R) \vee L(\Gamma)$.
\end{theorem}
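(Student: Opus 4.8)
The plan is to combine the noncommutative generalization of Popa's malleable deformation for Bernoulli actions with Popa's spectral gap rigidity, transplanted to the continuous core. First I would set up the malleable deformation: since $(P,\om)$ is amenable with a faithful normal state, the results cited in the preamble (the noncommutative Ioana-type deformation of \cite{Ioa06} in the form of \cite[Section 5]{Mar16}, together with the generalized-Bernoulli spectral gap argument of \cite[Section 4]{IPV10}) provide a one-parameter family of state-preserving automorphisms $(\al_t)$ of an enlarged Bernoulli algebra $\widetilde N = (\widetilde P,\widetilde\om)^I \supset N$ that commutes appropriately with the $\Gamma$-action, so that we get a malleable deformation of $\widetilde M = \widetilde N \rtimes \Gamma \supset M$. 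Passing to the continuous core, this deformation extends to $\core(\widetilde M) \supset \core(M)$, fixing $L_\om(\R)$ pointwise and commuting with $\Tr$; I would record that $\core(\widetilde M) = \core(\widetilde N)\rtimes\Gamma$ and that the deformation is by trace-preserving automorphisms.

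Next I would run the dichotomy argument on the finite corner $p\,\core(M)\,p$. The subalgebra $A$ has relative commutant with no amenable direct summand; the normalizer $Q = \cN_{p\core(M)p}(A)\dpr$ contains $A'\cap p\core(M)p$. By spectral gap rigidity (the noncommutative, generalized-Bernoulli, semifinite version of Popa's argument, exactly as in the proof of \cite[Corollary 4.3]{IPV10} adapted to cores), one of two things happens for each piece: either the deformation $\al_t$ converges uniformly on the unit ball of $A z$ for the relevant central projection $z$, or it does not. On the complementary projection $z' = p - z$, failure of uniform convergence on $A z'$ (equivalently, $A z' \not\prec L_\om(\R)$) forces, via the standard transversality/intertwining trick for malleable deformations in the semifinite setting (\cite[Theorem 4.3]{HI15} for the intertwining and the Popa-style ``if $\al_t$ does not converge uniformly on the normalized unit ball then it does converge uniformly on the normalizer, which must then intertwine into the fixed subalgebra''), that the whole normalizer $Q z'$ intertwines into the part of $\core(M)$ fixed by the deformation. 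In the noncommutative generalized Bernoulli picture, under the stabilizer hypotheses ($\Stab(i)$ amenable, $\Stab J$ finite for $|J|\geq\kappa$) the algebra fixed by $\al_t$ inside $\core(M)$ is exactly $L_\om(\R)\vee L(\Gamma)$ — this is where the stabilizer conditions get used, to guarantee that the ``position'' algebra of the deformation is $L(\Gamma)$ and not something larger. Hence $Q z' \prec L_\om(\R)\vee L(\Gamma)$, and because this is stable under cutting by projections in $\cZ(Q z')$ (the maximality of $z$ was chosen precisely so that $A z'\not\prec L_\om(\R)$ on every summand), one upgrades $\prec$ to $\prec_f$, giving $Q z' \prec_f L_\om(\R)\vee L(\Gamma)$.

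The main obstacle I anticipate is the careful bookkeeping in the semifinite/type III setting: making the malleable deformation, the transversality inequality, and the spectral gap argument all work with the \emph{non-trace-preserving} state on $(P,\om)^I$ while living inside the \emph{tracial} core, and correctly identifying the deformation-fixed subalgebra as $L_\om(\R)\vee L(\Gamma)$ under the stabilizer hypotheses. One must check that the spectral gap estimate survives the passage to $\core(M)$ (it does, because amenability of $(P,\om)$ gives the needed weak containment of bimodules, and the core is built over an amenable-relative inclusion), and that the ``no amenable direct summand'' hypothesis on $A'\cap p\core(M)p$ — rather than on $A$ itself — is what feeds the spectral gap argument, since it is the relative commutant (contained in $Q$) that must fail to have spectral gap in order for the deformation not to converge uniformly. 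Everything else is a routine, if lengthy, transcription of the IPV/Ioana/Marrakchi machinery.
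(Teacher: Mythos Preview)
Your overall approach is the same as the paper's: reduce by maximality of $z$ to the piece where $A \not\prec L_\om(\R)$, then run the noncommutative generalized-Bernoulli deformation/spectral-gap argument of \cite[Corollary 4.3]{IPV10} inside the continuous core. The paper's proof is literally a two-line invocation of that machinery.

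There is, however, a genuine gap in how you describe the output of that machinery. The conclusion of the \cite{IPV10} argument (transplanted to $\core(M)$) is \emph{not} that $Qz'$ intertwines into a single ``fixed algebra'' $L_\om(\R)\vee L(\Gamma)$; it is a dichotomy:
\[
Q \prec_{\core(M)} \core(N)\rtimes \Stab i \quad\text{for some } i\in I,\qquad\text{or}\qquad Q \prec_{\core(M)} L_\om(\R)\vee L(\Gamma).
\]
The first alternative does not disappear just from the stabilizer hypotheses; it must be ruled out by a separate, essential step. Since $P$ is amenable and $\Stab i$ is amenable, $\core(N)\rtimes \Stab i$ is amenable; but $A'\cap p\,\core(M)\,p \subset Q$ has no amenable direct summand, hence $Q$ cannot intertwine into an amenable algebra. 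Only then does one conclude $Qz' \prec L_\om(\R)\vee L(\Gamma)$. This is exactly where the amenability of $P$ and of $\Stab i$ are actually used, not (as you write) merely to make a spectral-gap bimodule estimate survive the passage to the core, and not to shrink a ``fixed-point algebra''.

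Relatedly, your description of the role of the two stabilizer assumptions is off. The hypothesis that $\Stab J$ is finite for $|J|\ge\kappa$ is used \emph{inside} the \cite{IPV10} argument to force the intertwining target down to a single-coordinate stabilizer $\Stab i$ (rather than $\Stab J$ for arbitrarily large $J$). The hypothesis that each $\Stab i$ is amenable is then used \emph{afterwards}, in the elimination step above. Neither is used to identify the deformation's fixed subalgebra. Once you insert this missing branch-elimination step, your plan matches the paper's proof.
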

\begin{proof}
Write $\core(L(\Gamma)) = L_\om(\R) \vee L(\Gamma)$. Replacing $A$ by $A z\dpr$ where $z\dpr$ is an arbitrary projection in $\cZ(Q) z'$, we may assume that $A \not\prec L_\om(\R)$ and we have to prove that $Q \prec \core(L(\Gamma))$. Even though $\om$ is not necessarily tracial, the tensor length deformation makes sense in this context and the proof of \cite[Corollary 4.3]{IPV10} can be copied almost verbatim. The conclusion is that at least one of the following statements hold: $Q \prec \core(N) \rtimes \Stab i$ for some $i \in I$, or $Q \prec \core(L(\Gamma))$. The von Neumann algebra $\core(N) \rtimes \Stab i$ is amenable. Since $A' \cap p \core(M) p \subset Q$, the von Neumann algebra $Q$ has no amenable direct summand. Therefore, it is impossible that $Q \prec \core(N) \rtimes \Stab i$. This concludes the proof of the theorem.
\end{proof}

Also the following result is an immediate noncommutative variant of known results for probability measure preserving Bernoulli actions. The method was introduced in \cite[Section 3]{Pop03} and the following version is a straightforward generalization of \cite[Lemma 4.2]{Vae07}. The same result still holds when replacing the ad hoc construction \eqref{eq.pseudo-almost} by the quasinormalizer of $B$ inside $pMp$, but we only need this simpler version.

\begin{proposition}\label{prop.control-normalizer}
Make the same assumptions as in Theorem \ref{thm.def-rig-nc-bernoulli}. Let $p \in L(\Gamma)$ be a projection and $B \subset p L(\Gamma) p$ a diffuse von Neumann subalgebra. Define
\begin{equation}\label{eq.pseudo-almost}
D = \bigl\{u \in p M p \bigm| \exists \be \in \Aut(B) \; , \; \forall b \in B \; : \; u b = \be(b) u \bigr\}\dpr
\end{equation}
and note that $\cN_{pMp}(B)\dpr \subset D$.
\begin{enumlist}
\item If $B \not\prec_{L(\Gamma)} L(\Stab i)$ for every $i \in I$, then $D \subset p L(\Gamma) p$.
\item If $rDr$ is nonamenable for every nonzero projection $r \in B' \cap p L(\Gamma) p$, then $D \subset p L(\Gamma) p$.
\end{enumlist}
\end{proposition}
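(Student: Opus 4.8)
The plan is to deduce both statements from Theorem \ref{thm.def-rig-nc-bernoulli} applied inside the continuous core, after a standard reduction passing from the ``pseudo-almost-normalizer'' $D$ to an honest normalizer of an associated algebra. First I would observe that part (2) implies part (1): if $B \not\prec_{L(\Gamma)} L(\Stab i)$ for every $i$, then a fortiori $B \not\prec_{L(\Gamma)} L(\Stab J)$ for large $J$, and since $B$ is diffuse and sits inside $pL(\Gamma)p$ one expects $rDr$ to be nonamenable for every nonzero $r \in B' \cap pL(\Gamma)p$ — indeed $B r \subset rDr$ is diffuse, but diffuseness alone is not nonamenability, so more care is needed; the cleanest route is to prove (2) directly and then show that the hypothesis of (1) forces the hypothesis of (2) via the fact that $L(\Gamma) = L(\Z \ast \Lambda)$ with $\Lambda$ nonamenable makes every diffuse subalgebra of $pL(\Gamma)p$ that does not embed into a stabilizer subalgebra have nonamenable normalizer — this last point is itself an instance of the solidity-type dichotomy. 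To keep things robust I would treat (1) and (2) on the same footing by proving the contrapositive: if $D \not\subset pL(\Gamma)p$, then there is a nonzero $u \in D$ with, say, $E_{L(\Gamma)}(u^*u)$ or some spectral projection witnessing that $u$ has a nontrivial component transverse to $L(\Gamma)$, and from the commutation relation $ub = \beta(b)u$ for all $b \in B$ we build a nonzero $B$-$B$ (or $B$-$\beta(B)$) bimodular partial isometry; squaring up $u^*u \in B' \cap pMp$ and $uu^* \in \beta(B)' \cap pMp$ gives projections, and after cutting down we obtain $B \prec_M L(\Gamma)$ together with a normalizing structure.

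The core step: inside $M = N \rtimes \Gamma$, consider $\core(M) = \core(N) \rtimes \Gamma$. Pick a finite-trace projection $p_0$ and work with $B \vee L_\om(\R) \subset \core(L(\Gamma)) = L_\om(\R) \vee L(\Gamma) \subset \core(M)$. The algebra $D$ normalizes $B$ up to automorphisms; I would set $A := B$ viewed inside $p\core(M)p$ (tensoring with a finite cutdown of $L_\om(\R)$ to make the trace finite — this is the standard trick, taking $A = B \ot 1 \subset B \ovt L_\om(\R)e$ for a finite-trace projection $e \in L_\om(\R)$) and let $Q := \cN(A)\dpr$. The hypothesis ``$rDr$ nonamenable for every nonzero $r \in B' \cap pL(\Gamma)p$'' translates, after the cutdown, into ``$A' \cap p\core(M)p$ has no amenable direct summand'' — here I would use that $A' \cap p\core(M)p$ contains $rDr$-type corners and that amenability passes to subalgebras-with-expectation, so a nonamenable corner of the relative commutant kills amenable direct summands. (This translation is the most delicate bookkeeping point and I would want to be careful that $D$, defined with arbitrary $\beta \in \Aut(B)$, really lands in the relative commutant of $A$ after the symmetrization $B \ot 1$; the point is that the automorphisms $\beta$ act trivially on the $L_\om(\R)$-leg and one uses $u$ together with a suitable choice to land in the genuine commutant, or alternatively one works with the groupoid $\bigsqcup_\beta \{u : ub = \beta(b)u\}$ and applies the normalizer-control remark from the Preliminaries about the maximal projection $z$.) Then Theorem \ref{thm.def-rig-nc-bernoulli} yields: since $A = B \subset L(\Gamma) \subset L_\om(\R)\vee L(\Gamma)$ and $B$ is diffuse hence $B \not\prec L_\om(\R)$ (as $L_\om(\R)$ is abelian and $B \cap L(\Gamma)$ is diffuse — more precisely $B \prec_{\core(M)} L_\om(\R)$ would force a finite-dimensional corner of $B$, contradicting diffuseness), we are in the case where $z' = p$ and conclude $Q \prec_{\core(M)} L_\om(\R) \vee L(\Gamma) = \core(L(\Gamma))$.

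From $Q \prec_{\core(M)} \core(L(\Gamma))$ I would descend back to $M$: since $D \subset Q$ (after the cutdown) and everything is with expectation and the cores of $L(\Gamma)$ and $M$ are compatibly embedded, a standard argument — intersecting with $M$, using that the trace-preserving expectation $E_{\core(L(\Gamma))}$ restricts to $E_{L(\Gamma)}$ on $M$, and that a partial isometry conjugating into the core of a subalgebra can be arranged to lie in $M$ when the source algebra is in $M$ — shows $D \prec_M L(\Gamma)$, and in fact, because $D$ contains the diffuse algebra $B$ which already lies in $L(\Gamma)$, one upgrades ``$\prec$'' to the genuine containment $D \subset pL(\Gamma)p$. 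The main obstacle, as flagged, is the verification that the nonamenability hypothesis on $rDr$ correctly produces ``no amenable direct summand'' for $A' \cap p\core(M)p$ after the symmetrization and core-cutdown, and dually that the output partial isometry can be pushed from $\core(M)$ down to $M$; both are by now routine in the deformation/rigidity literature (cf.\ \cite[Lemma 4.2]{Vae07} and \cite[Section 5]{Mar16}), which is why the statement is quoted as an ``immediate variant'', but they are the only places where genuine work beyond invoking Theorem \ref{thm.def-rig-nc-bernoulli} is needed.
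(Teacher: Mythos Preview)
Your approach differs fundamentally from the paper's and contains genuine gaps. The paper never passes to the continuous core and never invokes Theorem \ref{thm.def-rig-nc-bernoulli} for this proposition. Part (1) is proved directly: since $L(\Gamma) \subset M^\om$, the tracial Fourier-coefficient computation of \cite[Lemma 4.2]{Vae07} goes through verbatim and forces any $u$ with $ub=\beta(b)u$ to lie in $pL(\Gamma)p$. Part (2) is then deduced from (1) by contrapositive: if $B \prec_{L(\Gamma)} L(\Stab i)$ for some $i$, one chooses a maximal finite $J$ with $B \prec_{L(\Gamma)} L(\Stab J)$ but $B \not\prec L(\Stab(J\cup\{j\}))$ for every $j\notin J$, takes intertwining data $(v,\theta)$, and again applies \cite[Lemma 4.2]{Vae07} to obtain $v^* D v \subset N \rtimes \Norm J$; since $\Norm J$ is amenable, the corner $rDr$ with $r=vv^* \in B'\cap pL(\Gamma)p$ is amenable. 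So the logical flow is opposite to what you propose: (2) comes from (1), not conversely.

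Your route fails at three points. First, Proposition \ref{prop.control-normalizer} is stated for an arbitrary $\Gamma \actson I$ satisfying the hypotheses of Theorem \ref{thm.def-rig-nc-bernoulli}, so your reduction of (1) to (2) via ``$L(\Gamma)=L(\Z\ast\Lambda)$ with $\Lambda$ nonamenable'' is not available in this generality. Second, the elements generating $D$ satisfy $ub=\beta(b)u$, not $ub=bu$; hence $D \not\subset B'\cap pMp$, and your translation of ``$rDr$ nonamenable for every $r$'' into ``$A'\cap p\core(M)p$ has no amenable direct summand'' is unjustified --- the relative commutant of your cutdown $A$ receives $B'\cap pMp$, not $D$. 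Third, and decisively, even granting all of that, Theorem \ref{thm.def-rig-nc-bernoulli} only yields $Qz' \prec_f \core(L(\Gamma))$, an intertwining statement; your claimed upgrade to the containment $D \subset pL(\Gamma)p$ on the grounds that ``$B$ already lies in $L(\Gamma)$'' is not a valid inference --- $\prec$ is strictly weaker than containment and there is no general mechanism to promote it here.
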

\begin{proof}
Since $L(\Gamma)$ lies in the centralizer of the state $\om$ on $M$, all computations of \cite[Lemma 4.2]{Vae07} go through verbatim. So, if $B \not\prec_{L(\Gamma)} L(\Stab i)$ for every $i \in I$, it follows from \cite[Lemma 4.2]{Vae07} that $D \subset p L(\Gamma) p$.

Next assume that there exists an $i \in I$ such that $B \prec_{L(\Gamma)} L(\Stab i)$. It suffices to prove that $rDr$ is amenable for some nonzero projection $r \in B' \cap p L(\Gamma) p$. By assumption, $\Stab J$ is finite whenever $J \subset I$ and $|J| \geq \kappa$. Also, $B$ is diffuse. We thus find a finite nonempty subset $J \subset I$ such that $B \prec_{L(\Gamma)} L(\Stab J)$ and $B \not\prec_{L(\Gamma)} L(\Stab(J \cup \{j\}))$ for every $j \in I \setminus J$.

As in \cite[Remark 3.8]{Vae07}, we can take an integer $n \in \N$, a projection $q \in M_n(\C) \ot L(\Stab J)$, a nonzero partial isometry $v \in (\C^n \ot p L(\Gamma))q$ and a unital normal $*$-homomorphism $\theta : B \to q (M_n(\C) \ot L(\Gamma)) q$ such that $b v = v \theta(b)$ for all $b \in B$ and such that $\theta(B) \not\prec_{L(\Stab J)} L(\Stab(J \cup \{j\}))$ for every $j \in I \setminus J$. When $u \in pMp$ and if $\be \in \Aut(B)$ such that $u b = \be(b) u$ for all $b \in B$, it follows that
$$v^* u v \, \theta(b) = \theta(\be(b)) \, v^* u v \quad\text{for all $b \in B$.}$$
By  \cite[Lemma 4.2]{Vae07}, it follows that $v^* u v \in N \rtimes \Norm J$, where $\Norm J = \{g \in \Gamma \mid g \cdot J = J \}$. Writing $r = v v^*$ and $s = v^* v$, we get that $r$ is a projection in $B' \cap p L(\Gamma) p \subset D$, that $s$ is a projection in $N \rtimes \Norm J$ and that $v^* D v \subset N \rtimes \Norm J$. Since $L(\Gamma) \subset N \rtimes \Gamma$ is with expectation, also $B \subset p M p$ and thus $D \subset p M p$ are with expectation. It follows that $v^* D v$ is with expectation in $s(N \rtimes \Norm J)s$. Since $J$ is finite and nonempty, the group $\Norm J$ is amenable. We conclude that $v^* D v$ is amenable, so that $rDr$ is amenable.
\end{proof}


\section{Measure classes of faithful normal states}\label{sec.measure-classes}

For any self-adjoint, possibly unbounded operator $T$, we denote by $\class(T)$ its spectral measure class on $\R$. Note that a Borel set $\cU \subset \R$ has measure zero for $\class(T)$ if and only if the spectral projection $1_\cU(T)$ equals $0$.

Given a faithful normal state $\om$ on a von Neumann algebra $M$, we define the measure class $\class(\om) := \class(\log \Delta_\om)$, where $\Delta_\om$ is the modular operator of $\om$. Of course, $\class(\om)$ highly depends on the choice of the state $\om$ and hence, does not provide an invariant of the von Neumann algebra $M$. A key element of this paper is that certain von Neumann algebras, including many nonsingular Bernoulli crossed products, have a favorite state $\om$ that can be essentially intrinsically characterized, so that $\class(\om)$ becomes an isomorphism invariant for this family of von Neumann algebras.

To establish these results, we rephrase \cite[Corollary 3.2]{HSV16} in the following way, also introducing the notation $\vphi \prec_f \om$ for faithful normal states on a von Neumann algebras.

\begin{lemma}[{\cite[Corollary 3.2]{HSV16}}]\label{lem.HSV}
Let $\vphi$ and $\om$ be faithful normal states on a von Neumann algebra $M$, with corresponding canonical subalgebras $L_\vphi(\R)$ and $L_\om(\R)$ of the continuous core $\core(M)$. Then the following three statements are equivalent.
\begin{enumlist}
\item $L_\vphi(\R) \prec_{\core(M)} L_\om(\R)$.
\item There exist a nonzero partial isometry $v \in M$ and $\gamma > 0$ such that $\gamma^{it} \, [D\om:D\vphi]_t \, \si^\vphi_t(v) = v$ for all $t \in \R$.
\item There exists a nonzero partial isometry $v \in M$ with $e := v^* v \in M^\vphi$, $q:= vv^* \in M^\om$ and $\vphi(e)^{-1} \vphi(v^* x v) = \om(q)^{-1} \om(x)$ for all $x \in qMq$.
\end{enumlist}
When these equivalent conditions hold, we write $\vphi \prec \om$. Note that by 3, we have $\vphi \prec \om$ iff $\om \prec \vphi$.

Also the following three statements are equivalent.
\begin{enumlist}[resume]
\item $L_\vphi(\R) \prec_{f,\core(M)} L_\om(\R)$.
\item For every nonzero projection $p \in M^\vphi$, there exist $\gamma > 0$ and $v \in M$ as in 2 with $v^* v \leq p$.
\item For every nonzero projection $p \in M^\vphi$, there exists $v \in M$ as in 3 with $v^* v \leq p$.
\end{enumlist}
When these equivalent conditions hold, we write $\vphi \prec_f \om$.
\end{lemma}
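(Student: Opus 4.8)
The plan is to reduce the whole lemma to an intertwining of one-parameter unitary groups, most of which is \cite[Corollary 3.2]{HSV16}. Realise $\core(M) = M \rtimes_{\si^\vphi} \R$, so that $L_\vphi(\R)$ is generated by the unitaries $\lambda_\vphi(t)$, and use the Connes cocycle isomorphism $\theta$ recalled above to identify $L_\om(\R)$ inside $\core(M)$ with the von Neumann algebra generated by the one-parameter unitary group $\mu_t := [D\om:D\vphi]_t \, \lambda_\vphi(t)$; this is a group by the cocycle identity, and multiplying it by the scalar character $\gamma^{it}$ changes neither the group law nor the generated algebra. Since $\si^\vphi_t(v) = \lambda_\vphi(t)\, v\, \lambda_\vphi(t)^*$ for $v \in M \subset \core(M)$, the equation in statement 2, namely $\gamma^{it}\,[D\om:D\vphi]_t\,\si^\vphi_t(v) = v$, is, after one rearrangement, the same as
\[
v\,\lambda_\vphi(t) = \gamma^{it}\,\mu_t\,v \qquad (t \in \R) ,
\]
i.e. $v$ is a nonzero partial isometry in $M$ intertwining the generator $\lambda_\vphi(t)$ of $L_\vphi(\R)$ with the generator $\gamma^{it}\mu_t$ of $L_\om(\R)$. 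Taking adjoints, $\lambda_\vphi(t)\, v^* = v^*\,(\gamma^{it}\mu_t)$, so, letting $\theta_0 \colon L_\vphi(\R) \to L_\om(\R)$ be the unital normal $*$-homomorphism determined by $\lambda_\vphi(t) \mapsto \gamma^{it}\mu_t$, we get $a\, v^* = v^*\, \theta_0(a)$ for all $a \in L_\vphi(\R)$ with $v^* \neq 0$; as $\theta_0(L_\vphi(\R)) \subset L_\om(\R)$ is with expectation (both being abelian), this is exactly the definition of $L_\vphi(\R) \prec_{\core(M)} L_\om(\R)$, so $2 \Rightarrow 1$. The converse $1 \Rightarrow 2$ is the substantive step and is \cite[Corollary 3.2]{HSV16}: an abstract bimodule intertwiner of $L_\vphi(\R)$ into $L_\om(\R)$ gives, after passing to corners, a nonzero partial isometry $w \in \core(M)$ and a positive operator $g$ affiliated with $L_\om(\R)$ with $\lambda_\vphi(t)\, w = w\, g^{it}$, and the work in \cite{HSV16} is to use the structure of $M \subset \core(M)$ together with the canonical trace $\Tr$ and the expectation $E_{L_\om(\R)}$ to arrange that $w$ lies in $M$ and that $g^{it} = \gamma^{it}\mu_t$ for a single scalar $\gamma > 0$. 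I would simply invoke this; turning an intertwiner that a priori lives in the large algebra $\core(M)$ into an honest partial isometry in $M$ with the sharp $\gamma^{it}$-normalisation is the one place where real work is hidden, and is the main obstacle in any self-contained treatment.

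For $2 \Leftrightarrow 3$ I would argue directly with modular theory. Rewriting the relation in 2 as $\si^\vphi_t(v) = \gamma^{-it}\,[D\vphi:D\om]_t\, v$ and using that the Connes cocycle is unitary, one reads off at once that $\si^\vphi_t(v^*v) = v^*v$, so $e := v^*v \in M^\vphi$; applying Connes' formula $\si^\vphi_t = \Ad([D\vphi:D\om]_t)\circ\si^\om_t$ to the same relation gives $\si^\om_t(v) = \gamma^{-it}\, v\,[D\vphi:D\om]_t$ and hence $\si^\om_t(vv^*) = vv^*$, so $q := vv^* \in M^\om$. The transported functional $\psi := \vphi(e)^{-1}\vphi(v^*\cdot v)$ is then a faithful normal state on $qMq$ whose modular automorphism group, computed via $\si^\psi_t = \Ad(v)\circ\si^\vphi_t\circ\Ad(v^*)$ together with the two displayed relations and Connes' formula, equals $\si^\om_t|_{qMq}$; a short further computation (e.g.\ with Connes cocycles, using once more the sharp form of the relation in 2) upgrades this to $\psi = \om(q)^{-1}\om|_{qMq}$, which is statement 3. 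Reversing these steps, with $\gamma = \om(q)/\vphi(e)$, gives $3 \Rightarrow 2$. Statement 3 is visibly symmetric in $\vphi$ and $\om$, which yields the remark that $\vphi \prec \om$ iff $\om \prec \vphi$.

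Finally, the triple $4, 5, 6$: by definition $L_\vphi(\R) \prec_{f,\core(M)} L_\om(\R)$ means $L_\vphi(\R)\,f \prec_{\core(M)} L_\om(\R)$ for every nonzero projection $f \in L_\vphi(\R)' \cap \core(M)$, and one checks (as in \cite{HSV16}) that it suffices to test this for $f = p$ ranging over nonzero projections of $M^\vphi \subset L_\vphi(\R)' \cap \core(M)$. Applying the already-established equivalence $1 \Leftrightarrow 2 \Leftrightarrow 3$, now in the localised form "with left support dominated by $p$", to each such $p$ then reproduces statements 5 and 6 verbatim. This last part is routine once $1 \Leftrightarrow 2 \Leftrightarrow 3$ is in hand.
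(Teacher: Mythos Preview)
The paper does not give a proof of this lemma at all: it is stated as a direct citation of \cite[Corollary 3.2]{HSV16}, and the text resumes immediately afterwards. So there is no ``paper's own proof'' to compare against; your sketch is essentially a reconstruction of what one finds in \cite{HSV16}, and at that level it is sound. The rewriting of condition~2 as the intertwining relation $v\,\lambda_\vphi(t)=\gamma^{it}\mu_t\,v$ inside $\core(M)$ is exactly the right viewpoint, and you correctly isolate $1\Rightarrow 2$ as the only place where genuine work (pushing a $\core(M)$-intertwiner down to a partial isometry in $M$ with a single scalar $\gamma$) occurs.

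Two places in your write-up are a bit thin. In $2\Rightarrow 3$, once you have shown $\si^\psi_t=\si^\om_t|_{qMq}$, equality of modular groups alone does not force $\psi=\om(q)^{-1}\om|_{qMq}$; by Pedersen--Takesaki the cocycle $[D\psi:D(\om(q)^{-1}\om|_{qMq})]_t$ is a one-parameter group in the center of $(qMq)^\om$, and you really do need the sharp relation $\si^\om_t(v)=\gamma^{-it}\,v\,[D\vphi:D\om]_t$ together with $\gamma=\om(q)/\vphi(e)$ to see that this cocycle is trivial. You say as much, but it deserves an explicit line. Similarly, for $4\Leftrightarrow 5$, the relative commutant $L_\vphi(\R)'\cap\core(M)$ is $M^\vphi\vee L_\vphi(\R)$, not just $M^\vphi$, so the reduction ``it suffices to test on $p\in M^\vphi$'' needs the observation that any finite-trace projection in $M^\vphi\vee L_\vphi(\R)$ is dominated by one of the form $p\otimes r$ with $p\in M^\vphi$ and $r\in L_\vphi(\R)$, and that the $L_\vphi(\R)$-factor is harmless for the intertwining. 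Both points are routine, but worth naming if you intend this as more than a pointer to \cite{HSV16}.
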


%

In full generality, the relation $\vphi \prec_f \om$ is not strong enough to conclude that $\class(\vphi) \prec \class(\om)$. We nevertheless have the following partial results.

\begin{proposition}\label{prop.class}
Let $\vphi$ and $\om$ be faithful normal states on a von Neumann algebra $M$.
\begin{enumlist}
\item If $\vphi \prec_f \om$, there exists an atomic probability measure $\rho$ on $\R$ such that $\class(\vphi) \prec \rho \ast \class(\om)$.
\item If $\vphi \prec \om$ and if $M^\vphi$ is a factor, then $\class(\vphi) \prec \class(\om)$.
\end{enumlist}
\end{proposition}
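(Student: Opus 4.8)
The plan is to analyze the modular operators $\Delta_\vphi$ and $\Delta_\om$ through their restrictions to the ``corner subspaces'' of $L^2(M,\vphi)$ and $L^2(M,\om)$, using the partial-isometry descriptions of $\vphi\prec\om$ and $\vphi\prec_f\om$ from Lemma~\ref{lem.HSV}. The elementary building block, used repeatedly, is the following: if $\psi$ is a faithful normal state on $M$ and $f,g\in M^\psi$ are projections, then $\overline{fMg\,\xi_\psi}\subseteq L^2(M,\psi)$ is $\Delta_\psi^{it}$-invariant, and when $f=g$ the map $x\xi_\psi\mapsto\psi(f)^{1/2}x\xi_{\psi_f}$ is a $\Delta^{it}$-equivariant unitary onto $L^2(fMf,\psi_f)$, where $\psi_f=\psi(f)^{-1}\psi(f\cdot f)$ and $\sigma^{\psi_f}_t=\sigma^\psi_t|_{fMf}$; since $\overline{fMg\,\xi_\psi}\subseteq\overline{(f\vee g)M(f\vee g)\,\xi_\psi}$, this gives $\class(\log\Delta_\psi|_{\overline{fMg\,\xi_\psi}})\prec\class(\psi_{f\vee g})\prec\class(\psi)$ in general. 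I will also freely use that $\prec$ is preserved under countable joins of measure classes and under convolution in each variable, and that $\class(\log T)=\bigvee_k\class(\log T|_{H_k})$ whenever $L^2=\bigoplus_kH_k$ is a decomposition into countably many invariant subspaces of a one-parameter unitary group $e^{itT}$.

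For~(2), pick a partial isometry $v$ as in Lemma~\ref{lem.HSV}(3), so $e:=v^*v\in M^\vphi$, $q:=vv^*\in M^\om$, and $\Ad v^*$ is a state-preserving $*$-isomorphism $(qMq,\om_q)\to(eMe,\vphi_e)$; hence $\class(\vphi_e)=\class(\om_q)\prec\class(\om)$ by the building block. It remains to prove $\class(\vphi)\prec\class(\vphi_e)$. Since $M^\vphi$ is a factor with separable predual, I can tile the identity by subprojections of $e$: choose partial isometries $w_n\in M^\vphi$ with $w_n^*w_n$ pairwise orthogonal, $\sum_nw_n^*w_n=1$ and $w_nw_n^*\le e$. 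Writing $p_n=w_n^*w_n\in M^\vphi$, the decomposition $L^2(M,\vphi)=\bigoplus_{n,m}\overline{p_nMp_m\,\xi_\vphi}$ is into $\Delta_\vphi^{it}$-invariant summands; left multiplication by $w_n$ followed by right multiplication by $w_m^*$ is a $\Delta_\vphi^{it}$-intertwiner (because $w_n,w_m\in M^\vphi$) identifying $\overline{p_nMp_m\,\xi_\vphi}$ with $\overline{(w_nw_n^*)M(w_mw_m^*)\,\xi_\vphi}\subseteq\overline{eMe\,\xi_\vphi}$. By the building block each summand has class $\prec\class(\vphi_e)$, and a countable join gives $\class(\vphi)\prec\class(\vphi_e)\prec\class(\om)$.

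For~(1) there is no factoriality available, and the point is to control the cross-corners up to a scalar. Using $\vphi\prec_f\om$ and a maximality argument, fix a (necessarily countable) family of partial isometries $(v_i)_{i\in I}$, each witnessing Lemma~\ref{lem.HSV}(2)--(3), with $e_i:=v_i^*v_i\in M^\vphi$ pairwise orthogonal and $\sum_ie_i=1$, with $q_i:=v_iv_i^*\in M^\om$, and with $\gamma_i>0$ such that $\gamma_i^{it}\,[D\om:D\vphi]_t\,\sigma^\vphi_t(v_i)=v_i$. Then $L^2(M,\vphi)=\bigoplus_{i,j}\overline{e_iMe_j\,\xi_\vphi}$ with $\Delta_\vphi^{it}$-invariant summands. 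The core computation is that $q_ixq_j\,\xi_\om\mapsto v_i^*xv_j\,\xi_\vphi$ extends, up to the positive scalar $(\vphi(e_j)/\om(q_j))^{1/2}$ (using the state relation of Lemma~\ref{lem.HSV}(3)), to a unitary $\overline{q_iMq_j\,\xi_\om}\to\overline{e_iMe_j\,\xi_\vphi}$ that conjugates $\Delta_\om^{it}$ into $(\gamma_i/\gamma_j)^{-it}\Delta_\vphi^{it}$; this follows from the cocycle identity, which yields $\sigma^\vphi_t(v_i^*xv_j)=(\gamma_i/\gamma_j)^{it}\,v_i^*\sigma^\om_t(x)\,v_j$ from the relations on $v_i$ and $v_j$. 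Hence $\class(\log\Delta_\vphi|_{\overline{e_iMe_j\,\xi_\vphi}})=\delta_{\log(\gamma_i/\gamma_j)}\ast\class(\log\Delta_\om|_{\overline{q_iMq_j\,\xi_\om}})\prec\delta_{\log(\gamma_i/\gamma_j)}\ast\class(\om)$ by the building block. Taking the countable join over $i,j$ and letting $\rho$ be any atomic probability measure equivalent to $\bigvee_{i,j}\delta_{\log(\gamma_i/\gamma_j)}$ gives $\class(\vphi)\prec\rho\ast\class(\om)$.

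The only genuinely new point, and the one I expect to need the most care, is this cross-corner computation in~(1): although the $v_i$ do not lie in $M^\vphi$, the cocycle relation of Lemma~\ref{lem.HSV}(2) forces $x\mapsto v_i^*xv_j$ to intertwine $\sigma^\om$ and $\sigma^\vphi$ up to the character $(\gamma_i/\gamma_j)^{it}$, and it is precisely this scalar that produces the shift $\delta_{\log(\gamma_i/\gamma_j)}$ --- and explains why, absent a factoriality hypothesis on $M^\vphi$, the atomic correction $\rho$ cannot in general be removed. The remaining ingredients (the building block, the tiling of a factor, checking that the various left/right multiplications are $\Delta^{it}$-intertwiners, and the stability of $\prec$ under countable joins and convolution) are routine.
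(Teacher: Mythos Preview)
Your proof is correct and follows essentially the same approach as the paper: partition $1=\sum_n e_n$ in $M^\vphi$ via Lemma~\ref{lem.HSV}(5), and use the intertwining relation $\sigma^\vphi_t(v_i^* x v_j)=(\gamma_i/\gamma_j)^{it}\,v_i^*\sigma^\om_t(x)\,v_j$ to compare $\Delta_\vphi$ and $\Delta_\om$ up to the character shifts $\delta_{\log(\gamma_i/\gamma_j)}$. The only difference is packaging: the paper bundles your corner-by-corner analysis into a single isometry $V:L^2(M,\vphi)\to\ell^2(\N^2)\otimes L^2(M,\om)$, $V(x)=\sum_{n,m}\gamma_m^{1/2}\,\delta_{n,m}\otimes v_n x v_m^*$, satisfying $\theta(t)V=V\Delta_\vphi^{it}$, and then derives part~(2) from part~(1) by taking $v_n=v w_n$ with a single $\gamma$, whereas you prove the two parts separately via the corner reductions.
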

\begin{proof}
1.\ We apply point 5 of Lemma \ref{lem.HSV}. We thus find a sequence of nonzero projections $p_n \in M^\vphi$, partial isometries $v_n \in M$ and $\gamma_n > 0$ such that $\sum_n p_n = 1$, $v_n^* v_n = p_n$ and
$$\gamma_n^{it} \, [D\om:D\vphi]_t \, \si^\vphi_t(v_n) = v_n \quad\text{for all $n$ and all $t \in \R$.}$$
Define $H = \ell^2(\N^2) \ot L^2(M,\om)$ and consider the unitary representation
$$\theta : \R \to \cU(H) : \theta(t)(\delta_{n,m} \ot \xi) = (\gamma_n / \gamma_m)^{it} \, \delta_{n,m} \ot \Delta_\om^{it} \xi \; .$$
Then,
$$V : L^2(M,\vphi) \to H : V(x) = \sum_{n,m} \gamma_m^{1/2} \, \delta_{n,m} \ot v_n x v_m^*$$
is a well defined isometry satisfying $\theta(t) V = V \Delta_\vphi^{it}$ for all $t \in \R$. Hence, $(\Delta_\vphi^{it})_{t \in \R}$ is unitarily equivalent with a subrepresentation of $\theta$. Choosing an atomic probability measure $\rho$ on $\R$ with atoms in the points $\log \gamma_n - \log \gamma_m$, it follows that $\class(\vphi) \prec \rho \ast \class(\om)$.

2.\ We apply point 2 of Lemma \ref{lem.HSV}. Take a  nonzero projections $p \in M^\vphi$, a partial isometry $v \in M$ and $\gamma > 0$ such that $v^* v = p$ and $\gamma^{it} \, [D\om:D\vphi]_t \, \si^\vphi_t(v) = v$ for all $t \in \R$. Since $M^\vphi$ is a factor, we can choose partial isometries $w_n \in M^\vphi$ with $w_n w_n^* \leq p$ and $\sum_n w_n^* w_n = 1$. We can then apply the proof of the first point to the partial isometries $v w_n$, with $\gamma_n = \gamma$ for all $n$. The conclusion then becomes $\class(\vphi) \prec \class(\om)$.
\end{proof}


For later purposes, we prove the following rather specific and technical variant of the second point in Proposition \ref{prop.class}.

\begin{lemma}\label{lem.class}
Let $N$ be a von Neumann algebra with von Neumann subalgebra $M \subset N$ and faithful normal conditional expectation $E : N \to M$. Let $\om_0$ and $\vphi_0$ be faithful normal states on $M$. Write $\om = \om_0 \circ E$ and $\vphi = \vphi_0 \circ E$. Assume that there exists a subset $\cG \subset \cU(N)$ such that $u^* \si^\om_t(u) \in M$ for all $t \in \R$, $u \in \cG$ and such that the linear span of $\cG M$ is dense in $L^2(N,\om)$.

If $\vphi \prec \om$ and if $M^{\vphi_0}$ is a factor, there exists $u \in \cG$ such that
$$\class(\vphi_0) \prec \class((\om \circ \Ad u)|_M) \prec \class(\om) \; .$$
\end{lemma}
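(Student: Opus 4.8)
The plan is to establish the two displayed inclusions separately and chain them. Fix some notation: for a unitary $u \in \cU(N)$ with $u^*\si^\om_t(u) \in M$ for all $t$, write $w_t := u^*\si^\om_t(u) \in M$, $\psi_0^u := (\om\circ\Ad u)|_M$, and put $c_t := [D\om_0:D\vphi_0]_t \in \cU(M)$. I will show that $\class(\psi_0^u) \prec \class(\om)$ holds for \emph{every} $u \in \cG$, and that $\class(\vphi_0) \prec \class(\psi_0^u)$ holds for \emph{at least one} $u \in \cG$.

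For the right-hand inclusion, the relevant structure comes from $\om = \om_0\circ E$: the algebra $M$ is globally invariant under $\si^\om$, with $\si^\om_t|_M = \si^{\om_0}_t$ and $E\circ\si^\om_t = \si^{\om_0}_t\circ E$, and $E$ is an $M$-bimodule map. For $u\in\cG$ a one-line computation gives $\si^{\om\circ\Ad u}_t = \Ad(w_t)\circ\si^\om_t$, so $M$ is globally $\si^{\om\circ\Ad u}$-invariant as well; by Takesaki's theorem there is an $(\om\circ\Ad u)$-preserving conditional expectation $N\to M$, and restricting the modular group we get $[D\psi_0^u:D\om_0]_t = w_t$. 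The Jones projection of this expectation commutes with $\Delta_{\om\circ\Ad u}^{it}$ and compresses it onto $\Delta_{\psi_0^u}^{it}$, so $\class(\psi_0^u) \prec \class(\om\circ\Ad u) = \class(\om)$, the last equality holding because $\Ad u$ is an automorphism. This takes care of all $u\in\cG$.

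For the left-hand inclusion, apply point 2 of Lemma \ref{lem.HSV} to $\vphi\prec\om$: there are a nonzero partial isometry $v\in N$ and $\gamma>0$ with $\gamma^{it}[D\om:D\vphi]_t\,\si^\vphi_t(v) = v$. Since $\vphi = \vphi_0\circ E$ and $\om = \om_0\circ E$ we have $[D\om:D\vphi]_t = c_t$ and $\si^\vphi_t = \Ad(c_t^*)\circ\si^\om_t$, so this becomes $\gamma^{it}\si^\om_t(v)c_t = v$. As the linear span of $\cG M$ is dense in $L^2(N,\om)$ and $v\neq 0$, there is $u\in\cG$ with $z := E(u^*v)\neq 0$ (otherwise $v$ would be orthogonal to all of $\cG M$). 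Using $\si^\om_t(u) = uw_t$, the relation for $v$ gives $\si^\om_t(u^*v) = \gamma^{-it}w_t^*(u^*v)c_t^*$; applying the $M$-bimodule map $E$ and using $E\si^\om_t = \si^{\om_0}_t E$ yields $z = \gamma^{it}\,w_t\,\si^{\om_0}_t(z)\,c_t$. Since $[D\psi_0^u:D\vphi_0]_t = w_t c_t$ and $\si^{\vphi_0}_t = \Ad(c_t^*)\circ\si^{\om_0}_t$, this is exactly $\gamma^{it}[D\psi_0^u:D\vphi_0]_t\,\si^{\vphi_0}_t(z) = z$. Passing to the polar decomposition $z = v'|z|$, the standard argument (the relation forces $z^*z\in M^{\vphi_0}$ and $zz^*\in M^{\psi_0^u}$, hence passes to $v'$) shows that the nonzero partial isometry $v'\in M$ satisfies the same relation, so $\vphi_0\prec\psi_0^u$ by Lemma \ref{lem.HSV}. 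Because $M^{\vphi_0}$ is a factor, the second point of Proposition \ref{prop.class} improves this to $\class(\vphi_0)\prec\class(\psi_0^u)$; combined with the previous paragraph this gives $\class(\vphi_0)\prec\class(\psi_0^u)\prec\class(\om)$ for this $u$.

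I expect the one genuinely non-mechanical step to be the passage from $v$ to $z = E(u^*v)$: it succeeds precisely because both cocycles $c_t$ and $w_t = u^*\si^\om_t(u)$ lie inside $M$ and hence survive the bimodule map $E$, and because $E$ commutes with the modular flow in the way recorded above. The rest is Connes-cocycle bookkeeping, together with the elementary density/orthogonality argument that selects a good $u$ and the (standard) verification that the polar part of $z$ still intertwines, so that Lemma \ref{lem.HSV} and then the second point of Proposition \ref{prop.class} (where factoriality of $M^{\vphi_0}$ is used) apply.
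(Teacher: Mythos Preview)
Your proof is correct and follows essentially the same route as the paper's: extract $v$ and $\gamma$ from Lemma~\ref{lem.HSV}, pick $u\in\cG$ with $E(u^*v)\neq 0$ by density, push the intertwining relation through $E$ using that both cocycles $w_t=u^*\si^\om_t(u)$ and $c_t=[D\om_0:D\vphi_0]_t$ lie in $M$, pass to the polar part, and conclude with point~2 of Proposition~\ref{prop.class}. The only cosmetic difference is that you separate the two inclusions (proving $\class(\psi_0^u)\prec\class(\om)$ for every $u\in\cG$ via Takesaki and the Jones projection) before turning to the left inclusion, while the paper handles both in one pass; the underlying computation is the same.
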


\begin{proof}
By Lemma \ref{lem.HSV}, we find a nonzero element $v \in N$ and $\gamma > 0$ such that
\begin{equation}\label{eq.my-eq1}
\gamma^{it} \, \si^\om_t(v) \, [D\om:D\vphi]_t = \gamma^{it} \, [D\om:D\vphi]_t \, \si^\vphi_t(v) = v \quad\text{for all $t \in \R$.}
\end{equation}
Since the linear span of $\cG M$ is dense in $L^2(N,\om)$, we can choose $u \in \cG$ such that $E(u^* v) \neq 0$.

Denote $a_t := u^* \si^\om_t(u) \in \cU(M)$. Define the faithful normal state $\psi = \om \circ \Ad u$ on $N$. By construction, $[D\psi : D \om]_t = a_t$. Since $a_t \in M$, we get that $\psi = \psi_0 \circ E$, where $\psi_0$ is defined as the restriction of $\psi$ to $M$. We have to prove that $\class(\vphi_0) \prec \class(\psi_0)$.

For every $x \in N$ and $t \in \R$, we have
\begin{align*}
\si_t^\psi(x) \, [D \psi : D \vphi]_t &= u^* \si^\om_t(u x u^*) u \, [D \psi : D \vphi]_t = a_t \, \si^\om_t(x) \, [D\om : D\psi]_t \, [D \psi : D \vphi]_t \\ &= a_t \, \si_t^\om(x) \, [D \om : D \vphi]_t \; .
\end{align*}
Applying this to $x = E(u^* v)$, using \eqref{eq.my-eq1} and using that $[D \psi : D \vphi]_t = [D\psi_0 : D\vphi_0]_t \in M$ and $[D \om : D \vphi]_t = [D \om_0 : D\vphi_0]_t \in M$, we get that
\begin{align*}
\gamma^{it} \, \si_t^{\psi_0}(x) \, [D \psi_0 : D \vphi_0]_t & = \gamma^{it} \, a_t \, \si_t^\om(E(u^* v)) \, [D \om : D \vphi]_t = a_t \, E\bigl( \si_t^\om(u)^* \, \gamma^{it} \,  \si_t^\om(v) \, [D \om : D \vphi]_t\bigr)\\
& = a_t \, E(a_t^* \, u^* v) = E(u^* v) = x \; .
\end{align*}
Defining the partial isometry $w \in M$ as the polar part of $x$, we still have
$$\gamma^{it} \, \si_t^{\psi_0}(w) \, [D \psi_0 : D \vphi_0]_t = w \; .$$
Since $M^{\vphi_0}$ is a factor, it then follows from the second point of Proposition \ref{prop.class} that $\class(\vphi_0) \prec \class(\psi_0)$. Since $\psi = \psi_0 \circ E$, we have that $\class(\psi_0) \prec \class(\psi)$. Since $\psi = \om \circ \Ad u$, the modular operators of $\psi$ and $\om$ are unitarily equivalent, so that $\class(\psi) = \class(\om)$.
\end{proof}

\section{Proof of Theorems \ref{thm.main} and \ref{thm.main2}}

We deduce Theorems \ref{thm.main} and \ref{thm.main2} from a more general rigidity result for certain nonsingular coinduced actions.

Let $G$ be a countable amenable group and let $G \actson^\al (Z,\zeta)$ be any nonsingular action on a nontrivial standard probability space. Given any countable infinite group $\Lambda$, we consider the free product $\Gamma = G \ast \Lambda$ and the countable set $I = (G \ast \Lambda)/G$ with base element $i_0 = e G$. We then define $(X,\mu) = (Z,\zeta)^I$ and the nonsingular action $\Gamma \actson (X,\mu)$ by
\begin{equation}\label{eq.more-general-nonsingular}
\begin{split}
&(g \cdot x)_{i} = x_{g^{-1} \cdot i} \;\;\text{when $g \in \Lambda$,}\\
& (g \cdot x)_{i} = x_{g^{-1} \cdot i} \;\;\text{when $i \neq i_0$ and $g \in G$, and}\quad (g \cdot x)_{i_0} = g \cdot x_{i_0}\;\;\text{when $g \in G$.}
\end{split}
\end{equation}
So, $\Lambda$ acts as a generalized Bernoulli action on $X = Z^I$, while the action of $G$ is the diagonal product of a generalized Bernoulli action on $Z^{I \setminus \{i_0\}}$ and the given action $G \actson^\al Z$, viewed as the $i_0$-coordinate.

Note that the nonsingular Bernoulli action in \eqref{eq.my-bernoulli} arises as a special case of \eqref{eq.more-general-nonsingular} by taking $G = \Z$ and $\Z \actson (Z,\zeta) = \prod_{n \in \Z} (Y,\mu_n)$ by Bernoulli shift.

When $i \neq j$ are distinct elements, the stabilizer $\Stab \{i,j\}$ is trivial. It follows that $\Gamma \actson (X,\mu)$ is essentially free. The action of $\Lambda$ is measure preserving and ergodic. It follows that $\Gamma \actson (X,\mu)$ is ergodic, with the Krieger type being determined in the following way: if $\zeta$ is $G$-invariant, then $\Gamma \actson (X,\mu)$ is of type II$_1$. In all other cases, $\Gamma \actson (X,\mu)$ is of type III.

We associate to $G \actson^\al (Z,\zeta)$ the stable measure class $\stc(\al)$ defined as the smallest stable measure class such that $(\log d(g \cdot \zeta)/ d\zeta)_*(\zeta) \prec \stc(\al)$ for all $g \in G$. Note that $\Gamma \actson (X,\mu)$ is of type III$_\lambda$ if and only if $\stc(\al)$ is equivalent with the counting measure on $\Z \log \lambda$. Otherwise, $\Gamma \actson (X,\mu)$ is of type III$_1$.

Theorems \ref{thm.main} and \ref{thm.main2} will be deduced from the following more general result.

\begin{theorem}\label{thm.main3}
For $i \in \{1,2\}$, let $G_i$ be countable amenable groups with nonsingular actions $G_i \actson^{\al_i} (Z_i,\zeta_i)$ on nontrivial standard probability spaces. Let $\Lambda_i$ be nonamenable groups. Put $\Gamma_i = G_i \ast \Lambda_i$ and define $\Gamma_i \actson (X_i,\mu_i)$ by \eqref{eq.more-general-nonsingular}. Write $M_i = L^\infty(X_i,\mu_i) \rtimes \Gamma_i$.
\begin{enumlist}
\item If $M_1 \cong M_2$, then $\stc(\al_1) \sim \stc(\al_2)$.
\item If $M_1$ embeds with expectation into $M_2$, there exists an atomic probability measure $\rho$ on $\R$ such that $\stc(\al_1) \prec \rho \ast \stc(\al_2)$.
\end{enumlist}
\end{theorem}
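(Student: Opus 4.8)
The plan is to exploit the structure of $M_i$ as a crossed product $L^\infty(X_i,\mu_i)\rtimes\Gamma_i$ where $\Gamma_i = G_i \ast \Lambda_i$, with the $\Lambda_i$-part of the action being state preserving. The key point is that $L(\Lambda_i) \subset M_i$ has a nonamenable centralizer coming from the fact that $\Lambda_i$ is nonamenable, while the modular theory of the natural state $\vphi_{\mu_i}$ on $M_i$ encodes $\stc(\al_i)$: more precisely, one expects $\class(\vphi_{\mu_i}) \sim \stc(\al_i)$, because the Radon--Nikodym cocycle of the coinduced action is built, coordinate by coordinate, from the cocycles $\log d(g\cdot\zeta_i)/d\zeta_i$, and the modular operator of an infinite tensor product state is the (closure of the) sum of the coordinate modular operators, whose spectral measure class is the join of all convolution powers --- exactly $\stc(\al_i)$.

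For the embedding statement (2), suppose $\pi : M_1 \to M_2$ is a faithful normal $*$-homomorphism with faithful normal expectation $E_2 : M_2 \to \pi(M_1)$. First I would transport the canonical state: set $\psi = \vphi_{\mu_1}\circ\pi^{-1}\circ E_2$, a faithful normal state on $M_2$ whose restriction to $\pi(M_1)$ is $\vphi_{\mu_1}\circ\pi^{-1}$, so that $\class(\psi|_{\pi(M_1)}) = \class(\vphi_{\mu_1}) \sim \stc(\al_1)$. The centralizer $M_2^\psi$ contains $\pi(L(\Lambda_1))$-type elements and is therefore nonamenable. Now I would run the deformation/rigidity machinery of Theorem \ref{thm.def-rig-nc-bernoulli} and Proposition \ref{prop.control-normalizer} inside $M_2$: because $\Gamma_2 = G_2\ast\Lambda_2$ acts by a generalized Bernoulli action with amenable stabilizers of singletons and finite stabilizers of pairs, a subalgebra of $M_2$ with nonamenable relative commutant, together with its normalizer, must be controlled by $L_{\om}(\R)\vee L(\Gamma_2)$, and then further, using that $L(\Lambda_2)$ is nonamenable and the relative solidity built into the coinduced structure, one locates the image of the relevant centralizer piece inside (a corner of) $L^\infty(Z_2,\zeta_2)\rtimes G_2$ together with the core, i.e.\ inside the part of $M_2$ that carries the cocycle of $\al_2$. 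This is the step that identifies $\psi$ (up to corners and unitary conjugacy) with the canonical state $\vphi_{\mu_2}$ on that corner, which via Lemma \ref{lem.HSV} and Proposition \ref{prop.class}(1) yields $\class(\psi|_{\pi(M_1)}) \prec \rho \ast \class(\vphi_{\mu_2})$ for some atomic $\rho$. Since $\class(\vphi_{\mu_2})\sim\stc(\al_2)$ and, crucially, $\stc(\al_2)$ is a \emph{stable} measure class (so it absorbs translates by atomic measures: $\rho\ast\stc(\al_2)\sim\stc(\al_2)$ whenever $\rho$ is atomic with atoms among the relevant points), we conclude $\stc(\al_1)\prec\rho\ast\stc(\al_2)$, which is (2). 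For (1), an isomorphism gives embeddings with expectation in both directions, so $\stc(\al_1)\prec\rho_1\ast\stc(\al_2)$ and $\stc(\al_2)\prec\rho_2\ast\stc(\al_1)$; applying $\stc$ and using stability of both classes collapses the atomic factors and gives $\stc(\al_1)\sim\stc(\al_2)$.

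The part of the argument requiring the most care --- and where I expect the main obstacle --- is the "state-recognition" step: showing that any faithful normal state on $M_2$ with nonamenable centralizer has a corner unitarily conjugate to a corner of the canonical state $\vphi_{\mu_2}$, or at least enough of this to pin down the spectral measure class up to an atomic convolution factor. The crossed product state $\vphi_{\mu_2}$ need not itself be a solid state in general (as the introduction warns, via Example \ref{ex.not-solid}), so one cannot simply invoke a clean solidity statement; instead one must carefully combine the intertwining analysis of Theorem \ref{thm.def-rig-nc-bernoulli} with an argument, in the spirit of Lemma \ref{lem.class}, that converts "$L_\psi(\R)\prec L_{\vphi_{\mu_2}}(\R)$ inside $\core(M_2)$" plus factoriality of the relevant centralizer into the measure-class domination $\class(\psi)\prec\rho\ast\class(\vphi_{\mu_2})$ --- handling the subtlety that the intertwiner produced by Popa's technique only lives in a suitable corner and may not be unitary, which is precisely why the conclusion carries the extra atomic measure $\rho$ in part (2). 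A secondary technical point is verifying that $M_i^{\vphi_{\mu_i}}$, or the relevant sub-centralizer, is a factor (needed to pass from Proposition \ref{prop.class}(1) to the sharper (2) in the isomorphism case), which uses that $\Lambda_i$ acts weakly mixingly and ergodically on the state-preserving part.
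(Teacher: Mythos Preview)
Your outline for part (2) is essentially correct, though you leave implicit the key maneuver: the deformation/rigidity of Theorem \ref{thm.def-rig-nc-bernoulli} does not apply directly to $M_2$, because $\Gamma_2 \actson (X_2,\mu_2)$ is a \emph{nonsingular} action, not a state-preserving generalized Bernoulli action. The paper first uses Lemma \ref{lem.canonical-embedding} to embed $M_2$ state-preservingly into the genuine noncommutative Bernoulli crossed product $N_2 = (P_2,\om_2)^{I_2}\rtimes\Gamma_2$ with $P_2 = L^\infty(Z_2,\zeta_2)\rtimes G_2$, and then applies Proposition \ref{prop.main-technical} (which packages Theorem \ref{thm.def-rig-nc-bernoulli} and Proposition \ref{prop.control-normalizer}) to the composite embedding $M_1 \hookrightarrow N_2$ to obtain $\vphi_1\circ\pi^{-1}\circ E \prec_f \om_2$. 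Your sketch hints at $L^\infty(Z_2)\rtimes G_2$ but does not make this passage explicit.

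The real gap is your argument for part (1). You propose to apply part (2) in both directions and then claim that ``stability collapses the atomic factors''. This is false: a stable measure class $\gamma$ satisfies $\gamma\ast\gamma\sim\gamma$, $\widetilde\gamma\sim\gamma$ and $\delta_0\prec\gamma$, but there is no reason whatsoever that $\rho\ast\gamma\sim\gamma$ for an arbitrary atomic $\rho$. For a concrete counterexample, take $\gamma_1$ the counting measure on $\Z$ and $\gamma_2$ the counting measure on $\sqrt{2}\,\Z$: both are stable, one has $\gamma_1\prec\rho_1\ast\gamma_2$ and $\gamma_2\prec\rho_2\ast\gamma_1$ for suitable atomic $\rho_i$, yet $\gamma_1\not\sim\gamma_2$. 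So the symmetric application of (2) cannot yield (1).

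The paper obtains (1) by a genuinely different route: when $\pi$ is an isomorphism one has $\pi(M_1)=M_2$, and the embedding $M_2\subset N_2$ has the special feature that $N_2 = L^\infty(X_2)\rtimes\cG_2$ for the wreath product $\cG_2 = G_2\wr_{I_2}\Gamma_2$, with $u_g^*\,\si^{\om_2}_t(u_g)\in L^\infty(X_2)\subset M_2$ for every $g\in\cG_2$ and with the span of $u_g M_2$ dense in $L^2(N_2,\om_2)$. These are exactly the hypotheses of Lemma \ref{lem.class}, which --- together with the factoriality of $M_1^{\vphi_1}$ that you correctly identify --- upgrades $\vphi_1\circ\pi^{-1}\circ E \prec \om_2$ directly to $\class(\vphi_1)\prec\class(\om_2)$ \emph{without} any atomic correction. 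You mention Lemma \ref{lem.class} in your last paragraph as relevant to the ``state-recognition'' step, but you do not see that it is precisely this lemma (and not a symmetry argument) that eliminates $\rho$ in the isomorphism case.
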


We prove Theorem \ref{thm.main3} by combining several ingredients: we first prove how the crossed product of an action of the form \eqref{eq.more-general-nonsingular} can be embedded in a state preserving way into a generalized Bernoulli crossed product as studied in Theorem \ref{thm.def-rig-nc-bernoulli}. We then combine Theorem \ref{thm.def-rig-nc-bernoulli} with the results of Section \ref{sec.measure-classes} to reach the conclusions of Theorem \ref{thm.main3}.


\begin{lemma}\label{lem.canonical-embedding}
Let $\Gamma \actson (X,\mu)$ be defined by \eqref{eq.more-general-nonsingular}. Define $P = L^\infty(Z,\zeta) \rtimes G$ and denote by $\om$ the canonical crossed product state on $P$. There is a canonical state preserving embedding $\psi$ of $L^\infty(X,\mu) \rtimes \Gamma$ into the generalized Bernoulli crossed product $(P,\om)^I \rtimes \Gamma$, and there is a state preserving conditional expectation of $(P,\om)^I \rtimes \Gamma$ onto the image of $\psi$.
\end{lemma}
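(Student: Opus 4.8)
The key point is that the action $G \actson^\alpha (Z,\zeta)$ can itself be realized ``inside'' a copy of the crossed product $P = L^\infty(Z,\zeta) \rtimes G$, because $L^\infty(Z,\zeta) \subset P$ with the canonical state preserving conditional expectation given by $\om \mapsto \om_\zeta$ on $L^\infty(Z)$, and the action of $G$ on $L^\infty(Z)$ is implemented by the canonical unitaries $(u_g)_{g \in G}$ in $P$, so that $\Ad u_g$ restricts to the given $G$-action on $L^\infty(Z)$. The plan is to build $\psi$ from this observation as follows. Write the target Bernoulli crossed product as $(P,\om)^I \rtimes \Gamma$, with $I = \Gamma/G$ and base point $i_0 = eG$, and denote by $(U_g)_{g \in \Gamma}$ its canonical group unitaries and by $\pi_i : P \to (P,\om)^I$ the inclusion of the $i$-th tensor factor.

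First I would define $\psi$ on the function algebra $L^\infty(X,\mu) = L^\infty(Z,\zeta)^I$: for a coordinate $i \neq i_0$, send $L^\infty(Z)_i$ via $\pi_i$ into the $i$-th copy of $L^\infty(Z) \subset P$; for the coordinate $i_0$, send $L^\infty(Z)_{i_0}$ via $\pi_{i_0}$ into $L^\infty(Z) \subset P$ sitting in the $i_0$-th tensor factor. Next I would define $\psi$ on the group unitaries: for $g \in \Lambda$ put $\psi(u_g) = U_g$, and for $g \in G$ put $\psi(u_g) = U_g \, \pi_{i_0}(u_g^{(P)})$, where $u_g^{(P)} \in P$ is the canonical unitary of the crossed product $P$. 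The point of the correction factor $\pi_{i_0}(u_g^{(P)})$ is precisely to turn the Bernoulli shift action of $G$ on the $i_0$-coordinate (which permutes $i_0$ back to itself, hence acts trivially there) into the genuine action $G \actson^\alpha Z$ on that coordinate, matching \eqref{eq.more-general-nonsingular}; on the other coordinates $i \neq i_0$ the shift $g \cdot i$ already does the right thing, and one checks that $U_g$ conjugates $\pi_i(L^\infty(Z))$ to $\pi_{g \cdot i}(L^\infty(Z))$ as required. The main verification is then that these assignments are compatible with the crossed product relations — i.e.\ $\psi(u_g) \psi(a) \psi(u_g)^* = \psi(g \cdot a)$ for $a \in L^\infty(X)$ — and that $\psi$ extends to a normal $*$-homomorphism; this is a direct computation using that $U_g \pi_i = \pi_{g \cdot i} U_g$ on $(P,\om)^I$ and that $u_g^{(P)}$ implements $\alpha_g$ on $L^\infty(Z) \subset P$. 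One also checks $\psi$ is state preserving: $\om^{\ot I} \rtimes \Gamma$ restricts on $L^\infty(Z)_i$ to $\zeta$ (since $\om|_{L^\infty(Z)} = \zeta$), matching $\mu = \zeta^{\ot I}$, and $\psi$ sends the canonical unitaries into the centralizer of the Bernoulli state as needed.

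For the conditional expectation, I would identify the image $\psi(M)$ concretely. The range of $\psi$ is generated by $\pi_i(L^\infty(Z))$ for all $i$ together with the $U_g$ ($g \in \Lambda$) and the $U_g \pi_{i_0}(u_g^{(P)})$ ($g \in G$); equivalently, it is the crossed product of $\bigotimes_{i \neq i_0} \pi_i(L^\infty(Z)) \ot \pi_{i_0}(P)$ — no, more precisely, of the subalgebra $\bigl(\bigotimes_{i} \pi_i(L^\infty(Z))\bigr) \vee \pi_{i_0}(P)$ — by $\Gamma$. Since $L^\infty(Z) \subset P$ is with a state preserving conditional expectation, the tensor factor $\pi_{i_0}(P)$ admits the state preserving conditional expectation $\pi_{i_0}(E_{L^\infty(Z)})$ onto $\pi_{i_0}(L^\infty(Z))$, and tensoring with the identity on the other factors and then extending over the $\Gamma$-action (which is possible because the $\Gamma$-action permutes the tensor factors compatibly, after absorbing the $\pi_{i_0}(u_g^{(P)})$ correction) yields a state preserving conditional expectation $(P,\om)^I \rtimes \Gamma \to \psi(M)$.

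The step I expect to be the main obstacle is the last one: checking that the slice-map-type conditional expectation on the $i_0$-factor is genuinely $\psi(M)$-bimodular after twisting by the unitaries $U_g \pi_{i_0}(u_g^{(P)})$ for $g \in G$. This requires verifying that $\Ad(U_g \pi_{i_0}(u_g^{(P)}))$ maps the big algebra $\bigl(\bigotimes_i \pi_i(L^\infty(Z))\bigr) \vee \pi_{i_0}(P)$ into itself and intertwines the candidate expectation — i.e.\ that $\pi_{i_0}(E_{L^\infty(Z)})$ commutes with $\Ad(u_g^{(P)})$, which in turn is exactly the statement that the conditional expectation $E_{L^\infty(Z)} : P \to L^\infty(Z)$ is $G$-equivariant for the action $\Ad u_g^{(P)}$ — a standard fact about crossed products. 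Everything else is bookkeeping with generalized Bernoulli actions and the canonical unitaries, of the kind already invoked in the Preliminaries.
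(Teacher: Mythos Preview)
Your construction of the embedding $\psi$ is correct and essentially identical to the paper's: the paper writes $\psi(u_h) = \pi_0(v_h)\, u_h$ for $h \in G$ (with $v_h$ the canonical unitaries of $P$), which equals your $U_h \, \pi_{i_0}(u_h^{(P)})$ since $U_h$ commutes with $\pi_{i_0}(P)$ for $h \in G$. Your state-preservation check is also fine.

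The gap is in the conditional expectation. Your description of $\psi(M)$ as a crossed product of $B := \bigl(\bigotimes_{i} \pi_i(L^\infty(Z))\bigr) \vee \pi_{i_0}(P)$ by $\Gamma$ is incorrect on two counts. First, $\psi(M)$ does not contain $\pi_{i_0}(P)$: it contains the twisted unitaries $U_g \pi_{i_0}(u_g^{(P)})$ but not $U_g$ or $\pi_{i_0}(u_g^{(P)})$ separately, so in fact $\psi(M) \cap (P,\om)^I = L^\infty(X)$ only. Second, $B$ is not globally invariant under $\Ad \psi(u_\lambda) = \Ad U_\lambda$ for $\lambda \in \Lambda \setminus \{e\}$, since $U_\lambda \, \pi_{i_0}(P) \, U_\lambda^* = \pi_{\lambda \cdot i_0}(P) \not\subset B$; hence there is no crossed product $B \rtimes \Gamma$ to speak of. Your slice-map expectation (applying $E_{L^\infty(Z)}$ only at the $i_0$-slot and leaving the other copies of $P$ alone) therefore does not land in $\psi(M)$ under any reading.

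The paper bypasses all of this with a one-line modular argument: one checks on generators that $\psi \circ \si_t^{\vphi} = \si_t^{\om} \circ \psi$ for all $t \in \R$ (the only nontrivial case being $g \in G$, where both sides acquire the factor $\pi_{i_0}\bigl((d(g\cdot\zeta)/d\zeta)^{it}\bigr)$). Thus $\psi(M)$ is globally $\si^\om$-invariant, and Takesaki's theorem immediately provides the $\om$-preserving conditional expectation onto $\psi(M)$. If you prefer an explicit formula, the clean route (also used later in the paper) is to identify $(P,\om)^I \rtimes \Gamma$ with $L^\infty(X) \rtimes (G \wr_I \Gamma)$ and realize $\psi(M)$ as $L^\infty(X) \rtimes \theta(\Gamma)$ for the natural injection $\theta : \Gamma \to G \wr_I \Gamma$; the expectation is then the standard one coming from the subgroup inclusion $\theta(\Gamma) < G \wr_I \Gamma$.
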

\begin{proof}
We denote by $(u_g)_{g \in \Gamma}$ the canonical unitary operators in the crossed products $L^\infty(X)\rtimes \Gamma$ and $(P,\om)^I \rtimes \Gamma$. We denote by $(v_g)_{g \in G}$ the canonical unitary operators in the crossed product $L^\infty(Z) \rtimes G = P$. We denote by $\pi_0 : (P,\om) \to (P,\om)^I$ the canonical embedding in coordinate $i_0 \in I$. Note that $\pi_0(P)$ commutes with $(u_g)_{g \in G}$ inside $(P,\om)^I \rtimes \Gamma$. We denote by $\vphi$ the crossed product state on $L^\infty(X,\mu) \rtimes \Gamma$. We still denote by $\om$ the natural state on $(P,\om)^I \rtimes \Gamma$.

We can then define the state preserving embedding
$$\psi : L^\infty(X) \rtimes \Gamma \to (P,\om)^I \rtimes \Gamma$$
such that the restriction of $\psi$ to $L^\infty(X)$ is the canonical embedding of $L^\infty(X,\mu) = L^\infty((Z,\zeta)^I)$ into $(P,\om)^I$ and such that
$$\psi(u_g) = u_g \;\;\text{for all $g \in \Lambda$, and}\quad \psi(u_h) = \pi_0(v_h) \, u_h \;\;\text{for all $h \in G$.}$$
By construction, $\psi \circ \si_t^\vphi = \si_t^\om \circ \psi$. There thus exists a state preserving conditional expectation of $(P,\om)^I \rtimes \Gamma$ onto the image of $\psi$.
\end{proof}

By Lemma \ref{lem.canonical-embedding} any embedding with expectation of a crossed product von Neumann algebra $N$ into $L^\infty(X,\mu) \rtimes \Gamma$ will induce an embedding with expectation of $N$ into $(P,\om)^I \rtimes \Gamma$. As a preparation to prove Theorem \ref{thm.main3}, we thus prove a general rigidity result for such embeddings into generalized Bernoulli crossed products $(P,\om)^I \rtimes \Gamma$. We actually prove a very general result of this kind, which is of independent interest.

Let $\cR$ be a nonsingular countable equivalence relation on a standard probability space $(X,\mu)$. In the context of the discussion above, $\cR$ would be the orbit equivalence relation of a nonsingular action of the form \eqref{eq.more-general-nonsingular}, but we prove results for arbitrary equivalence relations $\cR$. We denote by $\vphi_\mu$ the canonical faithful normal state on the von Neumann algebra $L(\cR)$. We assume that the centralizer $L(\cR)^{\vphi_\mu}$ is large, in the sense that it has no amenable direct summand. We prove that if $L(\cR)$ embeds with expectation $E$ into a noncommutative Bernoulli crossed product $(M,\om)$ satisfying the appropriate conditions, then automatically $\vphi_\mu \circ E \prec_f \om$, using the notation of Lemma \ref{lem.HSV}.

Recall that a countable nonsingular equivalence relation $\cR$ on a standard probability space $(X,\mu)$ is said to be purely infinite if for every nonnegligible Borel set $\cU \subset X$, the restriction $\cR|_\cU$ does not admit an $\cR$-invariant probability measure that is equivalent with $\mu$. When $\cR$ is ergodic, this is equivalent to saying that $\cR$ is of type III.


\begin{proposition}\label{prop.main-technical}
Let $\cR$ be a countable nonsingular equivalence relation on the standard probability space $(X,\mu)$. Assume that $\cR$ is purely infinite and that the centralizer $L(\cR)^{\vphi_\mu}$ has no amenable direct summand.

Let $(M,\om)$ be the noncommutative generalized Bernoulli crossed product $M = (P,\om)^I \rtimes \Gamma$, where $P$ is amenable, $\om$ is a faithful normal state on $P$, $\Gamma$ is any countable group and the action $\Gamma \actson I$ has the properties that $\Stab(i)$ is amenable for all $i \in I$ and that there exists a $\kappa \in \N$ such that $\Stab(J)$ is finite whenever $J \subset I$ satisfies $|J| \geq \kappa$.

If $\pi : L(\cR) \to M$ is an embedding of $L(\cR)$ as a von Neumann subalgebra of $M$ admitting a faithful normal conditional expectation $E : M \to \pi(L(\cR))$, then $\vphi_\mu \circ \pi^{-1} \circ E \prec_f \om$.
\end{proposition}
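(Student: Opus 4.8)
The plan is to show that $\pi(L(\cR))$, equipped with the pulled-back state, must "intertwine into" the arithmetic part $L_\om(\R)$ of the continuous core, and then invoke Lemma~\ref{lem.HSV}. First I would pass to continuous cores: write $N = L(\cR)$, $\psi = \vphi_\mu \circ \pi^{-1} \circ E$, and note that $\pi$ together with $E$ gives a canonical trace-preserving embedding $\core(N) \hookrightarrow \core(M)$, where $\core(M) = \core(P)^I \rtimes \Gamma$ carries the canonical semifinite trace $\Tr$. Inside $\core(N)$ I want to locate a von Neumann subalgebra $A$ playing the role of the ``rigid'' piece. The natural candidate is a maximal abelian (or simply a diffuse) subalgebra of the centralizer $N^{\vphi_\mu}$ whose relative commutant in $\core(N)$ captures the whole of $N$; more precisely, since $N^{\vphi_\mu}$ has no amenable direct summand, I would take $A \subset L_{\vphi_\mu}(\R)' \cap \core(N)$ — essentially $A = N^{\vphi_\mu}$ itself, sitting inside $\core(N) \subset \core(M)$ — cut down by a projection $p$ of finite trace chosen so that $p\,A\,p$ still has no amenable direct summand and so that the normalizer of $A$ inside $p\,\core(M)\,p$ contains a copy of $p\,\core(N)\,p$ (using $N^{\vphi_\mu} \subset \cN(N^{\vphi_\mu})\dpr \supset N$, together with $L_{\vphi_\mu}(\R)$).

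Next I would apply Theorem~\ref{thm.def-rig-nc-bernoulli} to $A \subset p\,\core(M)\,p$. The hypothesis ``$A' \cap p\,\core(M)\,p$ has no amenable direct summand'' is exactly where pure infiniteness and the nonamenability of $L(\cR)^{\vphi_\mu}$ enter: because $\cR$ is purely infinite, the continuous core $\core(N)$ is properly infinite in the relevant sense and one can arrange that the relative commutant of $A$ contains a nonamenable corner coming from $N^{\vphi_\mu}$. Theorem~\ref{thm.def-rig-nc-bernoulli} then produces the maximal central projection $z$ with $A z \prec_{f} L_\om(\R)$, and tells us that on the complement $z' = p - z$ the normalizer $Q = \cN_{p\,\core(M)\,p}(A)\dpr$ satisfies $Q z' \prec_f L_\om(\R) \vee L(\Gamma) = \core(L(\Gamma))$. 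Since $p\,\core(N)\,p \subset Q$, this gives $p\,\core(N)\,p \, z' \prec \core(L(\Gamma))$. The second ingredient is Proposition~\ref{prop.control-normalizer}, which I would use to rule this out: the image of $\core(N)$ inside $\core(M)$ is, after cutting by $z'$, intertwined into $\core(L(\Gamma))$, but $N$ has no amenable piece while the ``small'' pieces $\core(P)^I \rtimes \Stab(i)$ are amenable; chasing through part~2 of Proposition~\ref{prop.control-normalizer} (applied to a diffuse subalgebra of $L(\cR)^{\vphi_\mu}$ transported into $L(\Gamma)$) forces the would-be intertwiner into an amenable algebra, contradicting the no-amenable-direct-summand hypothesis. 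Hence $z' = 0$, i.e.\ $A z = A \prec_f L_\om(\R)$.

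Finally, from $A = N^{\vphi_\mu} \prec_f L_\om(\R)$ I would upgrade to $L_\psi(\R) \prec_{f} L_\om(\R)$ in $\core(M)$: this is where I have to be careful, since $A$ is only the centralizer and not $L_\psi(\R)$ itself. The point is that $L_\psi(\R)$ normalizes $N^{\vphi_\mu}$ (the modular group of $\psi = \vphi_\mu \circ \pi^{-1} \circ E$ fixes the centralizer pointwise up to the cocycle, and $L_\psi(\R) \subset \cN_{\core(N)}(N^{\vphi_\mu})\dpr$), so $A \prec_f L_\om(\R)$ together with the normalizer statement of Theorem~\ref{thm.def-rig-nc-bernoulli} propagates to the normalizer, which contains $L_\psi(\R) \vee N^{\vphi_\mu}$; combined with the fact that the only way the $\Gamma$-direction can appear is already excluded, one concludes $L_\psi(\R) \prec_f L_\om(\R)$, which by Lemma~\ref{lem.HSV} is precisely $\psi \prec_f \om$. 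I expect the main obstacle to be the bookkeeping in this last step — matching up the ``centralizer version'' of intertwining coming out of Theorem~\ref{thm.def-rig-nc-bernoulli} with the ``$L_\psi(\R)$ version'' demanded by Lemma~\ref{lem.HSV} — together with verifying that the relative commutant $A' \cap p\,\core(M)\,p$ genuinely inherits the no-amenable-direct-summand property from $L(\cR)^{\vphi_\mu}$, which is the reason pure infiniteness of $\cR$ is assumed.
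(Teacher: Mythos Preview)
Your choice of $A$ has the roles reversed. Theorem~\ref{thm.def-rig-nc-bernoulli} requires the \emph{relative commutant} $A' \cap p\,\core(M)\,p$ to have no amenable direct summand; with your $A = N^{\vphi_\mu}$ there is nothing nonamenable available in that commutant (for instance, when $N^{\vphi_\mu}$ is a factor, $(N^{\vphi_\mu})' \cap \core(N) = L_{\vphi_\mu}(\R)$ is abelian). Worse, since $L_\om(\R)$ is abelian while $N^{\vphi_\mu}$ has no amenable direct summand, any unital normal $*$-homomorphism from a corner $r\,N^{\vphi_\mu}\,r$ into a corner of $L_\om(\R)$ would exhibit an abelian direct summand of $r\,N^{\vphi_\mu}\,r$, hence a type~I direct summand of $N^{\vphi_\mu}$; thus $N^{\vphi_\mu} \not\prec L_\om(\R)$, the projection $z$ of Theorem~\ref{thm.def-rig-nc-bernoulli} vanishes, and you are permanently stuck in the normalizer branch with no ``upgrade'' available. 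The paper instead takes $A = L_{\vphi_\mu}(\R)\,p$ for an arbitrary finite-trace projection $p \in L_{\vphi_\mu}(\R)$. Then $A' \cap p\,\core(M)\,p$ contains $N^{\vphi_\mu}\,p$, so the nonamenability hypothesis is immediate, and once $z' = 0$ is established, the conclusion $L_{\vphi_\mu}(\R)\,p \prec_f L_\om(\R)$ is, by Lemma~\ref{lem.HSV}, literally the desired statement $\vphi_\mu \circ \pi^{-1} \circ E \prec_f \om$. No upgrade step is needed.

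The argument excluding $z' \neq 0$ is also structurally different from your sketch, and this is where the Cartan subalgebra $L^\infty(X) \subset L(\cR)$, which you do not mention, becomes essential. One observes that $B := (L^\infty(X) \vee L_{\vphi_\mu}(\R))\,p$ commutes with $A$ and hence sits inside $Q$; from $Q \prec \core(L(\Gamma))$ a density argument then gives $L^\infty(X) \prec_M L(\Gamma)$. Choosing an intertwiner $\theta : L^\infty(X)\,e \to q\,L(\Gamma)\,q$ and applying Proposition~\ref{prop.control-normalizer} to $B_2 = \theta(L^\infty(X)\,e) \subset q\,L(\Gamma)\,q$, the fact that $e\,L(\cR)\,e$ lies in the normalizer of $L^\infty(X)\,e$ forces $L(\cR) \prec_M L(\Gamma)$. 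But $L(\Gamma)$ is a finite von Neumann algebra, so this would give $L(\cR)$ a finite direct summand, contradicting pure infiniteness of $\cR$. That is the sole role of pure infiniteness; it has nothing to do with arranging nonamenability of relative commutants.
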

\begin{proof}
We write $\vphi = \vphi_\mu$ and $N = L(\cR)$. We view $N$ as a von Neumann subalgebra of $M$ with the faithful normal conditional expectation $E : M \to N$. We still denote by $\vphi$ the faithful normal state $\vphi \circ E$ on $M$. By Lemma \ref{lem.HSV}, we have to prove that $L_\vphi(\R) \prec_f L_\om(\R)$ inside the continuous core $\core(M)$.

Take an arbitrary nonzero projection $p \in L_\vphi(\R)$ of finite trace and write $A = L_\vphi(\R) p$. We prove that $A \prec_f L_\om(\R)$. Note that $N^\vphi p$ commutes with $A$ and has no amenable direct summand. Denote $Q = A' \cap p \core(M) p$ and let $z \in \cZ(Q)$ be the maximal projection such that $A z \prec_f L_\om(\R)$. Put $z' = p-z$. Assume that $z' \neq 0$. We derive a contradiction. Write $\core(L(\Gamma)) = L_\om(\R) \vee L(\Gamma)$. By Theorem \ref{thm.def-rig-nc-bernoulli}, $Q \prec \core(L(\Gamma))$.

Write $B = (L^\infty(X) \vee L_\vphi(\R)) p$. Since $L^\infty(X)$ commutes with $L_\vphi(\R)$, we have that $B \subset Q$. Thus, $B \prec \core(L(\Gamma))$. We prove that $L^\infty(X) \prec_M L(\Gamma)$. Assume the contrary. Denote by $E_{L(\Gamma)} : M \to L(\Gamma)$ the unique $\om$-preserving conditional expectation. Since $L^\infty(X) \not\prec_M L(\Gamma)$, we can take a sequence of unitaries $w_n \in \cU(L^\infty(X))$ such that $E_{L(\Gamma)}(x^* w_n y) \to 0$ $*$-strongly for all $x,y \in M$. We claim that
$$\bigl\|E_{\core(L(\Gamma))}(x^* w_n  y)\bigr\|_{2,\Tr} \to 0 \quad\text{for all $x,y \in \core(M)$ with $\Tr(x^* x) < +\infty$ and $\Tr(y^* y) < +\infty$.}$$
By density, it suffices to prove this claim for $x = x_1 x_2$ and $y = y_1 y_2$ with $x_1,y_1 \in M$ and $x_2,y_2 \in L_\om(\R)$ with $\Tr(x_2^* x_2) < +\infty$ and $\Tr(y_2^* y_2) < +\infty$. But then,
$$E_{\core(L(\Gamma))}(x^* w_n  y) = x_2^* \, E_{L(\Gamma)}(x_1^* w_n y_1) \, y_2 \; ,$$
so that the claim follows. We find in particular that $\bigl\|E_{\core(L(\Gamma))}(x^* w_n p \,  y)\bigr\|_{2,\Tr} \to 0$ for all $x,y \in \core(M)$. Since $w_n p$ is a sequence of unitaries in $B$, this implies that $B \not\prec \core(L(\Gamma))$, which is a contradiction. So, we have proven that $L^\infty(X) \prec_M L(\Gamma)$.

Take projections $e \in L^\infty(X)$, $q \in L(\Gamma)$, a nonzero partial isometry $v \in e M q$ and a normal unital $*$-homomorphism $\theta : L^\infty(X) e \to q L(\Gamma) q$ such that $a v = v \theta(a)$ for all $a \in L^\infty(X) e$. Write $B_1 = L^\infty(X) e$. Since $v^* v$ commutes with $\theta(B_1)$, also the support projection of $E_{L(\Gamma)}(v^* v)$ commutes with $\theta(B_1)$. We may cut down with this projection and thus assume that the support projection of $E_{L(\Gamma)}(v^* v)$ equals $q$. Next, we may also replace $e$ by the support of the homomorphism $\theta$ and assume that $\theta$ is faithful.

Define $B_2 = \theta(B_1)$ and
$$D_2 = \bigl\{ u \in q M q \bigm| \exists \be \in \Aut(B_2) \; , \; \forall b \in B_2 \; : \; u b = \be(b) u \bigr\}\dpr \; .$$
Define $D_1 = \cN_{eMe}(B_1)\dpr$. Note that $e L(\cR) e \subset D_1$. Whenever $u \in \cU(e M e)$ normalizes $B_1$, we get that $v^* u v \in D_2$. Write $s = v^* v$. Thus, $s \in D_2$ and $v^* D_1 v \subset s D_2 s$. Also, $vv^* \in D_1$. Since $L(\cR)$ has no amenable direct summand and $L(\cR) \subset M$ is with expectation, we conclude that $s D_2 s$ has no amenable direct summand. Let $z \in \cZ(D_2)$ be the central support of $s$ in $D_2$. Then $D_2 z$ has no amenable direct summand. When $r \in B_2' \cap q L(\Gamma)q \subset D_2$ is a nonzero projection, since $q$ is equal to the support projection of $E_{L(\Gamma)}(s)$, we find that $rs \neq 0$. Thus, $rz \neq 0$, so that $r D_2 r$ is nonamenable. Since this holds for every choice of $r$, it follows from Proposition \ref{prop.control-normalizer} that $D_2 \subset q L(\Gamma) q$. In particular, $L(\cR) \prec_M L(\Gamma)$. It follows that $L(\cR)$ has a direct summand that is finite, contradicting our assumption that $\cR$ is purely infinite. This final contradiction shows that $z'=0$. So, $L_\vphi(\R) \prec_f L_\om(\R)$ and the proposition is proven.
\end{proof}

%
%
%

\begin{proof}[{Proof of Theorem \ref{thm.main3}}]
If $G_1 \actson (Z_1,\zeta_1)$ is measure preserving, then $\stc(\al_1) = \delta_0$ and there is nothing to prove. We may thus assume that $G_1 \actson (Z_1,\zeta_1)$ is not measure preserving. As explained above, it follows that $\Gamma \actson (X_1,\mu_1)$ is ergodic and of type III.

Denote by $\vphi_i$ the canonical crossed product state on $M_i$. Assume that $\pi : M_1 \to M_2$ is any embedding with expectation. By Lemma \ref{lem.canonical-embedding}, we view $M_2$ as a von Neumann subalgebra of a generalized Bernoulli crossed product $N_2 = (P_2,\om_2)^{I_2} \rtimes \Gamma_2$, where $I_2 = \Gamma_2 / G_2$ and $P_2 = L^\infty(Z_2,\zeta_2) \rtimes G_2$, with crossed product state $\om_2$ on $P_2$. We still denote by $\om_2$ the natural state on $N_2$. There is a unique faithful normal conditional expectation $E : N_2 \to M_2$ satisfying $\om_2 = \vphi_2 \circ E_2$. The action $\Gamma_2 \actson I_2 = \Gamma_2 / G_2$ has amenable stabilizers and has the property that $\Stab\{i,j\} = \{e\}$ when $i \neq j$.

We apply Proposition \ref{prop.main-technical} to the orbit equivalence relation $\cR_1 = \cR(\Gamma_1 \actson X_1)$, which is ergodic and of type III. Note that $L(\cR_1)$ is canonically isomorphic with $M_1$ and the state $\vphi_{\mu_1}$ in Proposition \ref{prop.main-technical} is equal to the canonical state $\vphi_1$ on the crossed product $M_1 = L^\infty(X_1,\mu_1) \rtimes \Gamma_1$. The centralizer $M_1^{\vphi_1}$ contains $L^\infty(X_1,\mu_1) \rtimes \Lambda_1$, which has trivial relative commutant in $M_1$. So, $M_1^{\vphi_1}$ is a nonamenable factor. By Proposition \ref{prop.main-technical}, we find that $\vphi_1 \circ \pi^{-1} \circ E \prec_f \om_2$. Using Proposition \ref{prop.class}, we find an atomic probability measure $\rho$ such that
$$\class(\vphi_1 \circ \pi^{-1} \circ E) \prec \rho \ast \class(\om_2) \; .$$
Since $\class(\vphi_1) \prec \class(\vphi_1 \circ \pi^{-1} \circ E)$, we get that
\begin{equation}\label{eq.almost-second-statement}
\class(\vphi_1) \prec \rho \ast \class(\om_2) \; .
\end{equation}
We now prove that $\class(\vphi_1) = \stc(\al_1)$ and $\class(\om_2) = \stc(\al_2)$. By construction, for every $g \in \Gamma_1$, the measure $g \cdot \mu_1$ is of the form $g \cdot \mu_1 = \prod_{i \in I} (g_i \cdot \zeta_1)$ with $g_i \in G_1$ and with all but finitely many $g_i$ equal to $e$. Moreover, any collection of such $g_i \in G_1$ can be realized by the appropriate choice of $g \in \Gamma_1$. Since $\class(\vphi_1)$ is the join of the measure classes $(\log d(g \cdot \mu_1)/ d\mu_1)_*(\mu_1)$, $g \in \Gamma_1$, it follows that $\class(\vphi_1)$ is the join of all convolution products of $(\log d(g \cdot \zeta_1)/ d\zeta_1)_*(\zeta_1)$, $g \in G_1$. This is precisely $\stc(\al_1)$.

Secondly, $\class(\om_2)$ is the join of all convolution powers of $\class(\om_2|_{P_2})$. Since $\class(\om_2|_{P_2})$ is the join of the measure classes $(\log d(g \cdot \zeta_2)/ d\zeta_2)_*(\zeta_2)$, $g \in G_2$, it follows that $\class(\om_2) = \stc(\al_2)$.
The second statement of the theorem thus follows from \eqref{eq.almost-second-statement}.

To prove the first statement of the theorem, assume that $\pi$ is a $*$-isomorphism between $M_1$ and $M_2$. By symmetry, it suffices to prove that $\stc(\al_1) \prec \stc(\al_2)$. We apply Lemma \ref{lem.class}. Since $P_2 = L^\infty(Z_2) \rtimes G_2$, we may view
$$N_2 = (P_2,\om_2)^{I_2} \rtimes \Gamma_2 = L^\infty\bigl((Z_2,\zeta_2)^{I_2}\bigr) \rtimes \cG_2 \; ,$$
where $\cG_2 = G_2 \wr_{I_2} \Gamma_2 = G_2^{(I_2)} \rtimes \Gamma_2$ is the generalized wreath product group that acts naturally on $(Z_2,\zeta_2)^{I_2} = (X_2,\mu_2)$. For every $g \in \cG_2$, we have that $u_g^* \si^{\om_2}_t(u_g) \in L^\infty(X_2,\mu_2) \subset M_2 = \pi(M_1)$. Since the linear span of $u_g L^\infty(X_2,\mu_2)$, $g \in \cG_2$, is dense in $L^2(N_2,\om_2)$, we certainly have the density of the linear span of $u_g M_2$, $g \in \cG_2$.

Since $\vphi_1 \circ \pi^{-1} \circ E \prec \om_2$, since $\om_2 = \vphi_2 \circ E$ and since $M_1^{\vphi_1}$ is a factor, it follows from Lemma \ref{lem.class} that $\class(\vphi_1) \prec \class(\om_2)$. We have proven above that $\class(\vphi_1) = \stc(\al_1)$ and $\class(\om_2) = \stc(\al_2)$. So the theorem is proven.
\end{proof}

Theorem \ref{thm.main} is an immediate consequence of Theorem \ref{thm.main3}, as we show now.

\begin{proof}[{Proof of Theorem \ref{thm.main}}]
Given equivalent probability measures $\nu \sim \eta$ on a standard Borel space $Y$ and given a nonamenable group $\Lambda$, the nonsingular Bernoulli action of $\Gamma = \Z \ast \Lambda$ given by \eqref{eq.my-bernoulli} is isomorphic with the action defined by \eqref{eq.more-general-nonsingular} associated with $G = \Z \actson^\al (Z,\zeta) = \prod_{n \in \Z} (Y,\mu_n)$, where $\mu_n = \nu$ if $n \in \N$ and $\mu_n = \eta$ if $n \in \Z \setminus \N$. Define $\gamma = \stc((\log d\nu / d\eta)_*(\nu))$.

By Theorem \ref{thm.main3}, it suffices to prove that $\gamma \sim \stc(\al)$. By definition, $\stc(\al)$ is the smallest stable measure class satisfying $(\log d(n \cdot \zeta)/d\zeta)_*(\zeta) \prec \stc(\al)$ for all $n \in \Z$. The measure $(\log d(n \cdot \zeta)/d\zeta)_*(\zeta)$ is equivalent with the $|n|$-fold convolution power of $(\log d\nu / d\eta)_*(\nu)$ or its opposite, depending on the sign of $n$. So, $\stc(\al) \sim \stc((\log d\nu / d\eta)_*(\nu)) = \gamma$.
\end{proof}

To prove Theorem \ref{thm.main2}, we use the following lemma about the relation between independent Borel sets and convolution products. This result is essentially contained in \cite[Section 3]{LP97}, but we provide an elementary proof for convenience.

As mentioned in the introduction, recall that a Borel set $K \subset \R$ is said to be independent if every finite subset $\cF \subset K$ generates a free abelian subgroup of $\R$ of rank $|\cF|$. In other words, a Borel set $K \subset \R$ is independent if and only if for every $n$-tuple of distinct elements $x_1,\ldots,x_n \in K$, the homomorphism $\Z^n \to \R : \lambda \mapsto \sum_{k=1}^n \lambda_k x_k$ is injective.

Also recall that given a measure class $\mu$ on $\R$, we define $\mutil$ by $\mutil(\cU) = \mu(-\cU)$ and we denote by $\stc(\mu)$ the join of the measure classes $\mu^{\ast n} \ast {\mutil}^{\ast m}$, $n,m \geq 0$.

\begin{lemma}\label{lem.indep-K}
Let $K \subset \R$ be an independent Borel set. We decompose any probability measure $\eta$ on $\R$ as the sum $\eta = \eta_a + \eta_c$ of an atomic and a nonatomic measure.
\begin{enumlist}
\item If $\eta$ is a nonatomic probability measure on $\R$, the set $C = \{x \in \R \mid \eta(x+K) > 0\}$ is countable. For any probability measure $\eta$ on $\R$, we define the measure class $\pi_K(\eta)$ on $K$ by $\pi_K(\eta) := \bigvee_{x \in C} (\delta_{-x} \ast \eta_c)|_{K}$.
\item Let $\mu$ be a nonatomic probability measure on $\R$ and let $\eta$ be any probability measure on $\R$. Denote by $\eta_a$ the atomic part of $\eta$. Then, $\pi_K(\eta \ast \mu) \sim \pi_K(\eta_a \ast \mu)$. In particular, if also $\eta$ is nonatomic, then $\pi_K(\eta \ast \mu) = 0$. If $\eta_a \neq 0$, we conclude that $\pi_K(\eta \ast \mu) \sim \pi_K(\mu)$.
\item For every probability measure $\mu$ on $\R$ and every atomic probability measure $\rho$ on $\R$, we have that $\pi_K(\rho \ast \stc(\mu)) \sim \pi_K(\mu_c) \vee \pi_K(\mutil_c)$.
\item If $x \in \R$ and $\mu$ is a probability measure with $\mu(\R \setminus K) = 0$, then $\pi_K(\delta_x \ast \mu) \sim \mu_c$ and $\pi_K(\delta_x \ast \mutil) = 0$.
\end{enumlist}
\end{lemma}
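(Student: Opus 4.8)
I would derive all four parts from a single combinatorial property of independent sets, together with a handful of formal properties of the assignment $\pi_K$ on measure classes. The key fact is: \emph{for $t \neq 0$, the set $K \cap (t+K)$ is empty or a single point.} Indeed, if $a, b \in K \cap (t+K)$, write $a = t + a'$ and $b = t + b'$ with $a', b' \in K$; then $(a - a') - (b - b') = t - t = 0$, i.e.\ $a - b - a' + b' = 0$ is a $\Z$-linear relation among the points $a, b, a', b'$ of $K$, so by independence they cannot be four distinct points, and the residual coincidences ($a = a'$ or $b = b'$, which would force $t = 0$; $a = b'$ together with $a' = b$, which would force $2(a - b) = 0$) are all excluded. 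Hence $a = b$. Running the same computation on a relation $a_1 + b_1 = a_2 + b_2$ with $a_i, b_i \in K$ shows in addition that $(y + K) \cap (-K)$ is finite for every $y$, and of course $(x + K) \cap (y + K)$ is empty or a point for $x \neq y$; this is the only place independence is used. Part 1 is then immediate: if $\eta$ is nonatomic and $x \neq y$ lie in $C$, then $\eta$ gives mass $0$ to the at-most-one-point set $(x + K) \cap (y + K)$, so the translates $\{x + K : x \in C\}$ are $\eta$-essentially disjoint; therefore $\{x : \eta(x + K) > 1/n\}$ has at most $n$ elements, and $C$ is a countable union of finite sets.

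\textbf{Formal properties of $\pi_K$, and part 2.} Directly from the definition I would record: (i) $\pi_K(\lambda)$ depends only on the nonatomic part $\lambda_c$; (ii) $\pi_K$ is translation invariant, hence $\pi_K(\rho \ast \lambda) \sim \pi_K(\lambda)$ for every nonzero atomic measure $\rho$; (iii) $\pi_K$ sends countable sums of finite measures to joins of measure classes and is monotone for $\prec$. For part 2, split $\eta \ast \mu = \eta_a \ast \mu + \eta_c \ast \mu$; by (iii) it suffices to prove $\pi_K(\eta_c \ast \mu) = 0$ when $\mu$ is nonatomic, and this is the one genuinely new computation: $(\eta_c \ast \mu)(x + K) = \int_\R \eta_c((x - s) + K) \, d\mu(s)$, and by part 1 the integrand is nonzero only for $s$ in the countable set $x - C(\eta_c)$, which is $\mu$-null since $\mu$ is nonatomic; so $(\eta_c \ast \mu)(x + K) = 0$ for all $x$, i.e.\ the defining join for $\pi_K(\eta_c \ast \mu)$ runs over the empty index set. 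The remaining statements of part 2 then follow from (ii) and (iii).

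\textbf{Part 3.} Expanding $\stc(\mu)$ as the join of $\mu^{\ast n} \ast \mutil^{\ast m}$ over $n, m \geq 0$ and using (ii) and (iii), I reduce to showing $\bigvee_{n, m \geq 0} \pi_K(\mu^{\ast n} \ast \mutil^{\ast m}) \sim \pi_K(\mu_c) \vee \pi_K(\mutil_c)$. The inclusion ``$\succ$'' is read off from the two terms $(n, m) = (1, 0)$ and $(0, 1)$ together with (i). For ``$\prec$'', I first prove $\pi_K(\mu^{\ast n}) \prec \pi_K(\mu_c)$ for all $n$ by induction: write $\mu^{\ast(n+1)} = \mu^{\ast n} \ast \mu$, split both factors into atomic and nonatomic parts, and observe that the nonatomic part of the product is a sum of three terms whose $\pi_K$'s are dominated respectively by $\pi_K(\mu_c)$ (atomic times nonatomic, by (ii)), by $\pi_K(\mu^{\ast n}) \prec \pi_K(\mu_c)$ (by (ii) and the induction hypothesis), and by $0$ (nonatomic times nonatomic, by part 2). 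Symmetrically $\pi_K(\mutil^{\ast m}) \prec \pi_K(\mutil_c)$. Applying the same three-term splitting to $\mu^{\ast n} \ast \mutil^{\ast m}$ then gives $\pi_K(\mu^{\ast n} \ast \mutil^{\ast m}) \prec \pi_K(\mu^{\ast n}) \vee \pi_K(\mutil^{\ast m}) \prec \pi_K(\mu_c) \vee \pi_K(\mutil_c)$, closing the argument.

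\textbf{Part 4, and the main difficulty.} By (ii), $\pi_K(\delta_x \ast \mu) \sim \pi_K(\mu) \sim \pi_K(\mu_c)$, and now $\mu_c$ is carried by $K$. In the defining join $\pi_K(\mu_c) = \bigvee_{y \,:\, \mu_c(y + K) > 0} (\delta_{-y} \ast \mu_c)|_K$, the index $y = 0$ contributes $\mu_c|_K = \mu_c$, whereas for $y \neq 0$ and any Borel $\cU \subseteq K$ one has $(\delta_{-y} \ast \mu_c)(\cU) = \mu_c((y + \cU) \cap K) = 0$, because $(y + \cU) \cap K \subseteq (y + K) \cap K$ is empty or a point and $\mu_c$ is nonatomic; hence $\pi_K(\delta_x \ast \mu) \sim \mu_c$. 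For $\mutil$, the measure $\mutil_c$ is carried by $-K$, and by the combinatorial core $\mutil_c(y + K) = \mutil_c((y + K) \cap (-K)) = 0$ for every $y$, so $\pi_K(\delta_x \ast \mutil) = 0$. I expect the only real obstacle to be the bookkeeping in the ``$\prec$'' half of part 3 — tracking which convolution summands are atomic and which are nonatomic, and feeding each into part 2 or into (ii) without error; the combinatorial core itself is short, modulo the little case analysis indicated above.
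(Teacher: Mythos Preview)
Your proof is correct and follows the same overall architecture as the paper: the same combinatorial core (the paper records it as the pair of claims that $(x+K)\cap K$ has at most one element for $x\neq 0$ and $(x-K)\cap K$ has at most two), the same essential-disjointness argument for part~1, the same Fubini computation for part~2, and the same direct analysis for part~4.

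The one genuine difference is part~3. You reduce to $\bigvee_{n,m}\pi_K(\mu^{\ast n}\ast\mutil^{\ast m})$ and then run an induction on $n$ (and symmetrically on $m$), at each step splitting both factors into atomic and nonatomic parts and feeding the three resulting cross terms into property~(ii) or part~2. The paper avoids the induction entirely by first factoring out all the atomic content: writing $\rho_1=\rho\ast\stc(\mu_a)$, one has $\rho\ast\stc(\mu)\sim\rho_1\ast\stc(\mu_c)$, and since $\rho_1$ is atomic and $\mu_c$ is already nonatomic, part~2 kills every term $\rho_1\ast\mu_c^{\ast n}\ast\mutil_c^{\ast m}$ with $n+m\geq 2$ in one stroke, leaving only $(n,m)\in\{(1,0),(0,1)\}$. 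Your route is correct and the bookkeeping you flagged as ``the only real obstacle'' does go through cleanly; the paper's factorization $\stc(\mu)\sim\stc(\mu_a)\ast\stc(\mu_c)$ is exactly the shortcut that eliminates that bookkeeping.
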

\begin{proof}
We first prove the following two claims.
\begin{enumlist}[label=(\roman*),labelwidth=4ex,leftmargin=4.5ex]
\item If $x \in \R \setminus \{0\}$, then $(x+K) \cap K$ has at most one element.
\item If $x \in \R$, then $(x-K) \cap K$ has at most two elements
\end{enumlist}
To prove (i), if $(x+K) \cap K$ is nonempty and $x \neq 0$, we can write $x = a-b$ with $a,b \in K$ and $a \neq b$. Since $K$ is independent, one deduces that $(x+K) \cap K = \{a\}$. To prove (ii), if $(x-K) \cap K$ is nonempty, we can write $x = a+b$ with $a,b \in K$. Since $K$ is independent, one deduces that $(x-K) \cap K = \{a,b\}$.

1.\ Fix a nonatomic probability measure $\eta$ on $\R$. If $x \neq y$, it follows from (i) that $\eta((x+K) \cap (y+K)) = 0$. Since $\eta$ is a finite measure, the set $C = \{x \in \R \mid \eta(x+K) > 0\}$ is countable.

2.\ Fix $x \in \R$. Then, $(\eta \ast \mu)(x+K) = \int_\R \mu(-y+x + K) \, d\eta(y)$. There are only countable many $y \in \R$ such that $\mu(-y+x + K) > 0$. Thus,
$$(\eta_c \ast \mu)(x+K) = \int_\R \mu(-y+x + K) \, d\eta_c(y) = 0 \; .$$
This holds for every $x \in \R$, so that $\pi_K(\eta_c \ast \mu) = 0$. It follows that $\pi_K(\eta \ast \mu) \sim \pi_K(\eta_a \ast \mu)$. By definition, $\pi_K(\delta_x \ast \mu) \sim \pi_K(\mu)$ for every $x \in \R$. Then also $\pi_K(\eta_a \ast \mu) \sim \pi_K(\mu)$, whenever $\eta_a \neq 0$.

3.\ Write $\rho_1 = \rho \ast \stc(\mu_a)$. Then, $\rho \ast \stc(\mu) \sim \rho_1 \ast \stc(\mu_c)$. By 2, we know that $\pi_K(\rho_1 \ast \mu_c^{\ast n} \ast \mutil_c^{\ast m}) = 0$ when $n + m \geq 2$. Since $\rho_1$ is atomic, also $\pi_K(\rho_1 \ast \delta_0) = 0$. Therefore,
$$\pi_K(\rho \ast \stc(\mu)) \sim \pi_K(\rho_1 \ast \stc(\mu_c)) \sim \pi_K(\rho_1 \ast \mu_c) \vee \pi_K(\rho_1 \ast \mutil_c) \sim \pi_K(\mu_c) \vee \pi_K(\mutil_c) \; .$$

4.\ By definition, $\pi_K(\delta_x \ast \mu) \sim \pi_K(\delta_x \ast \mu_c)$. When $y \neq x$, it follows from (i) that $(\delta_x \ast \mu_c)(y + K) = 0$. Also, $\delta_x \ast \mu_c$ is supported on $x + K$. Thus, $\pi_K(\delta_x \ast \mu) \sim \delta_{-x} \ast \delta_x \ast \mu_c = \mu_c$.

We also have $\pi_K(\delta_x \ast \mutil) = \pi_K(\delta_x \ast \mutil_c)$. By (ii), for every $y \in \R$, we have
$$(\delta_x \ast \mutil_c)(y + K) = \mutil_c(-x + y + K) = \mu_c(x-y-K) = \mu_c(K \cap (x-y-K)) = 0 \; .$$
This holds for all $y \in \R$ and thus, $\pi_K(\delta_x \ast \mutil) = 0$.
\end{proof}

We can then deduce the following consequence of Theorem \ref{thm.main3}.

\begin{corollary}\label{cor.Bernoulli-independent-set}
For $i \in \{1,2\}$, let $\nu_i \sim \eta_i$ be equivalent probability measures on a standard Borel space $Y_i$. Let $\Lambda_i$ be nonamenable groups. Denote $\Gamma_i = \Z \ast \Lambda_i$ and consider the nonsingular Bernoulli action $\Gamma_i \actson (X_i,\mu_i)$ given by \eqref{eq.my-bernoulli}. Write $\sigma_i = (\log d\nu_i / d\eta_i)_*(\nu_i)$. Denote $M_i = L^\infty(X_i) \rtimes \Gamma_i$. Let $K \subset \R$ be an independent Borel set and use the notation $\pi_K$ introduced in Lemma \ref{lem.indep-K}.
\begin{enumlist}
\item If $M_1 \cong M_2$, then $\pi_K(\sigma_1) \vee \pi_K(\sigmatil_1) \sim \pi_K(\sigma_2) \vee \pi_K(\sigmatil_2)$.
\item If $M_1$ admits an embedding with expectation into $M_2$, then $\pi_K(\sigma_1) \vee \pi_K(\sigmatil_1) \prec \pi_K(\sigma_2) \vee \pi_K(\sigmatil_2)$.
\end{enumlist}
\end{corollary}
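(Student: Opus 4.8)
The plan is to deduce Corollary \ref{cor.Bernoulli-independent-set} directly from Theorem \ref{thm.main3} and the combinatorics of $\pi_K$ collected in Lemma \ref{lem.indep-K}. Exactly as in the proof of Theorem \ref{thm.main}, the nonsingular Bernoulli action $\Gamma_i \actson (X_i,\mu_i)$ of \eqref{eq.my-bernoulli} is of the form \eqref{eq.more-general-nonsingular} with $G_i = \Z$ acting by the shift $\al_i$ on $(Z_i,\zeta_i) = \prod_{n\in\Z}(Y_i,\mu_n)$, and the measure $(\log d(n\cdot\zeta_i)/d\zeta_i)_*(\zeta_i)$ is equivalent to the $|n|$-fold convolution power of $\sigma_i$ (for $n\geq 0$) or of $\sigmatil_i$ (for $n<0$), so that $\stc(\al_i) \sim \stc(\sigma_i)$. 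Hence Theorem \ref{thm.main3}(1) gives $\stc(\sigma_1) \sim \stc(\sigma_2)$ whenever $M_1 \cong M_2$, and Theorem \ref{thm.main3}(2) gives an atomic probability measure $\rho$ on $\R$ with $\stc(\sigma_1) \prec \rho \ast \stc(\sigma_2)$ whenever $M_1$ embeds with expectation into $M_2$.

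The next step is to push these relations through $\pi_K$, for which I would first record two elementary facts. First, $\pi_K(\eta) = \pi_K(\eta_c)$ for every probability measure $\eta$: this is immediate from the definition in Lemma \ref{lem.indep-K}(1), since a summand $(\delta_{-x}\ast\eta_c)|_K$ with $\eta_c(x+K) = 0$ is the zero measure. Second, $\pi_K$ is monotone, i.e.\ $\mu \prec \nu$ implies $\pi_K(\mu) \prec \pi_K(\nu)$; this reduces to the fact that $\mu \prec \nu$ implies $\mu_c \prec \nu_c$, which one sees by splitting $\R$ into the countable set of atoms of $\mu$ together with those of $\nu$, and its complement. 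Now apply Lemma \ref{lem.indep-K}(3) with $\mu = \sigma_i$, both for $\rho = \delta_0$ and for the given atomic $\rho$; since the nonatomic part of $\sigmatil_i$ is $\widetilde{(\sigma_i)_c}$, so that $\pi_K((\sigmatil_i)_c) = \pi_K(\sigmatil_i)$, this yields
$$\pi_K(\stc(\sigma_i)) \sim \pi_K(\sigma_i) \vee \pi_K(\sigmatil_i) \sim \pi_K(\rho \ast \stc(\sigma_i)) \; .$$

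Finally, applying the (two-sided) monotonicity of $\pi_K$ to $\stc(\sigma_1) \sim \stc(\sigma_2)$ gives statement 1, and applying monotonicity to $\stc(\sigma_1) \prec \rho \ast \stc(\sigma_2)$ gives statement 2. I do not foresee any genuine difficulty here: all the real content is already in Lemma \ref{lem.indep-K}, whose point is precisely that $\pi_K$ sees only nonatomic parts and annihilates the higher convolution terms once the atomic contributions are grouped together, so that $\pi_K$ turns the a priori complicated object $\stc(\sigma_i)$ — and even $\rho \ast \stc(\sigma_i)$ — into the simple join $\pi_K(\sigma_i) \vee \pi_K(\sigmatil_i)$. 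The only step needing care is the consistent bookkeeping of the atomic/nonatomic decompositions of the $\sigma_i$.
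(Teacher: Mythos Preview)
Your proof is correct and follows essentially the same approach as the paper: reduce to $\stc(\sigma_1)\prec\rho\ast\stc(\sigma_2)$ (resp.\ $\stc(\sigma_1)\sim\stc(\sigma_2)$) via Theorem \ref{thm.main3} and the identification $\stc(\al_i)\sim\stc(\sigma_i)$, then apply $\pi_K$ and Lemma \ref{lem.indep-K}.3. The only difference is that you spell out the two routine facts $\pi_K(\eta)=\pi_K(\eta_c)$ and monotonicity of $\pi_K$, which the paper leaves implicit when it writes ``applying $\pi_K$''.
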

\begin{proof}
Considering $G_i = \Z \actson^{\al_i} (Z_i,\zeta_i) = \prod_{n \in \Z} (Y_i,\mu_{i,n})$, where $\mu_{i,n} = \nu_i$ if $n \in \N$ and $\mu_{i,n} = \eta_i$ if $n \in \Z \setminus \N$, we have seen in the proof of Theorem \ref{thm.main} that $\stc(\al_i) \sim \stc(\sigma_i)$.

If $M_1$ admits an embedding with expectation into $M_2$, Theorem \ref{thm.main3} provides an atomic probability measure $\rho$ such that $\stc(\al_1) \prec \rho \ast \stc(\al_2)$. Thus, $\stc(\sigma_1) \prec \rho \ast \stc(\sigma_2)$. Applying $\pi_K$ and using Lemma \ref{lem.indep-K}.3, we conclude that $\pi_K(\sigma_1) \vee \pi_K(\sigmatil_1) \prec \pi_K(\sigma_2) \vee \pi_K(\sigmatil_2)$. If $M_1 \cong M_2$, also the converse absolute continuity holds.
\end{proof}

\begin{proof}[{Proof of Theorem \ref{thm.main2}}]
In the context of Theorem \ref{thm.main2}, the measure $\sigma_i = (\log d\nu_i / d\eta_i)_*(\nu_i)$ is a translate of $\nu_i$ and $\nu_i$ is supported on $K$. By Lemma \ref{lem.indep-K}.4, we get that $\pi_K(\sigma_i) \sim \nu_i$ and $\pi_K(\sigmatil_i) = 0$. The result thus follows from Corollary \ref{cor.Bernoulli-independent-set}.
\end{proof}

\begin{example}\label{exam.full-with-standard-tau}
Fix a compact independent set $K \subset \R$ such that $K$ is homeomorphic to a Cantor set (see e.g.\ \cite[Theorems 5.1.4 and 5.2.2]{Rud62}). Fix a countable nonamenable group $\Lambda$ and put $\Gamma = \Z \ast \Lambda$. Put $Y = [0,1] \cup K$.

Given any nonatomic probability measure $\rho$ on $K$, define the probability measure $\nu_\rho$ on $Y$ as $(\lambda + \rho)/2$, where $\lambda$ is the Lebesgue measure on $[0,1]$. Define the probability measure $\eta_\rho$ on $Y$ by normalizing $\exp(-y) \, d\nu_\rho(y)$. Consider the associated nonsingular Bernoulli action $\Gamma \actson^{\al_\rho} (X,\mu_\rho)$ given by \eqref{eq.my-bernoulli}, with crossed product factor $M_\rho = L^\infty(X,\mu_\rho) \rtimes_{\al_\rho} \Gamma$.
\begin{enumlist}
\item $M_\rho$ is a full factor of type III and Connes $\tau$-invariant of $M_\rho$ is the usual topology on $\R$. When $\Lambda$ has infinite conjugacy classes and is not inner amenable, this was proven in \cite[Proposition 7.1]{VW17}, but the result holds by only assuming that $\Lambda$ is nonamenable, as we show in Lemma \ref{lem.full} below.
\item Let $\rho$ and $\rho'$ be nonatomic probability measures on $K$. If $M_{\rho} \cong M_{\rho'}$, then $\rho \sim \rho'$. If $M_{\rho}$ admits an embedding with expectation into $M_{\rho'}$, then $\rho \prec \rho'$. Both statements follow from Corollary \ref{cor.Bernoulli-independent-set}: given a nonatomic probability measure $\rho$ on $K$, we have that $(\log d\nu_\rho / d\eta_\rho)_*(\nu_\rho)$ is a translate of $\nu_\rho$. Since $\rho$ is supported on $K$ and since $\lambda(x+ K) = 0$ for every $x \in \R$, it follows from Lemma \ref{lem.indep-K} that $\pi_K(\nu_\rho) \vee \pi_K(\widetilde{\nu_\rho}) \sim \rho$.
\end{enumlist}
\end{example}

For completeness, we include a proof for the following result, which was proven in \cite[Proposition 7.1]{VW17} under the stronger assumption that $\Lambda$ has infinite conjugacy classes and is not inner amenable.

\begin{lemma}\label{lem.full}
Let $\nu \sim \eta$ be equivalent probability measures on a standard Borel space $Y$. Assume that $\nu$ and $\eta$ are not concentrated on a single atom. Let $\Lambda$ be a countable nonamenable group. Write $\Gamma = \Z \ast \Lambda$ and define the nonsingular Bernoulli action $\Gamma \actson (X,\mu)$ by \eqref{eq.my-bernoulli}. Then, the factor $M = L^\infty(X,\mu) \rtimes \Gamma$ is full and the $\tau$-invariant is the weakest topology on $\R$ that makes the map
$$\R \to \cU(L^\infty(Y,\nu)) : t \mapsto \Bigl(\frac{d\nu}{d\eta}\Bigr)^{it}$$
continuous, where $\cU(L^\infty(Y,\nu))$ is equipped with the strong topology.
\end{lemma}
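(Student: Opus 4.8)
The plan is to verify fullness and compute the $\tau$-invariant directly from the structure of the Bernoulli crossed product $M = L^\infty(X,\mu) \rtimes \Gamma$, exploiting the nonamenability of $\Lambda$ and the malleable-deformation/spectral-gap machinery that is already available. First I would recall that $M$ is full if and only if the quotient topology on $\mathrm{Out}(M)$ is discrete, equivalently if every sequence of unitaries $w_n \in \cU(M)$ with $\|w_n x - x w_n\|_{\vphi}^\sharp \to 0$ for all $x$ must be asymptotically scalar. The key structural point is that $L^\infty(X) \rtimes \Lambda \subset M$ sits inside the centralizer $M^{\vphi_\mu}$ (since the $\Lambda$-part of the action is measure preserving), and this subalgebra has trivial relative commutant in $M$ by a standard Bernoulli/weak-mixing argument. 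Combining the noncommutative Bernoulli malleable deformation (as recalled in the Preliminaries) with Popa's spectral gap rigidity for generalized Bernoulli actions, a central sequence in $M$ must asymptotically commute with $L(\Lambda)$; since $\Lambda$ is nonamenable, $L(\Lambda)$ has no nontrivial central sequences, and the weak mixing of the Bernoulli part then pushes the central sequence into $\C 1$. This is essentially the argument of \cite{VW17} but it uses only nonamenability of $\Lambda$ rather than the icc/non-inner-amenable hypotheses.

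For the $\tau$-invariant, recall $\tau(M)$ is the weakest topology on $\R$ making $t \mapsto \si_t^{\vphi}$ continuous into $\mathrm{Aut}(M)$ (modulo inner automorphisms), i.e.\ making $t \mapsto \si_t^{\vphi_\mu}$ converge to $\mathrm{id}$ in $\mathrm{Out}(M)$ along a net $t_n$ iff there are unitaries $w_n \in \cU(M)$ with $w_n \si_{t_n}^{\vphi_\mu}(x) w_n^* \to x$ for all $x \in M$. On $L^\infty(X)$ the modular flow is trivial, so the issue is the flow on the unitaries $u_g$; explicitly $\si_t^{\vphi_\mu}(u_g)$ multiplies $u_g$ by the Connes cocycle, which on the generator of the $\Z$-part is governed precisely by $\bigl(\frac{d\nu}{d\eta}\bigr)^{it}$ acting on the $i_0$-coordinate (and its translates). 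The plan is to show: (a) if $t_n \to 0$ in the topology making $t \mapsto (d\nu/d\eta)^{it}$ strongly continuous, one can exhibit correcting unitaries $w_n \in L^\infty(X)$ built coordinatewise from these Radon–Nikodym powers, so this topology is finer than $\tau(M)$; (b) conversely, if correcting unitaries $w_n$ exist, then by fullness (just proven) and the Bernoulli/spectral-gap argument they must asymptotically lie in $L^\infty(X) \rtimes \Lambda$, hence asymptotically commute with the $\Lambda$-side, and evaluating against the $i_0$-coordinate forces $(d\nu/d\eta)^{it_n} \to 1$ strongly, so $\tau(M)$ is finer than the stated topology.

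The main obstacle will be step (b): controlling the correcting unitaries $w_n$ precisely enough to extract convergence of $(d\nu/d\eta)^{it_n}$ in the \emph{strong} operator topology on $\cU(L^\infty(Y,\nu))$, rather than merely in measure or in some weaker averaged sense. Concretely, after using spectral gap rigidity to locate $w_n$ near $L^\infty(X)\rtimes \Lambda$ (with explicit error estimates in $\|\cdot\|_{\vphi_\mu}$), one must feed this into the Connes-cocycle identity on a single Bernoulli coordinate and argue that no cancellation among the infinitely many coordinates can hide a failure of strong convergence — this is where the assumption that $\nu,\eta$ are not concentrated on a single atom is used, since it guarantees the Radon–Nikodym cocycle is genuinely nontrivial and the product over coordinates detects its size faithfully. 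The rest (fullness modulo the spectral-gap input, and direction (a), which is an explicit construction) I expect to be routine given the Preliminaries and the cited deformation results.
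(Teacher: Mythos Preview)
Your fullness argument contains a genuine gap: the assertion ``since $\Lambda$ is nonamenable, $L(\Lambda)$ has no nontrivial central sequences'' is false. Absence of nontrivial central sequences in $L(\Lambda)$ (for icc $\Lambda$) is equivalent to $\Lambda$ not being inner amenable, and there exist nonamenable icc groups that \emph{are} inner amenable. This is exactly the hypothesis the lemma is designed to drop --- the paper explicitly notes that the result was already known under the extra assumption that $\Lambda$ is icc and not inner amenable, so an argument that secretly re-uses that assumption does not prove anything new.

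The paper's fix is short and does not go through central sequences of $L(\Lambda)$ at all. After the reduction from \cite{VW17} (which uses nonamenability via spectral gap and yields $x_n - E_{L(\Lambda)}(x_n)\to 0$), one has a bounded sequence $y_n\in L(\Lambda)$ that asymptotically commutes with all of $Q=L^\infty(X)\rtimes\Lambda$, not merely with $L(\Lambda)$. In particular it asymptotically commutes with a single mean-zero Bernoulli coordinate $\pi_e(a)\in L^\infty(X)$. A direct orthogonality computation (using that $u_h\pi_e(a)u_h^*=\pi_h(a)$ is orthogonal to $\pi_e(a)$ for $h\neq e$) gives
\[
\|y_n\pi_e(a)-\pi_e(a)y_n\|_\vphi^2 \;\geq\; 2\sum_{h\in\Lambda\setminus\{e\}}|(y_n)_h|^2 \;=\; 2\,\|y_n-\vphi(y_n)1\|_2^2,
\]
which forces $y_n\to\vphi(y_n)1$ with no hypothesis on $\Lambda$ beyond infiniteness. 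Note also that the paper does not treat the $\tau$-invariant separately with correcting unitaries as you propose; both fullness and the identification of $\tau(M)$ are reduced, via \cite[Proposition 7.1]{VW17}, to the single statement that any unitary sequence asymptotically commuting with $Q$ is asymptotically scalar.
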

\begin{proof}
Denote by $\vphi$ the canonical crossed product state on $M$. Write $Q = L^\infty(X,\mu) \rtimes \Lambda$. As in the proof of \cite[Proposition 7.1]{VW17}, it suffices to show the following: if $x_n \in \cU(M)$ is a sequence of unitaries such that $x_n a - a x_n \to 0$ $*$-strongly for every $a \in Q$, then $x_n - \vphi(x_n) 1 \to 0$ $*$-strongly. Note that $Q \subset M^\vphi$. There thus exists a unique $\vphi$-preserving conditional expectation $E : M \to L(\Lambda)$. In the proof of \cite[Proposition 7.1]{VW17}, it is shown that $x_n - E(x_n) \to 0$ $*$-strongly. Writing $y_n = E(x_n)$, we have found a bounded sequence in $L(\Lambda)$ satisfying $y_n a - a y_n \to 0$ $*$-strongly for every $a \in Q$.

For every $h \in \Lambda$, denote by $\pi_h : L^\infty(Y,\eta) \to L^\infty(X,\mu)$ the state preserving embedding as the $h$-th coordinate. Choose an element $a \in L^\infty(Y,\eta)$ such that $\int_Y |a|^2 \, d\eta = 1$ and $\int_Y a \, d\eta = 0$. Denote by $(u_g)_{g \in \Lambda}$ the canonical unitaries, so that $(y_n)_g := \vphi(y_n u_g^*)$ are the canonical Fourier coefficients. We also write $\|x\|_{\vphi} = \vphi(x^* x)^{1/2}$ for all $x \in M$. Since $\pi_e(a)$ and $\pi_h(a)$ are orthogonal in $L^2(M,\vphi)$ for all $h \neq e$, a direct computation shows that
$$\|y_n \pi_e(a) - \pi_e(a) y_n\|_\vphi^2 \geq 2 \sum_{h \in \Lambda \setminus \{e\}} |(y_n)_h|^2 = 2 \, \|y_n - \vphi(y_n)1\|_2^2 \; .$$
Therefore, $y_n - \vphi(y_n)1 \to 0$ $*$-strongly. Then also $x_n - \vphi(x_n)1 \to 0$ $*$-strongly.
\end{proof}

\section{Solidity of nonsingular Bernoulli actions, proof of Theorem~\ref{thm.main-solid}}

Recall from \cite{Oza03,VV05} that a von Neumann algebra $M$ is called \emph{solid} if for every diffuse von Neumann subalgebra with expectation $A \subset M$, the relative commutant $A' \cap M$ is amenable. If $B \subset M$ is a von Neumann subalgebras with expectation, recall from \cite{Mar16} that $M$ is said to be \emph{solid relative to $B$} if the following holds: for every nonzero projection $p \in M$ and nonamenable von Neumann subalgebra $Q \subset pMp$ with expectation and with diffuse center $\cZ(Q)$, we have that $Q \prec_M B$.

Recall from \cite{CI08} that a countable nonsingular equivalence relation $\cR$ on a standard probability space $(X,\mu)$ is called \emph{solid} if for every Borel subequivalence relation $\cS \subset \cR$, there exists a partition of $X$, up to measure zero, into $\cS$-invariant Borel subsets $(X_n)_{n \geq 0}$ such that $\cS|_{X_0}$ is amenable and $\cS|_{X_n}$ is ergodic for all $n \geq 1$. This is equivalent to saying that for every diffuse von Neumann subalgebra $A \subset L^\infty(X)$, the relative commutant $A' \cap L(\cR)$ is amenable. Finally, an essentially free nonsingular action $\Gamma \actson (X,\mu)$ is said to be a \emph{solid action} if the orbit equivalence relation $\cR(\Gamma \actson X)$ is solid.

\begin{definition}\label{def.solid-state}
A faithful normal state $\vphi$ on a von Neumann algebra $M$ is said to be a \emph{solid state} if for every faithful normal state $\psi$ on $M$ with nonamenable centralizer $M^\psi$, we have that $\psi \prec \vphi$.
\end{definition}

Theorem \ref{thm.main-solid} is a special case of the following result.

\begin{theorem} \label{thm.solid-more-general}
Let $G$ be a countable amenable group and let $G \actson (Z,\zeta)$ be a nonsingular action on a nontrivial standard probability space. Let $\Lambda$ be a countable nonamenable group. Define $\Gamma = G \ast \Lambda$ and let $\Gamma \actson (X,\mu)$ be defined by \eqref{eq.more-general-nonsingular}. Put $M = L^\infty(X,\mu) \rtimes \Gamma$.
\begin{enumlist}
\item $\Gamma \actson (X,\mu)$ is a solid action.
\item The factor $M$ is solid relative to $L(\Lambda)$.
\item If $\Lambda$ is biexact, then $M$ is solid.
\item If $\Lambda$ is biexact and $(\log d(g \cdot \zeta)/d\zeta)_*(\zeta)$ is nonatomic for every $g \in G \setminus \{e\}$, then the crossed product state $\vphi_\mu$ on $M$ is a solid state.
\end{enumlist}
\end{theorem}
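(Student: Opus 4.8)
The unifying strategy is to transport the problem, via the state-preserving embedding with expectation of Lemma~\ref{lem.canonical-embedding}, into the generalized Bernoulli crossed product $N = (P,\om)^I \rtimes \Gamma$ with amenable base $P = L^\infty(Z,\zeta) \rtimes G$ and index set $I = \Gamma/G$, and then to apply Theorem~\ref{thm.def-rig-nc-bernoulli}. Here $\Gamma \actson I$ has amenable point stabilizers (conjugates of $G$) and trivial stabilizers of pairs, so the hypotheses hold with $\kappa = 2$; under the embedding $L^\infty(X,\mu)$ lands in the Bernoulli base $(P,\om)^I \subset N^\om$, while $L^\infty(X) \rtimes \Lambda$ also lies in $N^\om$ and $L(\Lambda) \subset L(\Gamma)$; and the continuous cores embed compatibly, $\core(M) \subset \core(N) = \core((P,\om)^I) \rtimes \Gamma$, carrying $L_{\vphi_\mu}(\R)$ onto $L_\om(\R)$. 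Since the stabilizer algebras $\core((P,\om)^I) \rtimes \Stab(i)$ are amenable, Theorem~\ref{thm.def-rig-nc-bernoulli} specializes to the dichotomy that I would isolate as the main lemma: if $p \in \core(N)$ is a finite-trace projection, $A \subset p\core(N)p$ is with expectation, $A \not\prec_{\core(N)} L_\om(\R)$, and $A' \cap p\core(N)p$ has no amenable direct summand, then $\cN_{p\core(N)p}(A)\dpr \prec_{\core(N)} \core(L(\Gamma)) := L_\om(\R) \vee L(\Gamma)$.

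To exploit this I would assemble a small toolkit. First, the standard descents for intertwining: $A \prec_{\core(N)} \core(B) \iff A \prec_M B$ for a finite $A$ and $A, B \subset M$ with expectation (with $M \subset N$ with expectation); these hold because $E_{L_\om(\R)}(b) = \om(b)1$ for $b \in N$ and $\om = \vphi_\mu \circ E$. Second, when $A$ arises from a subalgebra of $M$ whose normalizer contains the Cartan subalgebra $L^\infty(X)$, the conclusion $\cN(A)\dpr \prec \core(L(\Gamma))$ yields $L^\infty(X) \prec_N L(\Gamma)$, and then the ``pseudo-normalizer'' step from the proof of Proposition~\ref{prop.main-technical} (produce an intertwiner $v$ and a faithful $\theta : L^\infty(X)e \to qL(\Gamma)q$, set $D_1 = \cN_{eMe}(L^\infty(X)e)\dpr$, let $D_2$ be the analogous algebra for $\theta(L^\infty(X)e)$, observe $v^* D_1 v \subset s D_2 s$, and invoke Proposition~\ref{prop.control-normalizer}) forces a corner of $M$ to embed with expectation into $L(\Gamma)$. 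Third, to sharpen $L(\Gamma)$ to $L(\Lambda)$ one uses that $\Gamma = G \ast \Lambda$ with $G$ amenable: a nonamenable subalgebra of $L(\Gamma)$ with diffuse center intertwines into $L(\Lambda)$ (the free-product analogue of deformation rigidity; since the stabilizers $L(\gamma G \gamma^{-1})$ are amenable, this can also be extracted from Proposition~\ref{prop.control-normalizer}).

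Statement (1) then follows by taking a diffuse $A \subset L^\infty(X)$ with $A' \cap M$ nonamenable, cutting by a central projection of $A' \cap \core(N)$ to remove amenable direct summands, noting $A \not\prec_{\core(N)} L_\om(\R)$ (test against unitaries of $A$ tending weakly to $0$), applying the dichotomy, and deducing --- through the pseudo-normalizer step --- that a corner of $M$ embeds with expectation into $L(\Gamma)$; this is absurd since $M$ is a type~III factor whereas $L(\Gamma)$ is finite, and the remaining type~II$_1$ case is \cite{CI08} for generalized Bernoulli actions. For (2), given $Q \subset pMp$ nonamenable with expectation and with diffuse center, choose a faithful normal state $\psi$ on $M$ for which a diffuse abelian $A_0 \subset \cZ(Q)$ lies in $M^\psi$ --- so that $A_0$ is central in $\core(Q)$ and commutes with a finite-trace projection $p' \in \core(Q)$ witnessing the nonamenability of $Q$ --- apply the dichotomy to $A_0 p'$, obtain $Q \prec_M L(\Gamma)$, and sharpen to $Q \prec_M L(\Lambda)$ by the third point above.

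Statement (3) follows from (2): by Ozawa \cite{Oza03}, biexactness of $\Lambda$ makes $L(\Lambda)$ solid, and solidity of $M$ relative to a solid algebra is solidity of $M$ by the transitivity of relative solidity in \cite{Mar16}. For (4), given a faithful normal state $\psi$ on $M$ with nonamenable centralizer $M^\psi$, apply the dichotomy to $A = L_\psi(\R)p$ (which commutes with $M^\psi p$): the alternative $A \prec_{\core(N)} L_\om(\R)$ descends, by Lemma~\ref{lem.HSV}, to $L_\psi(\R) \prec_{\core(M)} L_{\vphi_\mu}(\R)$, i.e.\ $\psi \prec \vphi_\mu$; the alternative $\cN(A)\dpr \prec \core(L(\Gamma))$ is excluded because $\Gamma = G \ast \Lambda$ is biexact (so $L(\Gamma)$ is solid) and the nonatomicity of $(\log d(g\cdot\zeta)/d\zeta)_*(\zeta)$ for $g \neq e$ keeps the relevant abelian subalgebras diffuse in the Maharam extension, so that $M^\psi$, being nonamenable, cannot embed with expectation into $\core(L(\Gamma))$. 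The hardest part will be the second and third paragraphs: moving the intertwining relations cleanly between $\core(N)$, $N$ and $M$, tracking support projections and conditional expectations through the pseudo-normalizer step, carrying out the reduction from $L(\Gamma)$ to $L(\Lambda)$, and, in (4), confirming that the nonatomicity hypothesis genuinely closes off the $\core(L(\Gamma))$-alternative rather than merely weakening it.
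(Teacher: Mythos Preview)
Your overall strategy for (1)--(3) is close to the paper's, though the paper organizes things differently. Rather than your crude dichotomy ($A \prec L_\om(\R)$ versus normalizer $\prec \core(L(\Gamma))$), the paper first establishes a refined one (Lemma~\ref{lem.solid-more-general}): either $A \prec_{\core(M)} L(\Lambda) \vee L_\vphi(\R)$ or $A \prec_{\core(M)} L_{\vphi_a}(\R)$ for some $a \in G^{(I)}$. The passage from $\core(L(\Gamma))$ down to $L(\Lambda) \vee L_\vphi(\R)$ is done via the amalgamated free product decomposition $L_\vphi(\R) \vee L(\Gamma) \cong (L(G) \ovt L_\vphi(\R)) \ast_{L_\vphi(\R)} (L(\Lambda) \ovt L_\vphi(\R))$ and \cite[Theorems 2.4 and 4.2]{CH08}, not via Proposition~\ref{prop.control-normalizer} as you suggest. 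The paper then proves (2) first and deduces (1) from (2) by showing directly that $A \not\prec_M L(\Lambda)$ for diffuse $A \subset L^\infty(X)$, avoiding your split into type~III and type~II$_1$ cases and the pseudo-normalizer detour.

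There is a genuine gap in your argument for (4). You assert that the alternative $L_\psi(\R)p \prec_{\core(N)} L_\om(\R)$ ``descends, by Lemma~\ref{lem.HSV}, to $L_\psi(\R) \prec_{\core(M)} L_{\vphi_\mu}(\R)$''. This descent is not valid: intertwining inside the larger core $\core(N)$ may use partial isometries involving the unitaries $u_a$, $a \in G^{(I)}$, which lie in $N$ but not in $M$. What the paper actually obtains (this is the content of the biexact part of Lemma~\ref{lem.solid-more-general}, proved using \cite{OP03} to show that the $\core(L(\Gamma))$-alternative also forces $A \prec L_\om(\R)$, not to rule it out) is only $L_\psi(\R)p \prec_{\core(M)} L_{\vphi_a}(\R)$ for \emph{some} $a \in G^{(I)}$, i.e.\ $\psi \prec \vphi_a$, where $\vphi_a$ is the crossed product state for the perturbed measure $\mu_a = \prod_i a_i \cdot \zeta$. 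A substantial further argument is then required to show that $\vphi_a$ is unitarily conjugate to $\vphi_\mu$: the paper defines the cocycle $c : \Gamma \to G^{(I)}$ with $g \cdot \mu_a = \mu_{c(g)\al_g(a)}$ and the subgroup $L = \{g \in \Gamma : c(g) = a \, \al_g(a^{-1})\}$; nonatomicity of $(\log d(g\cdot\zeta)/d\zeta)_*(\zeta)$ is used precisely here, to show that $d(g \cdot \mu_a)/d\mu_a \neq 1$ a.e.\ for $g \notin L$, hence $M^{\vphi_a} \subset L^\infty(X) \rtimes L$, so $L$ is nonamenable; then boundedness of $g \mapsto |g|_G$ on $L$ forces $L$ to be conjugate to a subgroup of $\Lambda$, and finally $a = c(g_0)$ for some $g_0$, giving $\mu_a = g_0 \cdot \mu$. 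Your stated role for nonatomicity (``keeps the relevant abelian subalgebras diffuse in the Maharam extension'') does not correspond to any step in the argument, and this entire cocycle analysis is missing from your plan.
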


We first prove the following lemma, in which we also introduce some of the notation that will be used in the proof of Theorem \ref{thm.solid-more-general}.

\begin{lemma}\label{lem.solid-more-general}
Make the same assumptions as in Theorem \ref{thm.solid-more-general}. Write $I = (G \ast \Lambda)/G$. Denote by $\vphi$ the crossed product state on $M$ induced by $\mu$. For every $a \in G^{(I)}$, denote by $\mu_a \sim \mu$ the probability measure on $X$ defined by
$\mu_a = \prod_{i \in I} a_i \cdot \zeta$. Denote by $\vphi_a$ the corresponding crossed product state on $M$.

Let $p \in \core(M)$ be a projection of finite trace. Let $A \subset p \core(M) p$ be a von Neumann subalgebra whose relative commutant $A' \cap p \core(M) p$ has no amenable direct summand. Then one of the following statements holds.
\begin{enumlist}
\item $A \prec_{\core(M)} L(\Lambda) \vee L_\vphi(\R)$.
\item $A \prec_{\core(M)} L_{\vphi_a}(\R)$ for some $a \in G^{(I)}$.
\end{enumlist}
If moreover $\Lambda$ is biexact, then the second statement always holds.
\end{lemma}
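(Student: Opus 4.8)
The plan is to use Lemma \ref{lem.canonical-embedding} to realize $M$ as a subalgebra (with expectation) of the generalized Bernoulli crossed product $N = (P,\om)^I \rtimes \Gamma$, where $P = L^\infty(Z,\zeta) \rtimes G$ and $\om$ is the crossed product state, and then invoke the deformation/rigidity machinery of Theorem \ref{thm.def-rig-nc-bernoulli}. The action $\Gamma \actson I = (G\ast\Lambda)/G$ has amenable (in fact trivial, for pairs) stabilizers, so the hypotheses of Theorem \ref{thm.def-rig-nc-bernoulli} are satisfied with $\kappa = 2$. Since $A' \cap p\core(M)p$ has no amenable direct summand inside $\core(M) \subset \core(N)$, and since $M \subset N$ is with expectation, the relative commutant of $A$ inside $p\core(N)p$ also has no amenable direct summand. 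Applying Theorem \ref{thm.def-rig-nc-bernoulli} to $A \subset p\core(N)p$ with $Q = \cN_{p\core(N)p}(A)\dpr$: letting $z \in \cZ(Q)$ be maximal with $Az \prec_{\core(N)} L_\om(\R)$ and $z' = p-z$, we get $Qz' \prec_{\core(N)} L_\om(\R) \vee L(\Gamma) = \core(L(\Gamma))$.

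The next step is to translate these two alternatives — $Az \prec L_\om(\R)$ and $Qz' \prec \core(L(\Gamma))$ — back into statements about $\core(M)$ and into the two conclusions of the lemma. For the first alternative, I would use the structure $\core(P) = P \rtimes_{\si^\om}\R$ and the fact that $L_\om(\R) \subset \core(N)$ is, in the language of Lemma \ref{lem.HSV}, controlled by the states $\vphi_a$: concretely $L_\om(\R)$ inside $\core(N)$ and the various $L_{\vphi_a}(\R)$ inside $\core(M)$ are related through the $\si^\om$-cocycle $u_a^* \si^\om_t(u_a) \in L^\infty(X)$ for $a \in G^{(I)}$, exactly as exploited in the proof of Theorem \ref{thm.main3} via Lemma \ref{lem.class}. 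So $A \prec_{\core(N)} L_\om(\R)$ should, after cutting by central projections and moving along these cocycles, yield $A \prec_{\core(M)} L_{\vphi_a}(\R)$ for some $a$, which is conclusion 2. For the second alternative, $Q \prec \core(L(\Gamma))$ together with $A \subset Q$ and $A \prec_{\core(M)}$-type control on $L_\vphi(\R)$ versus $L(\Lambda)$ should give $A \prec_{\core(M)} L(\Lambda) \vee L_\vphi(\R)$; here I would further decompose $L(\Gamma) = L(G \ast \Lambda)$ and use that the $L(G)$-side is amenable (since $G$ is amenable) to absorb it into $L_\vphi(\R)$, ruling out any genuinely "$G$-directed" contribution by the no-amenable-direct-summand hypothesis on $A' \cap p\core(M)p$.

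Finally, for the biexact case: if $\Lambda$ is biexact, then so is $\Gamma = G \ast \Lambda$ (free products of biexact groups with amenable groups are biexact), and biexactness of $\Gamma$ upgrades the dichotomy — intertwining into $\core(L(\Gamma))$ forces intertwining into the core of an amenable subalgebra, which again is incompatible with the nonamenability hypothesis, so only alternative 2 ($A \prec L_{\vphi_a}(\R)$ for some $a$) survives. I expect the main obstacle to be the bookkeeping in the first step of the second paragraph: carefully matching $L_\om(\R) \subset \core(N)$ against the family $\{L_{\vphi_a}(\R)\}_{a \in G^{(I)}}$ inside the smaller algebra $\core(M)$, i.e.\ showing that an intertwiner of $A$ into $L_\om(\R)$ can always be localized — after cutting by a central projection of the normalizer — to an intertwiner into a single $L_{\vphi_a}(\R)$. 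This is where the specific product structure $\mu_a = \prod_{i} a_i \cdot \zeta$ and the fact that all but finitely many $a_i$ are trivial must be used to reduce an a priori infinite "direction" $\om$ to a finitely-supported one.
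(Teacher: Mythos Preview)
Your setup is correct: embed $M$ into $\cN = (P,\om)^I \rtimes \Gamma$ via Lemma~\ref{lem.canonical-embedding}, apply Theorem~\ref{thm.def-rig-nc-bernoulli}, and split along the central projection $z$. But the two crucial reductions that follow are not handled correctly.

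\textbf{The $z'$ case.} From $Qz' \prec_{\core(\cN)} L_\om(\R) \vee L(\Gamma)$ you want $Az' \prec_{\core(\cN)} L_\om(\R) \vee L(\Lambda)$. This does not follow from amenability of $G$ alone; you need to view $L_\om(\R) \vee L(\Gamma)$ as the amalgamated free product $(L(G)\ovt B)\ast_B (L(\Lambda)\ovt B)$ with $B = L_\om(\R)$ and invoke the Chifan--Houdayer results \cite[Theorems~2.4 and~4.2]{CH08}. The argument uses that $Q$ contains the commuting pair $A$ and $A'\cap p\core(\cN)p$, the latter having no amenable direct summand: if $Az' \not\prec B\vee L(\Lambda)$ then \cite[Theorem~4.2]{CH08} forces $Az' \prec B\vee L(G)$, and since $Az' \not\prec B$ (by maximality of $z$), \cite[Theorem~2.4]{CH08} gives $A'\cap p\core(\cN)p \prec B\vee L(G)$, contradicting nonamenability.

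\textbf{The transfer from $\core(\cN)$ to $\core(M)$.} This is where your proposal goes wrong. You want to push an intertwiner of $A$ into $L_\om(\R)$ (which lives in $\core(\cN)$) down to an intertwiner into some $L_{\vphi_a}(\R)$ inside $\core(M)$. There is no mechanism for this: conditional expectations kill partial isometries, and there is no ``finitely supported'' truncation of such an intertwiner. The paper instead argues by contraposition. One proves the single statement $A \prec_{\core(\cN)} L(\Lambda)\vee L_\vphi(\R)$ (combining the $z$ and $z'$ cases), and then shows: if \emph{both} conclusions of the lemma fail in $\core(M)$, one can choose a single sequence $w_n \in \cU(A)$ witnessing $A\not\prec_{\core(M)} L(\Lambda)\vee L_\vphi(\R)$ and $A\not\prec_{\core(M)} L_{\vphi_a}(\R)$ for every $a$. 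The point is that $\cN = L^\infty(X)\rtimes \cG$ with $\cG = G\wr_I\Gamma$, so $L^2(\core(\cN))$ is spanned by $u_a\,\core(M)\,u_b$ with $a,b\in G^{(I)}$; since $L_{\vphi_a}(\R) = u_a L_\vphi(\R) u_a^*$, a direct computation of $E_{L(\Lambda)\vee L_\vphi(\R)}(u_a^* x u_b)$ for $x\in\core(M)$ reduces to finitely many terms of the form $E_{L_{\vphi_a}(\R)}(x\,u_\lambda^*)$, and the same $w_n$ then witness $A\not\prec_{\core(\cN)} L(\Lambda)\vee L_\vphi(\R)$, a contradiction.

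\textbf{The biexact case.} Your argument via biexactness of $\Gamma$ is off target: what matters is that once you know $Az' \prec_{\core(\cN)} B\vee L(\Lambda) \cong L(\Lambda)\ovt B$, biexactness of $\Lambda$ and \cite[Propositions~11 and~12]{OP03} force any subalgebra with nonamenable relative commutant to intertwine into $B$. Hence $Az'\prec B$ as well, so in all cases $A\prec_{\core(\cN)} L_\vphi(\R)$, and the same transfer argument gives conclusion~2. The $z'$ case is not ruled out; it is upgraded.
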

\begin{proof}
Denote $P = L^\infty(Z,\zeta) \rtimes G$ and let $\om$ be the crossed product state on $P$ given by $\zeta$. Write $(N,\om) = (P,\om)^I$ and $\cN = N \rtimes \Gamma$. We still denote by $\om$ the crossed product state on $\cN$. Denote by $\theta : M \to \cN$ the embedding given by Lemma \ref{lem.canonical-embedding}. Note that there is a unique faithful normal conditional expectation $E : \cN \to \theta(M)$ such that $\vphi \circ \theta^{-1} \circ E = \om$. For the rest of the proof, we view $M$ as a subalgebra of $\cN$ and no longer write the canonical embedding $\theta$. We then also replace the notation $\om$ by $\vphi$.

The conditional expectation $E$ induces an embedding $\core(M) \subset \core(\cN)$. Write $B = L_\vphi(\R)$ and $Q = \cN_{p \core(\cN) p}(A)\dpr$. Since $A' \cap p \core(\cN) p \supset A' \cap p \core(M) p$ has no amenable direct summand, by Theorem \ref{thm.def-rig-nc-bernoulli}, we can take a projection $z \in \cZ(Q)$ such that $Az \prec_{f,\core(\cN)} B$ and, with $z' = p - z$, we have $Q z' \prec_{f,\core(\cN)} B \vee L(\Gamma)$.

Assume that $z' \neq 0$. We prove that $A z' \prec_{\core(\cN)} B \vee L(\Lambda)$. Assume the contrary. In particular, $Q z' \not\prec_{\core(\cN)} B \vee L(\Lambda)$. We may view $B \vee L(\Gamma)$ as the tensor product $L(G \ast \Lambda) \ovt B$ and hence, also as the amalgamated free product $(L(G) \ovt B) \ast_B (L(\Lambda) \ovt B)$. Since $Q$ contains the commuting subalgebras $A$ and $A' \cap p \core(\cN) p$, and since $A' \cap p \core(\cN) p$ has no amenable direct summand, our assumption that $A z' \not\prec_{\core(\cN)} B \vee L(\Lambda)$ and \cite[Theorem 4.2]{CH08} imply that $A z' \prec_{\core(\cN)} B \vee L(G)$. Our assumption says in particular that $A z' \not\prec_{\core(\cN)} B$, so that \cite[Theorem 2.4]{CH08} implies that $A' \cap p \core(\cN) p \prec_{\core(\cN)} B \vee L(G)$. This is a contradiction because $B \vee L(G)$ is amenable, while $A' \cap p \core(\cN) p$ has no amenable direct summand.

Since $z$ and $z'$ cannot be both equal to zero, we have proven that $A \prec_{\core(\cN)} L(\Lambda) \vee L_\vphi(\R)$. To conclude the proof of the lemma, we assume that none of the two statements in the lemma hold and prove that $A \not\prec_{\core(\cN)} L(\Lambda) \vee L_\vphi(\R)$. Since the two statements in the lemma do not hold, we can choose a sequence of unitaries $w_n \in \cU(A)$ such that
\begin{equation}\label{eq.we-have-this}
\|E_{L(\Lambda) \vee L_\vphi(\R)}(x^* w_n y)\|_{2,\Tr} \to 0 \quad\text{and}\quad \|E_{L_{\vphi_a}(\R)}(x^* w_n y)\|_{2,\Tr} \to 0
\end{equation}
for all $a \in G^{(I)}$ and $x,y \in p \core(M)$. Here, all conditional expectations are the unique trace preserving ones. It suffices to prove that
\begin{equation}\label{eq.goal-here}
\|E_{L(\Lambda) \vee L_\vphi(\R)}(x^* w_n y)\|_{2,\Tr} \to 0 \quad\text{for all $x,y \in p \core(\cN)$.}
\end{equation}
As in the proof of Theorem \ref{thm.main3}, we denote by $\Gamma \actson^\al G^{(I)}$ the natural action by translation, define the generalized wreath product group $\cG = G \wr_I \Gamma = G^{(I)} \rtimes_\al \Gamma$ and view $\cN$ as the crossed product $L^\infty(X) \rtimes \cG$. Define $i_0 \in I$ as the coset $i_0 = G$. Under this identification, $M = L^\infty(X) \rtimes \theta(\Gamma)$, where $\theta : \Gamma \to \cG$ is the injective group homomorphism uniquely determined by $\theta(g) = \pi_{i_0}(g) \, g$ and $\theta(\lambda) = \lambda$ for all $g \in G$, $\lambda \in \Lambda$. In particular, every $a \in G^{(I)}$ gives rise to a canonical unitary $u_a \in N$. We have $\vphi_a \circ E = \vphi \circ \Ad u_a^*$, so that $L_{\vphi_a}(\R) = u_a \, L_\vphi(\R) \, u_a^*$. By density, it suffices to prove \eqref{eq.goal-here} for $x = x_0 u_a$ and $y = y_0 u_b$ with $x_0, y_0 \in p \core(M)$ and $a,b \in G^{(I)}$.

If $a=b=e$, then \eqref{eq.goal-here} follows immediately from \eqref{eq.we-have-this}. When $a$ and $b$ are not both equal to $e$, the set $\cF = \{\lambda \in \Lambda \mid \al_\lambda(b) = a\}$ is finite. Also, for $g \in \Gamma$, we have that $a^{-1} \theta(g) b \in \theta(\Lambda) = \Lambda$ if and only if $g \in \cF$. So, if $\cF = \emptyset$, we find that
$$E_{L(\Lambda) \vee L_\vphi(\R)}(u_a^* x_0^* w_n y_0 u_b) = 0$$
for all $n \in \N$. When $\cF \neq \emptyset$, a direct computation shows that for every $x_1 \in \core(M)$,
$$E_{L(\Lambda) \vee L_\vphi(\R)}(u_a^* x_1 u_b) = \sum_{\lambda \in \cF} u_a^* \, E_{L_{\vphi_a}(\R)}(x_1 \, u_\lambda^*) \, u_a \, u_\lambda \; .$$
Since for every $\lambda \in \cF$, we have by \eqref{eq.we-have-this} that $\|E_{L_{\vphi_a}(\R)}(x_0^* w_n y_0 u_\lambda^*)\|_{2,\Tr} \to 0$, again \eqref{eq.goal-here} follows. So the first part of the lemma is proven.

Finally assume that $\Lambda$ is moreover biexact. Above, we have seen that $Q z' \prec_{\core(\cN)} B \vee L(\Lambda)$. We can view $B \vee L(\Lambda)$ as the tensor product $L(\Lambda) \ovt B$, where $B$ is abelian. By \cite[Propositions 11 and 12]{OP03}, any von Neumann subalgebra $D$ of a corner of $L(\Lambda) \ovt B$ having a nonamenable relative commutant, intertwines into $B$. So we find that $A z' \prec_{\core(\cN)} B$. Since $z$ and $z'$ cannot be both equal to zero, we get that $A \prec_{\core(\cN)} L_\vphi(\R)$. The argument above then shows that $A \prec_{\core(M)} L_{\vphi_a}(\R)$ for some $a \in G^{(I)}$.
\end{proof}

\begin{proof}[{Proof of Theorem \ref{thm.solid-more-general}}]
We start by proving that $M$ is solid relative to $L(\Lambda)$. It suffices to prove the following statement: if $e \in M$ is a projection and $A \subset eMe$ is a diffuse abelian von Neumann subalgebra with expectation such that the relative commutant $Q = A' \cap eMe$ has no amenable direct summand, then $Q \prec_M L(\Lambda)$. Fix a faithful normal conditional expectation $E : eMe \to A$ and choose a faithful normal state $\psi$ on $A$. We still denote by $\psi$ the state $\psi \circ E$ on $eMe$. We identify $\core(eMe) = e \core(M) e$. Fix a nonzero projection $p \in L_\psi(\R)$ of finite trace. Then, $A p$ and $p \core_\psi(Q) p$ are commuting von Neumann subalgebras of $p \core(M) p$ and $p \core_\psi(Q) p$ has no amenable direct summand.

By Lemma \ref{lem.solid-more-general} and using the notation introduced in that lemma, one of the following statements holds.
\begin{itemlist}
\item $A p \prec_{\core(M)} L(\Lambda) \vee L_\vphi(\R)$.
\item $A p \prec_{\core(M)} L_{\vphi_a}(\R)$ for some $a \in G^{(I)}$.
\end{itemlist}
We claim that the second statement does not hold. Since $A$ is diffuse, we can choose a sequence $w_n \in \cU(A)$ such that $w_n \to 0$ weakly. Whenever $x_0,y_0 \in M$ and $x_1,y_1 \in L_{\vphi_a}(\R)$, we get that
$$E_{L_{\vphi_a}(\R)}(x_1^* x_0^* \, w_n \, y_0 y_1) = x_1^* \, \vphi_a(x_0^* \, w_n  y_0) \, y_1 \; .$$
Since $w_n \to 0$ weakly, it follows by density that $\|E_{L_{\vphi_a}(\R)}(x^* \, w_n \, y)\|_{2,\Tr} \to 0$ for all $x,y \in \core(M)$ with $\Tr(x^*x) < +\infty$ and $\Tr(y^* y) < +\infty$. In particular, $\|E_{L_{\vphi_a}(\R)}(x^* \, w_n p \, y)\|_{2,\Tr} \to 0$ for all $x,y \in \core(M)$. So, the claim is proven. It follows that $A p \prec_{\core(M)} L(\Lambda) \vee L_\vphi(\R)$.

We now claim that $A \prec_M L(\Lambda)$. Assume the contrary. Denote by $E_{L(\Lambda)} : M \to L(\Lambda)$ the unique $\vphi$-preserving conditional expectation. Since $A$ is abelian and $A \not\prec_M L(\Lambda)$, we can take a sequence of unitaries $w_n \in \cU(A)$ such that $E_{L(\Lambda)}(x^* w_n y) \to 0$ $*$-strongly, for all $x,y \in M$. If now $x_0,y_0 \in M$ and $x_1,y_1 \in L_\vphi(\R)$, we get that
$$E_{L(\Lambda) \vee L_\vphi(\R)}(x_1^* x_0^* \, w_n \, y_0 y_1) = x_1^* \, E_{L(\Lambda)}(x_0^* \, w_n \, y_0) \, y_1 \; .$$
By density, we get that $\|E_{L(\Lambda) \vee L_\vphi(\R)}(x^* \, w_n \, y)\|_{2,\Tr}$ for all $x,y \in \core(M)$ with $\Tr(x^*x) < +\infty$ and $\Tr(y^* y) < +\infty$. In particular, $\|E_{L(\Lambda) \vee L_\vphi(\R)}(x^* \, w_n p \, y)\|_{2,\Tr} \to 0$ for all $x,y \in \core(M)$. This contradicts the statement that $A p \prec_{\core(M)} L(\Lambda) \vee L_\vphi(\R)$. So, the claim that $A \prec_M L(\Lambda)$ is proven.

Choose projections $r \in A$ and $s \in L(\Lambda)$, a nonzero partial isometry $v \in r M s$ and a unital normal $*$-homomorphism $\theta : r A r \to s L(\Lambda) s$ such that $a v = v \theta(a)$ for all $a \in r A r$. Denote $D = \theta(r A r)' \cap s M s$. Let $\Theta : M \to P^I \rtimes \Gamma$ be the embedding given by Lemma \ref{lem.canonical-embedding}. Then, $\Theta(\theta(rAr))$ is a diffuse von Neumann subalgebra of a corner of $L(\Lambda)$. Since $\Lambda \cap \Stab i = \{e\}$ for every $i \in I$ and $\Theta(\theta(rAr))$ is diffuse, we get for every $i \in I$ that $\Theta(\theta(rAr)) \not\prec_{L(\Gamma)} L(\Stab i)$. It then follows from Proposition \ref{prop.control-normalizer} that $\Theta(D) \subset L(\Gamma)$. Since $\Theta(M) \cap L(\Gamma) = L(\Lambda)$, we conclude that $D \subset s L(\Lambda) s$. In particular $s_1 = v^* v$ belongs to $L(\Lambda)$. By construction, $v^* Q v \subset D$ and $r_1 = vv^*$ belongs to $Q$. In particular, $Q \prec_M L(\Lambda)$. So we have proven that $M$ is solid relative to $L(\Lambda)$.

If $\Lambda$ is biexact, then $L(\Lambda)$ is solid by \cite{Oza03}. Since $M$ is solid relative to $L(\Lambda)$, it then follows that $M$ is solid, when $\Lambda$ is biexact.

We next prove that $\Gamma \actson (X,\mu)$ is a solid action. Choose a diffuse von Neumann subalgebra $A \subset L^\infty(X)$. We have to prove that $A' \cap M$ is amenable. Assume that $A' \cap M$ is nonamenable. Since $L^\infty(X) \subset M$ is an inclusion with expectation, also $A \subset M$ and $A' \cap M$ are inclusions with expectation. Since $A \subset \cZ(A' \cap M)$ and $M$ is solid relative to $L(\Lambda)$, we find that $A' \cap M \prec_M L(\Lambda)$. A fortiori, $A \prec_M L(\Lambda)$. On the other hand, since $A$ is diffuse, we can take a sequence of unitaries $w_n \in \cU(A)$ such that $w_n \to 0$ weakly. For all $g,h \in \Gamma$ and $x,y \in L^\infty(X)$, we have
$$E_{L(\Lambda)}(u_g^* x^* \, w_n \, y u_h) = \begin{cases} \vphi(x^* \, w_n \, y) \; u_g^* u_h &\;\;\text{if $g^{-1} h \in \Lambda$,}\\ 0 &\;\;\text{otherwise.}\end{cases}$$
The conditional expectation $E_{L(\Lambda)}$ is $\vphi$-preserving and the restriction of $\vphi$ to $L(\Lambda)$ is the canonical trace on $L(\Lambda)$. By density, we find that $\|E_{L(\Lambda)}(x^* \, w_n \, y)\|_{2,\vphi} \to 0$ for all $x, y \in M$. So, $A \not\prec_M L(\Lambda)$. This contradiction concludes the proof that $\Gamma \actson (X,\mu)$ is a solid action.

Finally assume that $\Lambda$ is biexact and that $(\log d(g \cdot \zeta)/d\zeta)_*(\zeta)$ is nonatomic for every $g \in G \setminus \{e\}$. Let $\psi$ be a faithful normal state on $M$ such that $M^\psi$ is nonamenable. Take a nonzero central projection $e \in \cZ(M^\psi)$ such that $M^\psi e$ has no amenable direct summand. Fix a finite trace projection $q \in L_\psi(\R) \subset \core(M)$ such that the projection $p = e q$ is nonzero. Then, $L_\psi(\R) p$ is a von Neumann subalgebra of $p \core(M) p$ whose relative commutant contains $M^\psi p$ and hence, has no amenable direct summand. By the second part of Lemma \ref{lem.solid-more-general}, we find $a \in G^{(I)}$ such that $L_\psi(\R) p \prec_{\core(M)} L_{\vphi_a}(\R)$. By Lemma \ref{lem.HSV}, it follows that $\psi \prec \vphi_a$. In particular, $\vphi_a$ has a nonamenable centralizer.

Denote by $\Gamma \actson^\al G^{(I)}$ the action by translation. Let $i_0 \in I$ be the coset $G$. Denote by $\pi_i : G \to G^{(I)}$ the embedding in the $i$'th coordinate. A map $c : \Gamma \to G^{(I)}$ is called an $\al$-cocycle if $c(gh) = c(g) \, \al_g(c(h))$ for all $g, h \in \Gamma$. Let $c : \Gamma \to G^{(I)}$ be the unique $\al$-cocycle satisfying $c(g) = \pi_{i_0}(g)$ for all $g \in G$ and $c(\lambda) = e$ for all $\lambda \in \Lambda$. A direct computation gives that $g \cdot \mu_a = \mu_{c(g) \, \al_g(a)}$ for all $g \in \Gamma$. Define the subgroup $L \subset \Gamma$ by
$$L = \{g \in \Gamma \mid c(g) = a \, \al_g(a^{-1}) \} \; .$$
Since $(\log d(g \cdot \zeta)/d\zeta)_*(\zeta)$ is nonatomic for every $g \in G \setminus \{e\}$, we get that $(d \mu_b / d \mu_c)(x) \neq 1$ for a.e.\ $x \in X$ and all $b \neq c$ in $G^{(I)}$. It follows that $(d(g \cdot \mu_a) / d\mu_a)(x) \neq 1$ for a.e.\ $x \in X$ and all $g \in \Gamma \setminus L$. It follows that $M^{\vphi_a}$ is a von Neumann subalgebra with expectation of $L^\infty(X) \rtimes L$. Since $M^{\vphi_a}$ is nonamenable, we conclude that $L$ is a nonamenable group.

For every $b \in G^{(I)}$, we denote $|b| = \# \{i \in I \mid b_i \neq e\}$. We also define for every $g \in \Gamma = G \ast \Lambda$, the $G$-length $|g|_G$ as the minimal number of elements in $G$ one needs when writing $g$ as a product of elements in $G$ and elements in $\Lambda$. Thus, $|\lambda|_G = 0$ for all $\lambda \in \Lambda$ and $|g|_G = n$ whenever $g = \lambda_0 g_1 \lambda_1 \cdots \lambda_{n-1} g_n \lambda_n$ with $g_i \in G \setminus \{e\}$ for all $i$, $\lambda_i \in \Lambda \setminus \{e\}$ for all $i \in \{1,\ldots,n-1\}$ and $\lambda_0,\lambda_n \in \Lambda$. Another direct computation shows that $|c(g)| = |g|_G$ for all $g \in \Gamma$.

For all $g \in L$, we have that $|g|_G = |c(g)| = |a \, \al_g(a^{-1})| \leq 2 \, |a|$. Hence, $g \mapsto |g|_G$ is bounded on $L$. Since $L$ is nonamenable, this implies that $L = g_0 \Lambda_0 g_0^{-1}$ for some $g_0 \in \Gamma$, where $\Lambda_0 \subset \Lambda$ is a nonamenable subgroup. Write $b = c(g_0)$. Since $c(\lambda) = e$ for all $\lambda \in \Lambda$, we get that $c(g) = b \, \al_g(b^{-1})$ for all $g \in g_0 \Lambda g_0^{-1}$. Hence, $a \, \al_g(a^{-1}) = b \, \al_g(b^{-1})$ for all $g \in L$. This means that $\al_g(b^{-1} a) = b^{-1} a$ for all $g \in L$. Since the nonamenable group $L$ acts with infinite orbits on $I$, we conclude that $a = b$. So, $\mu_a = \mu_{c(g_0)} = g_0 \cdot \mu$. Using the unitary $u_{g_0} \in M$, it follows that $\vphi_a$ and $\vphi$ are unitarily conjugate. We already proved that $\psi \prec \vphi_a$. It follows that $\psi \prec \vphi$. We have thus proven that $\vphi$ is a solid state on $M$.
\end{proof}

%

Having proven Theorem \ref{thm.main-solid}, it is tempting to believe that for any nonsingular Bernoulli action $\Gamma \actson (X,\mu)$ of a biexact group, the crossed product state $\vphi_\mu$ on $M = L^\infty(X,\mu) \rtimes \Gamma$ is a solid state. Having proven Theorem \ref{thm.main}, it is equally tempting to believe that one may recover the measure class $\class(\vphi_\mu)$ as an isomorphism invariant for any nonsingular Bernoulli crossed product $L^\infty(X,\mu) \rtimes \Gamma$ whenever $\mu$ is $\Lambda$-invariant for a nonamenable subgroup $\Lambda \subset \Gamma$. The following example shows that both statements are wrong. The example is very similar to the construction in \eqref{eq.my-bernoulli}, except that we consider the free product of two nonamenable groups.

\begin{example}\label{ex.not-solid}
Let $\Gamma = \Gamma_1 \ast \Gamma_2$ be an arbitrary free product of two countable nonamenable groups. The following construction provides a nonsingular Bernoulli action $\Gamma \actson (X,\mu)$ with the following properties.
\begin{enumlist}
\item The crossed product state $\vphi_\mu$ on $M = L^\infty(X,\mu) \rtimes \Gamma$ is not a solid state.
\item The measure $\mu$ is $\Gamma_1$-invariant. There exists an equivalent product measure $\mu' \sim \mu$ that is $\Gamma_2$-invariant. The measure classes $\class(\vphi_\mu)$ and $\class(\vphi_{\mu'})$ are not equivalent.
\end{enumlist}
Since $\Gamma_2$ is nonamenable, not every element of $\Gamma_2$ has order $2$. Fix an element $a \in \Gamma_2$ of order at least $3$ (and possibly of infinite order). Define the map $\pi : \Gamma \to \Gamma_2$ by $\pi(h) = h$ for all $h \in \Gamma_2$ and $\pi(w h) = h$ whenever $h \in \Gamma_2$ and $w \in \Gamma$ is a reduced word in the free product $\Gamma_1 \ast \Gamma_2$ ending with a letter from $\Gamma_1 \setminus \{e\}$. Let $Y$ be a standard Borel space with equivalent probability measures $\nu \sim \eta$ on $Y$. Assume that $\nu$ and $\eta$ are not concentrated on a single atom. We will specify these measures later.

Define the subset $W \subset \Gamma$ by $W = \pi^{-1}(\{e,a\})$. For every $g \in \Gamma$, define $\mu_g = \nu$ if $g \in W$ and $\mu_g = \eta$ if $g \in \Gamma \setminus W$. Since $\pi(g v) = \pi(v)$ for all $g \in \Gamma_1$ and $v \in \Gamma$, we have that $g W = W$ for all $g \in \Gamma_1$. When $h \in \Gamma_2$, one has $h W \setminus W = \{h , ha\} \setminus \{e,a\}$ and $W \setminus h W = \{e,a\} \setminus \{h,ha\}$. We conclude that $g W \sdif W$ is a finite set for every $g \in \Gamma$. So, $\Gamma \actson (X,\mu) = \prod_{g \in \Gamma} (Y,\mu_g)$ is a nonsingular Bernoulli action. The action is essentially free. By construction, the measure $\mu$ is $\Gamma_1$-invariant. So, $\Gamma_1 \actson (X,\mu)$ is a pmp Bernoulli action, which is thus ergodic. A fortiori, $\Gamma \actson (X,\mu)$ is ergodic.

Next define $W' = W \setminus \Gamma_2 = W \setminus \{e,a\}$. Define $\mu'_g = \nu$ if $g \in W'$ and $\mu'_g = \eta$ if $g \in \Gamma \setminus W'$. Define the product measure $\mu' = \prod_{g \in \Gamma} \mu'_g$. Since $W' \sdif W$ is a finite set, we have that $\mu' \sim \mu$. We now have by construction that $h W' = W'$ for all $h \in \Gamma_2$. So, the measure $\mu'$ is $\Gamma_2$-invariant. Also note that for every $g \in \Gamma_1 \setminus \{e\}$, we have that $g W' \setminus W' = \{e,a\}$ and $W' \setminus g W' = \{g,ga\}$.

Since $\Gamma_1 \actson (X,\mu)$ and $\Gamma_2 \actson (X,\mu')$ are ergodic, the centralizer of both states $\vphi_\mu$ and $\vphi_{\mu'}$ is a nonamenable factor. We prove that for the appropriate choice of $\nu$ and $\eta$, we have $\class(\vphi_\mu) \not\prec \class(\vphi_{\mu'})$. It then follows from point 2 of Proposition \ref{prop.class} that $\vphi_\mu \not\prec \vphi_{\mu'}$. Since the relation $\prec$ between states is symmetric, also $\vphi_{\mu'} \not\prec \vphi_\mu$, so that $\vphi_\mu$ is not a solid state.

The measure classes $\class(\vphi_\mu)$ and $\class(\vphi_{\mu'})$ can be easily computed as follows. Define $\gamma = (\log d \nu / d \eta)_*(\eta)$. For every $g \in \Gamma$, we have that
\begin{alignat*}{2}
& (\log d(g \cdot \mu) / d \mu)_*(\mu) \sim \gamma^{\ast k} \ast \gammatil^{\ast l}  \quad &&\text{with $k = |g W \setminus W|$ and $l = |W \setminus g W|$,}\\
& (\log d(g \cdot \mu') / d \mu')_*(\mu') \sim \gamma^{\ast k} \ast \gammatil^{\ast l}  \quad &&\text{with $k = |g W' \setminus W'|$ and $l = |W' \setminus g W'|$.}
\end{alignat*}
A direct computation shows that $|g W \setminus W| = |W \setminus gW|$ for all $g \in \Gamma$ and that all elements of $\{0,1,2,\ldots\}$ appear as values. On the other hand, $|g W' \setminus W'| = |W' \setminus gW'|$ for all $g \in \Gamma$ but only the elements of $\{0,2,3,\ldots\}$ appear as values. We conclude that
$$\class(\vphi_\mu) = \delta_0 \vee \bigvee_{k=1}^\infty (\gamma \ast \gammatil)^{\ast k} \quad\text{and}\quad \class(\vphi_{\mu'}) = \delta_0 \vee \bigvee_{k=2}^\infty (\gamma \ast \gammatil)^{\ast k} \; .$$
Assume that $\gamma$ is nonatomic and that $K \subset \R$ is an independent Borel set such that $\gamma(K) > 0$. Denote by $\gamma_0$ the restriction of $\gamma$ to $K$. Clearly, $\gamma_0 \ast \widetilde{\gamma_0} \prec \class(\vphi_\mu)$. We claim that $\gamma_0 \ast \widetilde{\gamma_0}$ is orthogonal to $\class(\vphi_{\mu'})$. To prove this claim, it suffices to observe that for every $x \in \R \setminus \{0\}$, the set $(x + (K-K)) \cap (K-K)$ is contained in finitely many translates of $K \cup (-K)$. Arguing as in the proof of Lemma \ref{lem.indep-K}, it follows that $(\eta \ast \gamma \ast \gammatil)(K-K) = 0$ for every nonatomic probability measure $\eta$. So, the restriction of $\class(\vphi_{\mu'})$ to $K-K$ equals $\delta_0$. On the other hand, $\gamma_0 \ast \widetilde{\gamma_0}$ is a nonatomic probability measure that is concentrated on $K-K$, hence proving the claim.

Using the construction around \eqref{eq.setP}, we can give concrete examples where $\gamma$ is a nonatomic probability measure that is supported on $K$.
\end{example}

\section{Conjugacy results and proof of Proposition \ref{prop.some-isomorphism}}

In this section, we prove Proposition \ref{prop.some-isomorphism}. We use the following well known lemma and provide a proof for completeness.

\begin{lemma}\label{lem.dissipative-Z}
Let $\eta \sim \nu$ be equivalent, but distinct probability measures on the standard Borel space $Y$. Define $\mu_n = \nu$ when $n \in \N$ and $\mu_n = \eta$ when $n \in \Z \setminus \N$. Then, the nonsingular Bernoulli action
$$\Z \actson (Z,\zeta) = \prod_{n \in \Z} (Y,\mu_n)$$
is totally dissipative.
\end{lemma}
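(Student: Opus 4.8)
The plan is to use the classical criterion for dissipativity of nonsingular Bernoulli shifts in terms of summability of the Radon--Nikodym cocycle, which goes back to Hamachi and is worked out carefully in \cite{Kos09,BKV19}. Concretely, for the shift $\Z \actson (Z,\zeta) = \prod_{n} (Y,\mu_n)$, total dissipativity is equivalent to the statement that for (equivalently, for some) $k \in \Z$, the Radon--Nikodym derivative $d(k \cdot \zeta)/d\zeta$ has finite integral against $\zeta$, i.e.\ $\int_Z \frac{d(k\cdot\zeta)}{d\zeta}\,d\zeta < +\infty$ for all $k \neq 0$; more robustly, one shows that $\sum_{k \in \Z} \zeta\bigl(\{x : \frac{d(k\cdot\zeta)}{d\zeta}(x) \geq \eps\}\bigr) < \infty$ for every $\eps > 0$, which is one of the standard reformulations of the Hopf dissipativity criterion via Borel--Cantelli. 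The key point that makes everything finite here is that only the ``boundary between $\N$ and $\Z\setminus\N$'' moves under the shift: since $\mu_n$ takes only the two values $\nu$ and $\eta$ and the pattern is a single step function, $k \cdot \zeta$ differs from $\zeta$ in only $|k|$ coordinates.

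First I would compute the cocycle explicitly. For $k \geq 1$, we have $(k \cdot \zeta) = \prod_n \mu_{n-k}$, so $\mu_{n-k}$ and $\mu_n$ differ exactly for $n \in \{1,\dots,k\}$ (where one is $\nu$ and the other $\eta$, or vice versa depending on orientation), hence
\[
\frac{d(k\cdot\zeta)}{d\zeta}(x) = \prod_{n=1}^{k} \frac{d\eta}{d\nu}(x_n) \quad\text{or}\quad \prod_{n=1}^{k}\frac{d\nu}{d\eta}(x_n),
\]
a product of $|k|$ independent terms. Then $\int_Z \frac{d(k\cdot\zeta)}{d\zeta}\,d\zeta = \prod_{n=1}^{|k|}\int_Y \frac{d\eta}{d\nu}\,d\nu = 1$, which on its own is not enough; instead I would estimate the tail probabilities. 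Writing $f = \log(d\nu/d\eta) \in L^1(Y,\nu)$ (and also, after the change of orientation, $\log(d\eta/d\nu) \in L^1(Y,\eta)$), the log-derivative of $k \cdot \zeta$ is a sum $S_k = \sum_{n=1}^{|k|} f(x_n)$ of i.i.d.\ copies with mean $m := -\int f\,d\nu \cdot(\text{sign adjustment})$; the crucial observation, since $\nu \neq \eta$, is that this mean is \emph{strictly negative} (it equals $-\mathrm{KL}(\nu\|\eta) < 0$ by Jensen, strictly because $\nu\neq\eta$). Therefore a law-of-large-numbers / large-deviations argument gives $\zeta\bigl(\{S_k \geq -|k|\,|m|/2\}\bigr) \to 0$ exponentially (or at least summably, after passing to the appropriate almost-sure statement via the strong law), and combined with the trivial bound this yields $\sum_k \zeta\bigl(\{\frac{d(k\cdot\zeta)}{d\zeta} \geq \eps\}\bigr) < \infty$. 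By the Hopf decomposition and Borel--Cantelli, the conservative part is null, i.e.\ the action is totally dissipative.

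The main obstacle is the last step: getting genuine summability of the tail probabilities rather than just $\to 0$. If $f$ is not assumed to have better-than-$L^1$ integrability, a bare large-deviations estimate is unavailable, and one must instead invoke the sharper dissipativity criterion: by a theorem of Kosloff (see \cite{Kos09}) and the earlier work cited in \cite{BKV19}, the Bernoulli shift with base measures taking only finitely many values in a ``step'' pattern is dissipative precisely because the negative drift of the cocycle forces $\frac{d(k\cdot\zeta)}{d\zeta} \to 0$ $\zeta$-a.e.\ along $|k|\to\infty$, and the conservative part is characterized by $\limsup_k \frac{d(k\cdot\zeta)}{d\zeta} > 0$ on a positive measure set; the strong law of large numbers applied to $S_k/|k| \to m < 0$ immediately gives $\frac{d(k\cdot\zeta)}{d\zeta} = e^{S_k} \to 0$ a.e., and in fact $\sum_k e^{S_k} < \infty$ a.e.\ since $e^{S_k} \leq e^{-|k|\,|m|/2}$ eventually a.e., which is the precise condition for the orbit of a.e.\ point to be dissipative. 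Hence the conservative part has measure zero and the action is totally dissipative. I would present the argument cleanly by isolating the statement ``$S_k/|k| \to -\mathrm{KL}(\nu\|\eta)$ $\zeta$-a.s.\ and $\mathrm{KL}(\nu\|\eta) > 0$ since $\nu \neq \eta$'' and then quoting the dissipativity criterion of \cite{Kos09,BKV19} to conclude.
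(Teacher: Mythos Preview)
Your argument is correct in its final form: the log Radon--Nikodym cocycle $S_k = \log \frac{d(k\cdot\zeta)}{d\zeta}$ is an i.i.d.\ random walk whose step has strictly negative mean (by Jensen, $\int_Y \log(d\eta/d\nu)\,d\nu = -\mathrm{KL}(\nu\|\eta) < 0$ for $k\to+\infty$, and symmetrically $-\mathrm{KL}(\eta\|\nu)<0$ for $k\to-\infty$), the strong law gives $S_k/|k| \to m < 0$ a.s., hence $e^{S_k} \leq e^{-|k||m|/2}$ eventually a.e.\ and $\sum_k e^{S_k} < \infty$ a.e., which is exactly the Hopf criterion for total dissipativity. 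One technical point to clean up: you write $f = \log(d\nu/d\eta) \in L^1(Y,\nu)$, but this need not hold. What is always true, and all you need for the SLLN (with mean possibly $-\infty$), is that the \emph{positive} part $(\log(d\eta/d\nu))^+$ lies in $L^1(Y,\nu)$, which follows from $\log^+ t \leq t$ and $\int_Y (d\eta/d\nu)\,d\nu = 1$.

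The paper takes a genuinely different and much shorter route. Since $\nu \neq \eta$, one chooses a Borel set $U\subset Y$ with $\nu(U)\neq\eta(U)$; the coarse-graining $Y \to \{0,1\}$, $y \mapsto 1_{Y\setminus U}(y)$, is a $\Z$-equivariant measure-preserving factor map onto the two-point Bernoulli shift $\prod_{n}(\{0,1\},\gamma_n)$ with the same step pattern, and Hamachi's theorem \cite[Theorem~1]{Ham81} says that two-point shift is totally dissipative. Total dissipativity then passes to the extension (pull back a fundamental domain). So the paper outsources all the analysis to \cite{Ham81}, whose proof is essentially the random-walk/SLLN argument you carried out. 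Your approach is more self-contained and exposes the mechanism (negative drift of the cocycle); the paper's is a one-line reduction to a known special case.
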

\begin{proof}
Since $\eta \neq \nu$, we can choose a Borel set $U \subset Y$ such that $\eta(U) \neq \nu(U)$. Since $\eta \sim \nu$, we have that $\eta(U)$ and $\nu(U)$ are different from $0$ and $1$. Define the probability measures $\gamma_n$ on $\{0,1\}$ by $\gamma_n(0) = \nu(U)$ for $n \in \N$ and $\gamma_n(0) = \eta(U)$ for $n \in \Z \setminus \N$. Define the factor map $\pi : Y \to \{0,1\}$ by $\pi(y) = 0$ iff $y \in U$. By construction, $\pi_*(\mu_n) = \gamma_n$ for all $n \in \Z$. So, the nonsingular Bernoulli action $\Z \actson \prod_{n \in \Z} (\{0,1\},\gamma_n)$ is a factor of $\Z \actson (Z,\zeta)$. By \cite[Theorem 1]{Ham81}, the former is totally dissipative, so that also $\Z \actson (Z,\zeta)$ is totally dissipative.
\end{proof}

\begin{proof}[{Proof of Proposition \ref{prop.some-isomorphism}}]
Write $\mu_{i,n} = \nu_i$ when $n \in \N$ and $\mu_{i,n} = \eta_i$ when $n \in \Z \setminus \N$. Consider the nonsingular Bernoulli actions $\Z \actson^{\be_i} (Z_i,\zeta_i) = \prod_{n \in \Z} (Y_i,\mu_{i,n})$. Since $\Gamma \actson^{\al_i} (X_i,\mu_i)$ is isomorphic with the action associated in \eqref{eq.more-general-nonsingular} to $\Z \actson (Z_i,\zeta_i)$, it suffices to prove that there exists a measure preserving conjugacy between $\be_1$ and $\be_2$.

Denote by $\nu$ the probability measure on $\R$ given by $(\log d\nu_1 / d\eta_1)_*(\nu_1) = (\log d\nu_2 / d\eta_2)_*(\nu_2)$. Since
$$\int_\R \exp(-t) \, d\nu(t) = \int_{Y_i} \frac{d\eta_i}{d\nu_i} \, d\nu_i = 1 \; ,$$
we can define the equivalent probability measure $\eta \sim \nu$ on $\R$ such that $(d\eta / d\nu)(t) = \exp(-t)$. Then $\pi_i = \log d\nu_i / d\eta_i$ is a factor map $\pi_i : Y_i \to \R$ satisfying $(\pi_i)_*(\nu_i) = \nu$ and $(\pi_i)_*(\eta_i) = \eta$ for all $i \in \{1,2\}$.

Define the probability measures $\mu_n$ on $\R$ by $\mu_n = \nu$ when $n \in \N$ and $\mu_n = \eta$ when $n \in \Z \setminus \N$. Consider the nonsingular Bernoulli action $\Z \actson (Z,\zeta) = \prod_{n \in \Z} (\R,\mu_n)$. Then $\psi_i : (Z_i,\zeta_i) \to (Z,\zeta) : (\psi_i(z))_n = \pi_i(z_n)$ is a measure preserving, $\Z$-equivariant factor map.

Denote by $(\nu_{i,t})_{t \in \R}$ the disintegration of $\nu_i$ along the measure preserving factor map $\pi_i : (Y_i,\nu_i) \to (\R,\nu)$. Similarly, denote by $(\eta_{i,t})_{t \in \R}$ the disintegration of $\eta_i$. Then, the disintegration $(\zeta_{i,z})_{z \in Z}$ of $\zeta_i$ along the factor map $\psi_i$ is given by
$$\zeta_{i,z} = \prod_{n \in \N} \nu_{i,z_n} \times \prod_{n \in \Z \setminus \N} \eta_{i,z_n} \; .$$
We assumed that the function $\pi_i$ is not essentially one-to-one. We thus find $\eps > 0$ such that the set
$$U_i = \{z \in \R \mid \;\text{the largest atom of $\nu_{i,z}$ has weight less than $1-\eps$}\;\}$$
has positive measure, $\nu(U_i) > 0$. For $\zeta$-a.e.\ $z \in Z$, there are infinitely many $n \in \N$ with $z_n \in U_i$. It follows that for $\zeta$-a.e.\ $z \in Z$, the product measure $\zeta_{i,z}$ is nonatomic.

Denote by $\lambda$ the Lebesgue measure on $[0,1]$. By the classification of factor maps, at least going back to \cite{Mah83} (see also \cite[Theorem 2.2]{GM87}), we can choose a measure preserving isomorphism $\theta_i : (Z_i,\zeta_i) \to (Z \times [0,1],\zeta \times \lambda)$ such that $p_Z(\theta_i(z)) = \psi_i(z)$ for $\zeta_i$-a.e.\ $z \in Z_i$, where $p_Z(z,t) = z$ for all $(z,t) \in Z \times [0,1]$. We denote $\theta = \theta_2^{-1} \circ \theta_1$ and have found a measure preserving isomorphism $\theta : (Z_1,\zeta_1) \to (Z_2,\zeta_2)$ satisfying $\psi_2(\theta(z)) = \psi_1(z)$ for $\zeta_1$-a.e.\ $z \in Z_1$.

By Lemma \ref{lem.dissipative-Z}, the Bernoulli action $\Z \actson (Z,\zeta)$ is totally dissipative. We can thus choose a Borel set $U \subset Z$ such that the sets $(n \cdot U)_{n \in \Z}$ form a partition of $Z$, up to measure zero. Write $U_i = \psi_i^{-1}(U)$. Then, $U_i \subset Z_i$ is a fundamental domain for the action $\Z \actson^{\be_i} (Z_i,\zeta_i)$. By construction, $\theta(U_1) = U_2$, up to measure zero. We can thus, essentially uniquely, define the nonsingular isomorphism
$$\Theta : Z_1 \to Z_2 : \Theta(n \cdot z) = n \cdot \theta(z) \quad\text{if $z \in U_1$ and $n \in \Z$.}$$
By construction, $\Theta$ is $\Z$-equivariant. We claim that $\Theta$ is measure preserving.

By construction,
$$\frac{d(n \cdot \zeta_i)}{d\zeta_i} = \frac{d(n \cdot \zeta)}{d \zeta} \circ \psi_i \; .$$
Therefore,
$$\frac{d(n \cdot \zeta_2)}{d\zeta_2} \circ \theta = \frac{d(n \cdot \zeta)}{d\zeta} \circ \psi_2 \circ \theta = \frac{d(n \cdot \zeta)}{d\zeta} \circ \psi_1 = \frac{d(n \cdot \zeta_1)}{d\zeta_1} \; .$$
Since $\theta$ is measure preserving, it then follows that the maps $z \mapsto n \cdot \theta( (-n) \cdot z)$ are measure preserving for all $n \in \Z$. Hence, $\Theta$ is measure preserving and the claim is proven. This concludes the proof of the proposition.
\end{proof}

\begin{example}\label{ex.some-isomorphism}
Let $\gamma \in (0,1)$. On the finite set $Y_1 = \{1,2,3\}$, we consider the probability measures $\nu_1(1)=1/2$, $\nu_1(2) = \nu_1(3) = 1/4$ and $\eta_1(1) = \gamma$, $\eta_1(2)=\eta_1(3)=(1-\gamma)/2$.

On the interval $Y_2 = [0,1]$, we consider the probability measures $\nu_2 \sim \eta_2$ where $\nu_2$ is the Lebesgue measure and
$$\frac{d\eta_2}{d\nu_2}(t) = \begin{cases} 2 \gamma &\;\;\text{if $0 \leq t \leq 1/2$,}\\ 2(1-\gamma) &\;\;\text{if $1/2<  t \leq 1$.}\end{cases}$$
For every countable group $\Lambda$, the associated nonsingular Bernoulli actions $\Gamma \actson (Y_i^\Gamma,\mu_i)$ of $\Gamma = \Z \ast \Lambda$ admit a measure preserving conjugacy, even though one base space is finite and the other base space is diffuse.
\end{example}


\begin{thebibliography}{Abcd12}\setlength{\itemsep}{-1mm} \setlength{\parsep}{0mm} \small
%

\bibitem[BKV19]{BKV19} M. Bj\"{o}rklund, Z. Kosloff and S. Vaes, Ergodicity and type of nonsingular Bernoulli actions. {\it Invent. Math.} {\bf 224} (2021), 573-625.

%

\bibitem[BO08]{BO08} N.P. Brown and N. Ozawa, C$^*$-algebras and finite-dimensional approximations. {\it Graduate Studies in Mathematics} {\bf 88}. American Mathematical Society, Providence, 2008.

\bibitem[CH08]{CH08} I. Chifan and C. Houdayer, Bass-Serre rigidity results in von Neumann algebras. {\it Duke Math. J.} {\bf 153} (2010), 23-54.

\bibitem[CI08]{CI08} I. Chifan and A. Ioana, Ergodic subequivalence relations induced by a Bernoulli action. {\it Geom. Funct. Anal.} {\bf 20} (2010), 53-67.

\bibitem[Con74]{Con74} A. Connes, Almost periodic states and factors of type III$_1$. {\it J. Funct. Anal.} {\bf 16} (1974), 415-445.



\bibitem[GM87]{GM87} S. Graf and R.D. Mauldin, A classification of disintegrations of measures. In {\it Measure and measurable dynamics (Rochester, NY, 1987)}, Contemp. Math. {\bf 94}, Amer. Math. Soc., Providence, 1989, pp.\ 147-158.

\bibitem[Ham81]{Ham81} T. Hamachi, On a Bernoulli shift with non-identical factor measures. {\it Ergodic Theory Dynam. Systems} \textbf{1} (1981), 273-284.

\bibitem[HIK20]{HIK20} K. Hasegawa, Y. Isono and T. Kanda, Boundary and rigidity of nonsingular Bernoulli actions.  {\it Comm. Math. Phys.} {\bf 389} (2022), 977-1008.

\bibitem[HI15]{HI15} C. Houdayer and Y. Isono, Unique prime factorization and bicentralizer problem for a class of type III factors. {\it Adv. Math.} {\bf 305} (2017), 402-455.


\bibitem[HSV16]{HSV16} C. Houdayer, D. Shlyakhtenko and S. Vaes, Classification of a family of non-almost-periodic free Araki-Woods factors. {\it J. Eur. Math. Soc.} {\bf 21} (2019), 3113-3142.

\bibitem[HV12]{HV12} C. Houdayer and S. Vaes, Type III factors with unique Cartan decomposition. {\it J. Math. Pures Appl.} {\bf 100} (2013), 564-590.

\bibitem[Ioa06]{Ioa06} A. Ioana, Rigidity results for wreath product II$_1$ factors. {\it J. Funct. Anal.} {\bf 252} (2007), 763-791.


\bibitem[Ioa10]{Ioa10} A. Ioana, W$^*$-superrigidity for Bernoulli actions of property (T) groups. {\it J. Amer. Math. Soc.} {\bf 24} (2011), 1175-1226.

\bibitem[IPV10]{IPV10} A. Ioana, S. Popa and S. Vaes, A class of superrigid group von Neumann algebras. {\it Ann. of Math.} {\bf 178} (2013), 231-286.


\bibitem[Kos09]{Kos09} Z. Kosloff, On a type III$_1$ Bernoulli shift. {\it Ergodic Theory Dynam. Systems} \textbf{31} (2011), 1727-1743.

\bibitem[LP97]{LP97} M. Lema\'nczyk and F. Parreau, On the disjointness problem for Gaussian automorphisms. {\it Proc. Amer. Math. Soc.} {\bf 127} (1999), 2073-2081.

\bibitem[Mah83]{Mah83} D. Maharam, On the planar representation of a measurable subfield. In {\it Measure theory, Oberwolfach 1983}, Lecture Notes in Math. {\bf 1089}, Springer, Berlin, 1984, pp.\ 47-57.

\bibitem[Mar16]{Mar16} A. Marrakchi, Solidity of type III Bernoulli crossed products. {\it Comm. Math. Phys.} {\bf 350} (2017), 897-916.


\bibitem[Oza03]{Oza03} N. Ozawa, Solid von Neumann algebras. {\it Acta Math.} {\bf 192} (2004), 111-117.

\bibitem[Oza04]{Oza04} N. Ozawa, A Kurosh-type theorem for type II$_1$ factors. {\it Int. Math. Res. Not.} (2006), art.\ id.\ 97560, 21 pp.

\bibitem[OP03]{OP03} N. Ozawa and S. Popa, Some prime factorization results for type II$_1$ factors. {\it Invent. Math.} {\bf 156} (2004), 223-234.

\bibitem[Pop03]{Pop03} S. Popa, Strong rigidity of II$_1$ factors arising from malleable actions of w-rigid groups, I. {\it Invent. math.} {\bf 165} (2006), 369-408.

\bibitem[Pop04]{Pop04} S. Popa, Strong rigidity of II$_1$ factors arising from malleable actions of w-rigid groups II. {\it Invent. math.} {\bf 165} (2006), 409-451.

\bibitem[Pop06]{Pop06} S. Popa, On the superrigidity of malleable actions with spectral gap. {\it J. Amer. Math. Soc.} {\bf 21} (2008), 981-1000.


\bibitem[PV21]{PV21} S. Popa and S. Vaes, W$^*$-rigidity paradigms for embeddings of II$_1$ factors. {\it Preprint.} \href{https://arxiv.org/abs/2102.01664}{arXiv:2102.01664}

\bibitem[Rud62]{Rud62} W. Rudin, Fourier analysis on groups. {\it Interscience Tracts in Pure Appl. Math.} {\bf 12}, Wiley, New York, 1962.

%
%
%

\bibitem[Vae07]{Vae07} S. Vaes, Explicit computations of all finite index bimodules for a family of II$_1$ factors. {\it Ann. Sci. \'{E}c. Norm. Sup\'{e}r.} {\bf 41} (2008), 743-788.


\bibitem[VV05]{VV05} S. Vaes and R. Vergnioux, The boundary of universal discrete quantum groups, exactness, and factoriality. {\it Duke Math. J.} {\bf 140} (2007), 35-84.

\bibitem[VW17]{VW17} S. Vaes and J. Wahl, Bernoulli actions of type {III}$_1$ and $L^2$-cohomology. {\it Geom. Funct. Anal.} {\bf 28} (2018), 518-562.
\end{thebibliography}
\end{document}